\title{Knotted families from graspers}
\author{Danica Kosanovi\'c}
\address{ETH Z\"urich, Department of Mathematics, R\"amistrasse 101, 8092 Z\"urich, Switzerland}
\email{danica.kosanovic@math.ethz.ch}
\date{March 14, 2024}
\begin{document}

\begin{abstract}
    For any smooth manifold $M$ of dimension $d\geq4$ we construct explicit classes in homotopy groups of spaces of embeddings of either an arc or a circle into $M$, in every degree that is a multiple of $d-3$, and show that they are detected in the Taylor tower of Goodwillie and Weiss. The classes are obtained from families of string links constructed in the $d$-ball.
\end{abstract}

\maketitle



\section{Introduction}\label{sec:intro}

Given smooth manifolds $C$ and $M$ let $K\colon C\hra M$ be a neat smooth embedding. This means that $K$ is a smooth injective map with injective derivative at each point, $K(C)$ intersects $\partial M$ transversely and precisely in $K(\partial C)$, and we assume that the boundary condition $K|_{\partial C}$ is fixed. 

Let $\D^k$ denote the unit ball in $\R^k$. The trefoil knot $T\colon\D^1\hra\D^3$ can be obtained from the standard embedding $\u\colon\D^1\hra\D^3$, called the unknot, by the following operation. Consider the meridians $m_1,m_2\colon\S^1\hra\D^3\sm\u(\D^1)$ to $\u$ at two different points, then replace a subarc of $\u$ by the arc describing the commutator $[m_1,m_2]\coloneqq m_1m_2m_1^{-1}m_2^{-1}$, as on the left of Figure~\ref{fig:Borr}. Equivalently, pick three subarcs of $\u$ and connect sum them into the Borromean rings, see the right hand side of Figure~\ref{fig:Borr}. This operation is called the Gusarov--Habiro \emph{clasper surgery} on $\u$ of degree $n=2$ \cite{Gusarov-main,Gusarov,Habiro}. In general, a surgery of degree $n\geq1$ uses $n-1$ copies of the Borromean rings and $n+1$ subarcs of $\u$, and follows a shape of a planar rooted binary tree with $n$ leaves. For $n=1$ clasper surgery is simply a crossing change: ambiently connect sum a subarc of $\u$ into a meridian of $\u$.

\begin{figure}[!htbp]
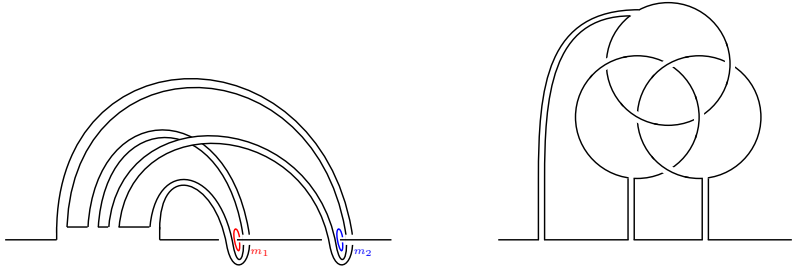

    \centering
    \includestandalone[mode=buildmissing,width=0.7\linewidth]{fig-knotted/fig-Borr}
    \caption{Two isotopic knots, obtained by degree $2$ clasper surgery on the unknot $\u$.}
    \label{fig:Borr}
\end{figure}
In other dimensions certain analogues of clasper surgery have been considered. For example, for any $k\geq2$ one can start with the standard embedding $\u\colon\D^{4k-1}\hra\D^{6k}$, pick two meridians $m_1,m_2\colon\S^{2k}\hra\D^{6k}\sm\u(\D^{4k-1})$, defined as the boundaries of the normal disks at two points of $\u(\D^{4k-1})$, and consider their Whitehead product $[m_1,m_2]\colon\S^{4k-1}\to\S^{2k}\vee\S^{2k}\to\D^{6k}\sm\u(\D^{4k-1})$. This is defined by postcomposing with $m_1\vee m_2$ the universal Whitehead product $\S^{4k-1}\to\S^{2k}\vee\S^{2k}$, which is the attaching map of the top $4k$-cell in $\S^{2k}\times\S^{2k}$. It turns out that $[m_1,m_2]$ is homotopic to an embedding (see~\eqref{eq:Haefliger-foliate} and Section~\ref{subsec:Haefliger}):
\begin{equation}\label{eq:Haefliger}
    [m_1,m_2]_{emb}\colon\S^{4k-1}\hra\D^{6k}\sm\u(\D^{4k-1}).
\end{equation}
Thus, we can tube $\u$ into the sphere $[m_1,m_2]_{emb}$ (i.e.\ ambiently connect sum them), to obtain a new embedding $T\colon\D^{4k-1}\hra\D^{6k}$. This is precisely the trefoil of Haefliger~\cite{Haefliger}, which generates the group of isotopy classes $\pi_0\Embp(\D^{4k-1},\D^{6k})\cong\Z$, as he showed in \cite{Haefliger2} (note that since codimension is $2k+1>2$ by Zeeman~\cite{Zeeman} this group is trivial in the $\mathsf{PL}$ and $\mathsf{Top}$ categories!). Other analogues of clasper surgery appear in Koschorke's work in the context of link maps, see for example~\cite{Koschorke-arbitrary}, and more recently, in the construction of Watanabe~\cite{Watanabe} of nontrivial classes in homotopy groups of the space of diffeomorphisms of a disk of any dimension. See Section~\ref{subsec:related-work}.

\subsection{The main result}
In this paper we generalise clasper surgery in another way: for embeddings of a $1$-dimensional source $C$ into a smooth manifold $M$ of dimension $d\geq4$, with a fixed boundary condition. However, instead of studying as above the set of isotopy classes of embeddings, which is the set $\pi_0\Embp(C,M)$ of path components, we study \emph{higher homotopy groups} $\pi_k\Embp(C,M)$, $k\geq1$, where $\u\colon C\hra M$ is an arbitrary basepoint omitted from the notation. Thus, a nontrivial ``knot'' is now a nonzero homotopy class, represented by a $k$-parameter family of embeddings of $C$ into $M$ which cannot be trivialised through such families. In other words, all embeddings are isotopic to $\u$, but the family cannot be homotoped to the family $\const_\u$ constantly equal to $\u$.

In this setting a meridian of $\u$ (the boundary of a normal disk) is a sphere of dimension $d-2$, so the Whitehead product of $n$ meridians is a sphere of dimension $n(d-3)+1$ (we will actually use the equivalent Samelson product, see Section~\ref{subsec:Samelson}). We can foliate this sphere by embedded circles, and then ambiently connect sum a fixed subarc of $\u$ into each of these circles, to obtain an $n(d-3)$-parameter family of embeddings of $C$, so an element of $\pi_{n(d-3)}\Embp(C,M)$. We make this idea precise in the form of \emph{grasper surgery}, which generalises clasper surgery to all embedding spaces with $1$-dimensional source.

In particular, for grasper surgery of degree $n=1$ we connect sum $\u$ with the circles foliating its meridian. Note that we have to pick an arc guiding a subarc of $\u$ into the vicinity of the meridian, and this choice is up to isotopy determined by a fundamental group element $g\in\pi_1M$, since $d\geq4$. There is a crossing change homotopy from the resulting family back to $\const_\u$ through immersions, by foliating the meridian disk. This defines a realisation map into the relative homotopy group:
\begin{equation}\label{eq:realmap1}
    \realmap_1\colon \Z[\pi_1M]\ra
    \pi_{d-2}\big(\Imm_\partial(C,M),\Embp(C,M)\big).
\end{equation}
This map is an isomorphism for $C$ connected (that is, $C=\D^1$ or $C=\S^1$), as we elaborate in the next paragraph. By general position such relative homotopy groups vanish in degrees $k<d-2$, implying that $\pi_k\Embp(C,M)\cong \pi_k\Imm(C,M)$ for $k<d-3$. Thus, $\pi_{d-3}$ is the lowest homotopy group that can distinguish embeddings from immersions, and the difference between them (i.e.\ the kernel of the surjection $\pi_{d-3}\Embp(C,M)\sra \pi_{d-3}\Imm(C,M))$ is a quotient of $\Z[\pi_1M]$.

Indeed, the inverse of $\realmap_1$ is an explicit map $\Dax$ coming from the work of Dax \cite{Dax}, as shown in \cite{KT-highd}, and in \cite{Gabai-disks} for $d=4$, and in \cite{K-Dax} for $C=\S^1$. Equivalently -- and relevantly for this paper, $\Dax=\evrel_2$ is the map to the second layer of the Taylor tower constructed by Goodwillie and Weiss~\cite{Weiss}. This is a tower of spaces $\cdots\to T_{n+1}(C,M)\to T_n(C,M)\to\cdots$ with maps
\[
    \ev_n\colon \Embp(C,M)\ra T_n(C,M),
    \quad n\geq1,
\]
so that on relative homotopy groups one has the induced map
\[
    \evrel_{n+1}\colon\pi_k\big(T_n(C,M),\Embp(C,M)\big)\ra \pi_k\big(T_n(C,M),T_{n+1}(C,M)\big).
\]
The latter group is trivial for $1\leq k\leq n(d-3)$ by \cite{Weiss,GW}, and in \cite{K-thesis-paper} we defined 
\begin{equation}\label{eq:Lie}
    \pi_{n(d-3)+1}\big(T_n(C,M),T_{n+1}(C,M)\big)\overset{\cong}{\ra}\Lie_{\pi_1M}(n),
\end{equation}
an explicit isomorphism with the group $\Lie_{\pi_1M}(n)$, the quotient by the antisymmetry $(\AS)$ and Jacobi relations $(\IHX)$ of the free abelian group $\Z[\Tree_{\pi_1M}(n)]$ generated by the set $\Tree_{\pi_1M}(n)$ of planar rooted trivalent trees with $n$ leaves decorated by elements of $\pi_1M$ (see Section~\ref{subsec:trees}).

\begin{thm}\label{thm:main}
    Let $C$ be equal to $\D^1$ or $\S^1$ and $M$ be any compact smooth manifold with boundary, $\dim M=d\geq4$. For any $n\geq1$ there is an explicit homomorphism
    \[
        \realmap_n\colon\Lie_{\pi_1M}(n)\ra\pi_{n(d-3)+1}\big(T_n(C,M),\Embp(C,M)\big)
    \]
    of abelian groups, given by ``grasper surgery of degree $n$'', such that 
    \[
    \evrel_{n+1}\circ\realmap_n=\Id_{\Lie_{\pi_1M}(n)}.
    \]
\end{thm}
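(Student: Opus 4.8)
The plan is to construct the map $\realmap_n$ directly by "grasper surgery" and then verify that $\evrel_{n+1}$ inverts it on the level of $\Lie$. The key is that the right-hand side, $\pi_{n(d-3)+1}(T_n,\Emb_\partial)$, fits into the long exact sequence of the pair, and by the earlier discussion the relative groups $\pi_k(T_n, T_{n+1})$ vanish below degree $n(d-3)+1$, where they are identified with $\Lie_{\pi_1M}(n)$ via \eqref{eq:Lie}. So it suffices to produce, for each decorated trivalent tree $t\in\Tree_{\pi_1M}(n)$, an explicit element $\realmap_n(t)$ of the relative homotopy group on the left, whose image under $\evrel_{n+1}$ is the generator $[t]\in\Lie_{\pi_1M}(n)$, and then check that the relations $(\AS)$ and $(\IHX)$ are respected so that $\realmap_n$ descends to $\Lie_{\pi_1M}(n)$.

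First I would build the geometric family. Starting from the basepoint $\u\colon C\hra M$, take $n$ meridian spheres $m_1,\dots,m_n\colon\S^{d-2}\hra M\sm\u(C)$ based at $n$ chosen points along $\u$, each pushed off along an arc encoding a decoration $g_i\in\pi_1 M$; form the iterated Samelson product in $\Omega(M\sm\u(C))$ according to the shape of the planar rooted trivalent tree $t$, obtaining a sphere of dimension $n(d-3)+1$ mapping into $M\sm\u(C)$. As in the $n=1$ case sketched in the introduction, this sphere is homotopic — indeed isotopic, after a Haefliger-type argument (Section~\ref{subsec:Haefliger}) — to a family of embedded circles foliating it, so that ambiently connect-summing a fixed subarc of $\u$ into each of these circles produces an $n(d-3)$-parameter family of embeddings of $C$; null-homotoping the Samelson product sphere by foliating the relevant meridian disk gives a crossing-change homotopy through immersions, exhibiting this family as an element of the relative group $\pi_{n(d-3)+1}(\Imm_\partial,\Emb_\partial)$, hence (via $\ev_n$) of $\pi_{n(d-3)+1}(T_n,\Emb_\partial)$. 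This is the map $\realmap_n$; linearity in the decorations and the plausibility of it being well-defined on $\Z[\Tree_{\pi_1M}(n)]$ should be reasonably formal.

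The substantive step is the computation $\evrel_{n+1}\circ\realmap_n=\Id$, i.e.\ identifying the image of grasper surgery in the $(n+1)$-st layer. Here I would use the explicit identification \eqref{eq:Lie} of $\pi_{n(d-3)+1}(T_n,T_{n+1})$ with $\Lie_{\pi_1M}(n)$ from \cite{K-thesis-paper}, together with the description of $\evrel_{n+1}$ as a "configuration space integral"/combinatorial evaluation counting how the family's trajectories hit the $(n+1)$-st cross-effect. Grasper surgery of degree $n$ was designed precisely so that, to leading order, the only contribution to the $(n+1)$-st layer is the single tree $t$ with its decorations — the Borromean-rings/Samelson-product building blocks are "trivial below the tree complexity $n$", so there is nothing in lower layers to worry about (consistent with $\pi_k(T_n,T_{n+1})=0$ for $k\le n(d-3)$), and in degree $n(d-3)+1$ only $t$ survives. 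I expect the bookkeeping to factor through known facts: the Samelson/Whitehead product of meridians corresponds under the Taylor-tower identification to bracketing of tree-generators, matching the Lie-algebra structure on $\bigoplus_n\Lie_{\pi_1M}(n)$. The $(\AS)$ and $(\IHX)$ relations should then hold on the nose because they already hold in the target $\Lie_{\pi_1M}(n)$, so $\realmap_n$ descends; alternatively one verifies them geometrically (antisymmetry from swapping the two branches at a trivalent vertex via a rotation, $\IHX$ from the Jacobi identity for Samelson products / an isotopy of the guiding arcs).

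The main obstacle, I expect, is the last computation: showing that grasper surgery hits \emph{exactly} the generator $[t]$ with coefficient $1$ and no correction terms — i.e.\ controlling the interaction between the $n-1$ Borromean-rings insertions and all the subarcs, and checking that the "evaluation map to the $(n+1)$-st layer" sees precisely one decorated trivalent tree. This requires a careful analysis of the geometry of the Samelson-product sphere relative to the evaluation/cross-effect construction of Goodwillie--Weiss, and matching conventions with the isomorphism \eqref{eq:Lie} of \cite{K-thesis-paper}; the degree and dimension count ($n(d-3)+1$ is exactly the connectivity threshold) is what makes this a clean "leading term" statement, but getting the normalisation right — including signs in $(\AS)$ and the precise decorations read off from the guiding arcs — is where the real work lies.
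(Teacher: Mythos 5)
There is a genuine gap at the point where you pass from the geometric family to the relative homotopy group. You exhibit the grasper-surgery family as an element of $\pi_{n(d-3)+1}(\Imm_\partial(C,M),\Embp(C,M))$ via a crossing-change nullhomotopy through immersions, and then say ``hence (via $\ev_n$) of $\pi_{n(d-3)+1}(T_n,\Embp)$''. This step does not exist: $\Imm_\partial = T_1$, and the tower maps run $T_n\to T_1$, so relative classes map $\pi_*(T_n,\Embp)\to\pi_*(\Imm_\partial,\Embp)$ and not the other way. To get a class in $\pi_{n(d-3)+1}(T_n,\Embp)$ you need a nullhomotopy of $\ev_n$ of the family \emph{inside $T_n$}, which is much stronger than a nullhomotopy through immersions and is exactly where the paper's extra data enters: besides the family $\mu_{\Gamma,d}$ of embedded circles in the ball, one constructs the multi-family $\mu^\star_{\Gamma,d}\colon\I^n\times\S^{n(d-3)}\to\Embp(\D^1,\ball^d)$ (Theorem~\ref{thm:multi-family}), whose precise intersection pattern with the arcs $a_i$ is what produces, via the punctured-knots model $\rpT_n(M)$, the trivialisation of $\ev_n\realmap_n(\Gamma^{g_{\ul n}})$ in $T_n$ (Theorem~\ref{thm:grasper-gives-path}). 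Relatedly, your construction of the embedded family itself — ``homotopic, indeed isotopic, after a Haefliger-type argument, to a family of embedded circles'' — is not an argument; making the iterated Samelson product of meridians into a genuinely embedded $\S^{n(d-3)}$-family with controlled intersections is the content of the plumbing/embedded-commutator construction of Section~\ref{sec:multi-family}, and the controlled intersections are indispensable for the $T_n$-trivialisation just described.

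The second gap is the identity $\evrel_{n+1}\circ\realmap_n=\Id$ itself, which you correctly flag as the substantive step but then describe via an unavailable tool: $\evrel_{n+1}$ is not computed in the paper (or in \cite{K-thesis-paper}) as a configuration-space-integral style count; the identification \eqref{eq:Lie} is the explicit isomorphism $W(\chi\circ\deriv)_*$ of Theorem~\ref{thm:thesis}, and the computation is done by comparing $\mu^\star_{\Gamma,d}$ with the Samelson product $x^\star_{\Gamma,d-1}$ (Theorem~\ref{thm:analogues}) and matching the $\glueOp$-glued dragging reflections of the cube $\deriv\,\u^\star_{\TG,\Gamma^{g_{\ul n}}}$ against the generators \eqref{eq:final-gens}. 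Without some version of this comparison the ``leading term'' claim has no proof. Finally, your argument that $(\AS)$ and $(\IHX)$ ``hold on the nose because they already hold in the target'' is circular as stated: the target of $\realmap_n$ is $\pi_{n(d-3)+1}(T_n,\Embp)$, not $\Lie_{\pi_1M}(n)$; one only learns that the relations die after applying $\evrel_{n+1}$, and to conclude they die already in $\pi_{n(d-3)+1}(T_n,\Embp)$ you must invoke injectivity of $\evrel_{n+1}$, i.e.\ the Convergence Theorem (as the paper does), or give the geometric argument of Remark~\ref{rem:graded-signs}.
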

A \emph{grasper} of degree $n$ is simply an embedding $\TG_n\colon\ball^d\hra M$, which is disjoint from $\partial M$, and which intersects the basepoint knot $\u$ only in fixed intervals
\begin{equation}\label{eq:G-condition}
    \TG_n(\ball^d)\cap\u(C)=\u(J_i)=\TG_n(a_i)
\end{equation}
for fixed arcs $a_i\colon\D^1\hra\ball^d$ with $0\leq i\leq n$, see Figure~\ref{fig:grasper}. Grasper surgery is the family of embeddings obtained by replacing the subarc $\u(J_0)=\TG(a_0)$ with $\TG_n(\mu_{\Gamma,d}(\vec{\tau}))$, for $\vec{\tau}\in\S^{n(d-3)}$ and a fixed family $\mu_{\Gamma,d}\colon\S^{n(d-3)}\to\Embp(\D^1,\ball^d\sm\bigsqcup_{i=1}^na_i)$, see \eqref{eq-intro:mu} and Corollary~\ref{cor:string-links}. In introducing these notions we are inspired by \emph{gropes} of Conant and Teichner~\cite{CT1} and \emph{claspers} of Gusarov and Habiro \cite{Gusarov,Habiro}. Both are closely related to iterated commutators in the fundamental group of the complement of a knot (its lower central series). A grope contains a clasper as a subset, but also carries additional data. For graspers this data is $\mu^\star_{\Gamma,d}$ from~\eqref{eq-intro:mu-star}, and is used for the trivialisation in $\pi_{n(d-3)}T_n(C,M)$.

\begin{figure}[!htbp]
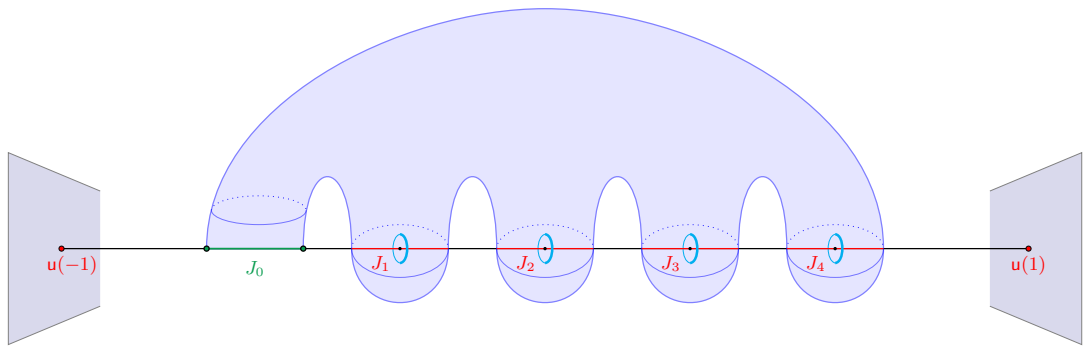

    \centering
    \includestandalone[mode=buildmissing,width=0.95\linewidth]{fig-knotted/fig-grasper}
    \caption{A grasper in a $d$-manifold $M$ (here $d=3$) relative to $\u$ (the horizontal line) is a $d$-ball that ``grasps'' $\u$ in fixed subintervals $J_i$. The family $\mu_{\Gamma,d}$ has a particular linking pattern with the arcs $a_i$, that under $\TG_n$ get identified with $\u(J_i)$.}
    \label{fig:grasper}
\end{figure}
Let us give a few comments, before turning to applications and related results.
\begin{rem}
    The essence of the theorem is the compatibility of a priori very different $n$-fold brackets: on one hand, the generators of \eqref{eq:Lie} come from $n$-fold Whitehead brackets of $(d-1)$-spheres (of configurations in $\Conf(M)$ in which one point goes around another one locally), whereas degree $n$ graspers involve certain ``embedded commutators'' of $(d-2)$-spheres (which are meridians of $\u$, see Figure~\ref{fig:grasper} for $d=3$ and Figure~\ref{fig:realmap-2} for $d=4$). Thus, the proof calls for a thorough geometric understanding of the evaluation map $\ev_{n+1}$, which we achieve using the reduced punctured knots model $\rpT_n(M)$ of $T_n(\D^1,M)$ developed in \cite{KT-rpn}. The two viewpoints fit together via a sort of suspension operation (see Proposition~\ref{prop:suspensions} and  Theorem~\ref{thm:analogues}).

   The results of this paper hold for $d=3$ with a slight modification, but that case was already studied in \cite{K-thesis-paper} (where the proof for $d\geq4$ was hinted in Remark~1.10). The proof here appears easier for several reasons. Firstly, we rely on the key Theorem~\ref{thm:thesis} from \cite{K-thesis-paper}, describing the homotopy type of the layers. Secondly, the reduced punctured knots model from \cite{KT-rpn} makes formulae shorter. Thirdly, Theorem~\ref{thm:analogues} is a new result that makes the main proof in Section~\ref{subsec:main-proof-arc} neat and concise.
\end{rem}

\begin{rem}\label{rem:generalise-all-degrees}
    The reader might wonder why we only have degrees that are multiples of $d-3$ and if it is possible to define classes in homotopy groups of all degrees. This stems from the mentioned connectivity of the fibre of $T_{n+1}(C,M)\to T_n(C,M)$, and the easy description of its lowest nonvanishing homotopy group as the group of Lie trees. Nevertheless, one can also express its higher homotopy groups combinatorially in terms of $M$ (cf.\ Theorem~\ref{thm:thesis}). Therefore, we strongly believe that \emph{all classes} in homotopy groups of $\Embp(C,M)$ can be obtained by analogues of grasper surgery. We plan to pursue this in future work, see Question~\ref{quest:real-map-all-deg}.
\end{rem}

\begin{rem}\label{rem:generalise-nonconnected}
    The assumption in Theorem~\ref{thm:main} that $C$ is connected is inessential, but convenient (the Taylor tower has been less studied for nonconnected cases). The group $\ker(\p_{n+1})$ should have analogous description, where $\pi_1M$-decorated trees additionally have leaves partitioned into $c=|\pi_0C|$ subsets, and this has a clear analogue for graspers relative to $\u\colon C\hra M$, see Remark~\ref{rem:nonconn-graspers}.
\end{rem}

\subsection{Consequences}

    By the Goodwillie--Klein--Weiss Convergence Theorem~\cite{GKW,GKmultiple}, the map $\evrel_{n+1}$ is an isomorphism for $1\leq k\leq (n+1)(d-3)$, see Theorem~\ref{thm:rpn} below.
    However, in the proof of Theorem~\ref{thm:main} we do not use that result until the very end, so it independently shows the surjectivity of $\evrel_{n+1}$, and thus of $\pi_{n(d-3)}\ev_{n+1}$ by induction (see also Questions~\ref{quest:real-map-all-deg} and \ref{quest:reprove-GKW}). We do use that result to conclude that $\realmap_n$ vanishes on the relations $(\AS)$ and $(\IHX)$, but one could give a geometric argument instead, see Remark~\ref{rem:graded-signs}.

    On the other hand, the Convergence Theorem implies the following.

\begin{cor}\label{cor:main}
Let $C$ be equal to $\D^1$ or $\S^1$ and $M$ be any compact smooth manifold with boundary, $\dim M=d\geq4$.
    For every linear combination $F$ of trees from the set $\Tree_{\pi_1M}(n)$ there is a family $\realmap_n(F)\colon\S^{n(d-3)}\to\Embp(C,M)$, which is homotopic to $\const_\u$ if and only if $F$ belongs to the linear combination of $(\AS)$ and $(\IHX)$ relations and the image of $\delta_n\colon\pi_{n(d-3)+1}T_n(C,M)\to\Lie_{\pi_1M}(n)$.
    Moreover, the nontriviality of $\realmap_n(F)$ is first detected in $\pi_{n(d-3)}T_k(C,M)$ for $k=n+1$. 
\end{cor}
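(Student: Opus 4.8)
The plan is to deduce Corollary~\ref{cor:main} formally from Theorem~\ref{thm:main}, the Convergence Theorem (Theorem~\ref{thm:rpn}), and the connectivity of the layers of the Goodwillie--Weiss tower, via a diagram chase in long exact sequences of pairs.

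First I would fix the meaning of $\realmap_n(F)$: for $F\in\Z[\Tree_{\pi_1M}(n)]$ with image $[F]\in\Lie_{\pi_1M}(n)$ under the quotient $q\colon\Z[\Tree_{\pi_1M}(n)]\sra\Lie_{\pi_1M}(n)$ (whose kernel is by definition spanned by the $(\AS)$ and $(\IHX)$ relations), I take $\realmap_n(F)$ to be a family $\S^{n(d-3)}\to\Embp(C,M)$ representing the class $\partial\,\realmap_n([F])\in\pi_{n(d-3)}\Embp(C,M)$, where $\realmap_n$ is the homomorphism of Theorem~\ref{thm:main} and $\partial$ is the connecting map of the pair $\big(T_n(C,M),\Embp(C,M)\big)$; concretely this is a concatenation of the degree~$n$ grasper surgeries of the decorated trees occurring in $F$. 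Next I would upgrade the split injection $\realmap_n$ to an isomorphism. Writing $\overline{\evrel}_{n+1}$ for $\evrel_{n+1}$ followed by the isomorphism~\eqref{eq:Lie}, Theorem~\ref{thm:main} reads $\overline{\evrel}_{n+1}\circ\realmap_n=\Id$; and since $d\geq4$ gives $n(d-3)+1\leq(n+1)(d-3)$, the Convergence Theorem makes $\evrel_{n+1}$, hence $\overline{\evrel}_{n+1}$, an isomorphism in degree $n(d-3)+1$. Thus $\realmap_n=\overline{\evrel}_{n+1}^{-1}$ is an isomorphism, and the homomorphism $\delta_n$ of the statement equals $\overline{\evrel}_{n+1}\circ j$, where $j\colon\pi_{n(d-3)+1}T_n(C,M)\to\pi_{n(d-3)+1}\big(T_n(C,M),\Embp(C,M)\big)$ is the natural map (this is compatibility of the long exact sequences of $\big(T_n(C,M),\Embp(C,M)\big)$ and $\big(T_n(C,M),T_{n+1}(C,M)\big)$).

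Then the first assertion follows by chasing
\[
    \pi_{n(d-3)+1}T_n(C,M)\xrightarrow{j}\pi_{n(d-3)+1}\big(T_n(C,M),\Embp(C,M)\big)\xrightarrow{\partial}\pi_{n(d-3)}\Embp(C,M):
\]
exactness gives $\ker\partial=\operatorname{im}j$, so the class $\realmap_n(F)=\partial\,\realmap_n([F])$ vanishes, equivalently the family is homotopic to $\const_\u$, iff $\realmap_n([F])\in\operatorname{im}j$; applying $\overline{\evrel}_{n+1}=\realmap_n^{-1}$ and using $\delta_n=\overline{\evrel}_{n+1}\circ j$, this becomes $[F]\in\operatorname{im}\delta_n$, which---pulling back along $q$---says exactly that $F$ lies in the subgroup of $\Z[\Tree_{\pi_1M}(n)]$ spanned by the $(\AS)$ and $(\IHX)$ relations together with lifts of generators of $\operatorname{im}\delta_n$. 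For the detection statement: if $k\leq n$ then $\ev_k$ factors through $\ev_n$ and $(\ev_n)_*\circ\partial=0$ by exactness, so $(\ev_k)_*\realmap_n(F)=0$ in $\pi_{n(d-3)}T_k(C,M)$; if $k=n+1$ then, since $n(d-3)+1\leq(n+1)(d-3)\leq(n+2)(d-3)$ for $d\geq4$, the Convergence Theorem applied with $n+1$ in place of $n$ identifies $\pi_{n(d-3)+1}\big(T_{n+1}(C,M),\Embp(C,M)\big)$ with $\pi_{n(d-3)+1}\big(T_{n+1}(C,M),T_{n+2}(C,M)\big)$, which vanishes by~\cite{Weiss,GW}, so the long exact sequence of $\big(T_{n+1}(C,M),\Embp(C,M)\big)$ forces $(\ev_{n+1})_*\colon\pi_{n(d-3)}\Embp(C,M)\hookrightarrow\pi_{n(d-3)}T_{n+1}(C,M)$ to be injective. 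Hence a nontrivial $\realmap_n(F)$ survives to $\pi_{n(d-3)}T_{n+1}(C,M)$ while vanishing in $\pi_{n(d-3)}T_k(C,M)$ for all $k\leq n$, i.e.\ its nontriviality is first detected at $k=n+1$.

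Granting Theorem~\ref{thm:main} and the connectivity of the layers, this is a formal diagram chase and I do not expect a genuine obstacle; the single delicate point is upgrading the \emph{split} injection $\realmap_n$ to an honest isomorphism, which is exactly where the Convergence Theorem enters and where $d\geq4$ is used in the degree estimates. All the geometric substance sits in Theorem~\ref{thm:main}.
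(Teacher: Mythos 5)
Your argument is correct and is essentially the paper's own proof: both deduce the corollary formally from Theorem~\ref{thm:main} together with the Convergence Theorem~\ref{thm:rpn} and the identification of the layer, the paper chasing a ladder of absolute groups in degree $n(d-3)$ (identifying $\ker\ev_n\cong\Lie_{\pi_1M}(n)/\im\delta_n$ via the isomorphism $\pi_{n(d-3)}\ev_{n+1}$), while you chase the long exact sequence of the pair $\big(T_n(C,M),\Embp(C,M)\big)$ in relative degree $n(d-3)+1$ after upgrading the split injection $\realmap_n$ to an isomorphism. The only point to watch is that for $d=4$ your degree $n(d-3)+1$ equals $(n+1)(d-3)$, which sits at the boundary of the range stated in Theorem~\ref{thm:rpn}, so making $\evrel_{n+1}$ an isomorphism there also uses the surjectivity half of the Goodwillie--Klein connectivity estimate --- exactly the form of the statement the paper itself invokes in the introduction and at the end of Section~\ref{subsec:main-proof-arc}.
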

The homomorphism $\delta_n$ comes from the natural map from the absolute to the relative homotopy group, and its image can be identified with the union of images of the differentials in the spectral sequence computing the homotopy groups of the tower of fibrations given by the canonical maps $T_{k+1}(C,M)\to T_k(C,M)$, $k\geq1$. This spectral sequence has been studied extensively in the cases $C=\D^1$, $M=\D^d$ for $d\geq4$; for example, the image of $\delta_n$ has been computed rationally. In Section~\ref{subsec:related-work} below we review those results. Combining them with Theorem~\ref{thm:main} gives the following.

\begin{cor}\label{cor:Dd}
    For any $d\geq4$ and $n\geq1$ the grasper classes generate a subgroup
    \[\begin{tikzcd}
        {\mathcal{A}^{\mathsf{T,odd/even}}_n\otimes\Q}\rar[hook ]{\realmap_n} & \pi_{n(d-3)}\Embp(\D^1,\D^d)\otimes\Q,
    \end{tikzcd}
    \]
    that realises all classes in the lower vanishing line of the Goodwillie--Weiss--Sinha spectral sequence.
    For $n\leq 6$ these inclusions are true without tensoring with $\Q$, and the abelian groups $\mathcal{A}^{\mathsf{T,odd/even}}_n$ are free of the following ranks:
\begin{center}\renewcommand{\arraystretch}{1.1}
\begin{tabular}{c|cccccccccccc}
    $n$& 1 &2 &3 &4 &5 &6 \\\hline
    $\mathrm{rank}(\ATodd_n)$ &$0$ &$1$ &$1$ &$2$ &$3$ &$5$\\
    $\mathrm{rank}(\ATeven_n)$ &$0$ &$1$ &$1$ &$0$ &$2$ &$1$ 
\end{tabular}
\end{center}
    For $n=2$ this is actually an isomorphism, realising the lowest nonvanishing homotopy group $\pi_{2(d-3)}\Embp(\D^1,\D^d)$ as $ \Z\;\cong\faktor{\Z[\Tree(2)]}{\AS}\cong\Lie(2)\cong\mathcal{A}^{\mathsf{T,odd/even}}_2$.
\end{cor}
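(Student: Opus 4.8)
The plan is to derive this from Theorem~\ref{thm:main} and Corollary~\ref{cor:main} specialised to $M=\D^d$, combined with the (rational) computations of the lower line of the Goodwillie--Weiss--Sinha spectral sequence reviewed in Section~\ref{subsec:related-work}. Since $\pi_1\D^d$ is trivial we have $\Tree_{\pi_1\D^d}(n)=\Tree(n)$ and $\Lie_{\pi_1\D^d}(n)=\Lie(n)$, the multilinear part of the free Lie algebra, which is free abelian of rank $(n-1)!$ with $\Z[\Tree(n)]/(\AS,\IHX)\cong\Lie(n)$. First I would apply Corollary~\ref{cor:main} with $M=\D^d$: it says that $F\mapsto\realmap_n(F)$ descends to an \emph{integral} injection
\[
    \Lie(n)/\mathrm{im}(\delta_n)\;\hookrightarrow\;\pi_{n(d-3)}\Embp(\D^1,\D^d),
\]
and that a nonzero class in the image vanishes in $T_n(\D^1,\D^d)$ but not in $T_{n+1}(\D^1,\D^d)$; equivalently, after the identification of $\mathrm{im}(\delta_n)$ with the union of images of the spectral-sequence differentials, this image is exactly the entry of the $E^\infty$-page lying on the lower vanishing line in the relevant filtration. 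Thus the grasper classes realise all classes on that line. (If one prefers to avoid citing Corollary~\ref{cor:main}, one can instead note that $\evrel_{n+1}\circ\realmap_n=\Id$ and the Convergence Theorem make $\realmap_n$ an isomorphism onto $\pi_{n(d-3)+1}(T_n(\D^1,\D^d),\Embp(\D^1,\D^d))$, and then run the long exact sequence of the pair directly.)

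Next I would identify the source. Rationally, the entry of the $E^\infty$-page on the lower vanishing line in weight $n$ is the top-degree, weight-$n$ homology of the subcomplex of the $E^1$-page spanned by trivalent trees (``the tree complex''): its underlying group is $\Lie(n)\otimes\Q$ and its differential is built from the $(\IHX)$-type tree operations. By the computations cited in Section~\ref{subsec:related-work} this homology is $\ATodd_n\otimes\Q$ when $d$ is odd and $\ATeven_n\otimes\Q$ when $d$ is even, the two cases differing only through signs governed by the parity of $d$ in $(\AS)$, $(\IHX)$ and in the differential. This yields the rational inclusions $\mathcal{A}^{\mathsf{T,odd/even}}_n\otimes\Q\hookrightarrow\pi_{n(d-3)}\Embp(\D^1,\D^d)\otimes\Q$ and the statement that they realise all classes on the lower vanishing line. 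For $n\le6$ I would invoke the explicit low-degree computations: these are valid over $\Z$ and identify $\Lie(n)/\mathrm{im}(\delta_n)$ with a free abelian group of the rank listed in the table, so that $\ATodd_n$ and $\ATeven_n$ are free, there is no extra torsion, and the integral injection above gives the stated integral inclusions.

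Finally, for $n=2$: up to planarity and the labelling of the two leaves there is a single planar rooted trivalent tree, $(\IHX)$ is vacuous (there is no internal edge), and imposing $(\AS)$ at the unique internal vertex gives $\Z[\Tree(2)]/\AS\cong\Lie(2)\cong\Z$. Moreover $\mathrm{im}(\delta_2)=0$: a differential hitting weight $2$ along the lower line could only originate in weight $1$, where by the Hirsch--Smale theorem $T_1(\D^1,\D^d)\simeq\Imm(\D^1,\D^d)\simeq\Omega S^{d-1}$ contributes only finite (stable) groups and the relevant connecting homomorphisms are isomorphisms, so nothing surjects onto $\Z=\Lie(2)$. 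Hence $\ATodd_2\cong\ATeven_2\cong\Lie(2)/\mathrm{im}(\delta_2)\cong\Z$ injects into $\pi_{2(d-3)}\Embp(\D^1,\D^d)$. To see the injection is onto, I would use that $\Embp(\D^1,\D^d)$ is $(2d-7)$-connected (in degrees $<d-3$ this is general position, $\pi_k\Embp(\D^1,\D^d)\cong\pi_k\Imm(\D^1,\D^d)=0$, and in the remaining degrees it follows from the connectivity of the Goodwillie--Weiss layers together with the standard computation $\mathrm{rank}(\ATodd_1)=\mathrm{rank}(\ATeven_1)=0$, see Section~\ref{subsec:related-work}). Therefore $\pi_{2(d-3)}$ is the first nonvanishing homotopy group; every element of it dies in $T_2(\D^1,\D^d)$ and hence lies in the image of $\realmap_2$, so the injection is an isomorphism, realising $\pi_{2(d-3)}\Embp(\D^1,\D^d)$ as $\Z\cong\Z[\Tree(2)]/\AS\cong\Lie(2)\cong\ATodd_2\cong\ATeven_2$.

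The step I expect to be the main obstacle is the second one: reconciling our description of the lower line of the $E^\infty$-page, namely $\Lie(n)/\mathrm{im}(\delta_n)$, with the tree-/graph-complex homology groups $\mathcal{A}^{\mathsf{T,odd/even}}_n$ as they occur in the literature --- aligning orientation and sign conventions, degree shifts, and the precise form of the differential --- and extracting from the published low-degree computations both the rank table and the vanishing of torsion for $n\le6$. The remaining steps are bookkeeping with Corollary~\ref{cor:main}, the long exact sequence of the pair $(T_n,\Embp)$, and the connectivity of the Goodwillie--Weiss layers.
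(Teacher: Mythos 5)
For general $n$ your route is the same as the paper's (whose proof is essentially one line): specialise Corollary~\ref{cor:main} to $M=\D^d$, so that the image of $\realmap_n$ is $\ker\ev_n\cong\Lie(n)/\im\delta_n$, i.e.\ the lower-line $E^\infty$ entry, and then import from Section~\ref{subsec:related-work} the rational collapse at $E^2$ together with the identification of the incoming $d_1$ on the diagonal, $\im(d_1)=\stusq$ resp.\ $\stusq_{\even}$ (Conant, Lambrechts--Turchin), the rank tables of Kneissler/Turchin and Scannell--Sinha, and torsion-freeness for $n\leq6$. Your phrasing of the $E^2$ entry as ``tree-complex homology with an $(\IHX)$-type differential'' is loose (what is needed is precisely that the image of $d_1$ in the diagonal is the $\stusq$ span), but since you defer to the cited computations this part is at the same level of citation as the paper itself.

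The $n=2$ part, however, has genuine gaps. First, your integral argument that $\im(\delta_2)=0$ rests on the claim that the incoming differentials originate in ``weight $1$'', which you identify with $T_1\simeq\Omega\S^{d-1}$ and hence with finite groups. This misidentifies the source: the $d_1$-differential into the $n=2$ diagonal entry originates in $\pi_{2(d-3)+1}$ of the first nontrivial layer (the one-chord column of $E^1$), not in $\pi_*T_1$, and that group is not finite in general --- for the disk this column is built from unstable homotopy of $\S^{d-1}$, and for $d$ odd the entry in the relevant bidegree contains, rationally, the Whitehead-square class in $\pi_{2d-3}\S^{d-1}$. That its image vanishes is exactly the content of $\im(d_1)=\stusq_{\mathsf{odd/even}}$ being empty in degree $2$, which together with the collapse and torsion-freeness of $\Lie(2)\cong\Z$ gives $\im\delta_2=0$; that is the paper's route. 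Second, your surjectivity step asserts that every element of $\pi_{2(d-3)}\Embp(\D^1,\D^d)$ dies in $T_2(\D^1,\D^d)$ because this is the first nonvanishing homotopy group; this does not follow --- for example for $d=4$ one has $\pi_{2(d-3)}T_1=\pi_3\S^3\cong\Z$, so connectivity alone does not force the map to $\pi_{2(d-3)}T_2$ to be zero. The paper closes this by invoking Turchin's integral computation \eqref{eq:Turchin-deg-2}, $\pi_{2(d-3)}\Embp(\D^1,\D^d)\cong\Z$, for which (after the collapse) the diagonal entry is the only contribution in that total degree; without this input your argument only yields an injection $\Z\hookrightarrow\pi_{2(d-3)}\Embp(\D^1,\D^d)$, a priori of finite index. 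Relatedly, deducing $(2d-7)$-connectedness ``from the connectivity of the layers'' is insufficient, since $\pi_{d-2}\Imm_\partial(\D^1,\D^d)\cong\Z$; that connectivity statement is likewise an input from the literature rather than a consequence of layer connectivity.
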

Note that the inclusion $\realmap_n$ will be mostly strict in other degrees~\cite{Turchin-Hodge}; for example, although $\ATeven_4=0$ we have $\pi_4\Embp(\D^1,\D^4)\otimes\Q\cong\Q$ coming from $\pi_4T_3(\D^1,\D^4)$ by~\cite[Table 2]{Scannell-Sinha}.

Furthermore, even though our main Theorem~\ref{thm:main} is a statement about embeddings of connected 1-manifolds $C=\D^1$ or $C=\S^1$, an intermediate result, Theorem~\ref{thm:multi-family}, constructs families $\mu_{\Gamma_i,d}$ of string links $\Embp(\bigsqcup_{i=0}^n\D^1,\D^d)$, and has the following consequence.

\begin{cor}\label{cor:string-links}
    For any $F=\sum_i\e_i\Gamma_i\in\Z[\Tree(n)]$ there is a class
    \[
        \sum_i\e_i[\mu_{\Gamma_i,d}]\in\pi_{n(d-3)}
        \Big(\Embp(\bigsqcup_{i=0}^n\D^1,\D^d);\bigsqcup_{i=0}^n a_i\Big),
    \]
    that is nontrivial as soon as $\realmap_n(F)\in\pi_{n(d-3)}\Embp(\D^1,M)$ is nontrivial for some manifold $M$. 
    Moreover, these classes live in the subspace $\Embp(\D^1,\D^d\sm\bigsqcup_{i=1}^na_i)\subseteq\Embp(\bigsqcup_{i=0}^n\D^1,\D^d)$. In particular, over $\Q$ they generate a subspace
    \[\begin{tikzcd}
        {\mathcal{A}^{\mathsf{T,odd/even}}_n\otimes\Q}\rar[hook ]{\mu_{-,d}} & \pi_{n(d-3)}
        \Embp(\bigsqcup_{i=0}^n\D^1,\D^d)\otimes\Q.
    \end{tikzcd}
    \]
\end{cor}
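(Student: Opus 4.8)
The plan is to use the grasper itself to build, for every manifold $M$ carrying a degree-$n$ grasper $\TG_n\colon\ball^d\hra M$ as in \eqref{eq:G-condition}, an explicit based ``insertion'' map
\[
    \Phi_{\TG_n}\colon\Embp\Big(\bigsqcup_{i=0}^n\D^1,\ball^d\Big)\ra\Embp(\D^1,M),\qquad \bigsqcup_{i=0}^n a_i\longmapsto\u ,
\]
sending a string link $(e_0,\dots,e_n)$ rel $\bigsqcup_i a_i$ to the knot obtained from $\u$ by replacing, for each $i$, the subarc $\u(J_i)=\TG_n(a_i)$ with $\TG_n(e_i)$ and leaving $\u$ unchanged outside $\TG_n(\ball^d)$. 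First I would verify $\Phi_{\TG_n}$ is well defined: the components $e_i$ are pairwise disjoint and contained in $\ball^d$, so the arcs $\TG_n(e_i)$ are pairwise disjoint and lie inside $\TG_n(\ball^d)$; by \eqref{eq:G-condition} the rest of $\u$ avoids $\TG_n(\ball^d)$; and $\TG_n(\partial e_i)=\TG_n(\partial a_i)=\u(\partial J_i)$, so the pieces glue to a neat embedding $C\hra M$, continuously in $(e_0,\dots,e_n)$, with the basepoint string link going to $\u$. The only delicate point is smoothness of the glued embedding at the points $\u(\partial J_i)$, which holds by the collar convention built into $\Embp$.

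Next I would unwind the definition of degree-$n$ grasper surgery in terms of $\Phi_{\TG_n}$. By Theorem~\ref{thm:multi-family} the string-link family $\mu_{\Gamma_i,d}$ has components $1,\dots,n$ constantly equal to $a_1,\dots,a_n$ and its $0$-th component moving along the family $\mu_{\Gamma_i,d}\colon\S^{n(d-3)}\to\Embp(\D^1,\ball^d\sm\bigsqcup_{j=1}^na_j)$. Feeding this into $\Phi_{\TG_n}$ replaces $\u(J_0)$ by $\TG_n(\mu_{\Gamma_i,d}(\vec{\tau}))$ and leaves the other $\u(J_j)$ fixed, which is precisely the grasper-surgery family for $\Gamma_i$; that is, $\Phi_{\TG_n}\circ\mu_{\Gamma_i,d}$ is the family $\S^{n(d-3)}\to\Embp(\D^1,M)$ whose class is $\realmap_n(\Gamma_i)\in\pi_{n(d-3)}\Embp(\D^1,M)$ in the sense of Corollary~\ref{cor:main}. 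Taking $\pi_{n(d-3)}$ and using naturality, $\Phi_{\TG_n,*}\big(\sum_i\e_i[\mu_{\Gamma_i,d}]\big)=\sum_i\e_i[\realmap_n(\Gamma_i)]=[\realmap_n(F)]$ in $\pi_{n(d-3)}\Embp(\D^1,M)$. Hence if $\sum_i\e_i[\mu_{\Gamma_i,d}]$ were trivial then $\realmap_n(F)$ would be trivial for \emph{every} $M$; contrapositively, its nontriviality for a single $M$ forces $\sum_i\e_i[\mu_{\Gamma_i,d}]\neq0$, which is the first assertion. The ``moreover'' that these classes lie in the image of $\pi_{n(d-3)}$ of $\Embp(\D^1,\ball^d\sm\bigsqcup_{j=1}^na_j)$ is then immediate from the description above: each $\mu_{\Gamma_i,d}$ already factors, as a map out of $\S^{n(d-3)}$, through that subspace (only the $0$-th component moves), and the image of a subgroup is a subgroup.

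For the rational statement I would take $M=\ball^d$ and invoke Corollary~\ref{cor:Dd}, where $\realmap_n$ is shown to be injective on $\mathcal{A}^{\mathsf{T,odd/even}}_n\otimes\Q$. By the above identity this map factors as $\Phi_{\TG_n,*}\circ\mu_{-,d}$ on (a chosen rational lift to $\Z[\Tree(n)]$ of) $\mathcal{A}^{\mathsf{T,odd/even}}_n\otimes\Q$, so $\mu_{-,d}\colon F\mapsto\sum_i\e_i[\mu_{\Gamma_i,d}]$ is itself injective there, giving the asserted inclusion $\mathcal{A}^{\mathsf{T,odd/even}}_n\otimes\Q\hookrightarrow\pi_{n(d-3)}\Embp(\bigsqcup_{i=0}^n\D^1,\ball^d)\otimes\Q$. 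The one thing that really needs care in all of this is that $\Phi_{\TG_n}$ must be defined on the \emph{entire} string-link space, not just on the subspace $\Embp(\D^1,\ball^d\sm\bigsqcup_{j=1}^na_j)$ --- it is exactly this that allows the vanishing of a homotopy class to transport across $\Phi_{\TG_n,*}$ --- together with keeping straight the two uses of the symbol $\realmap_n$ (the relative class of Theorem~\ref{thm:main} versus the class of its underlying family in $\pi_{n(d-3)}\Embp$, as in Corollary~\ref{cor:main}). Granting these, the corollary is a formal consequence of Theorems~\ref{thm:main} and~\ref{thm:multi-family}, and Corollary~\ref{cor:Dd}, via naturality of $\pi_*$.
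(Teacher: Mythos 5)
Your proposal is correct and follows essentially the same route as the paper, which proves the corollary by exactly this observation: a nullhomotopy of $\sum_i\e_i[\mu_{\Gamma_i,d}]$ in the full string-link space can be pushed forward through the grasper $\TG_n$ (your map $\Phi_{\TG_n}$, left implicit in the paper) to trivialise $\realmap_n(F)=[\TG_n\circ\mu_{\Gamma,d}(\vec\tau)]$ in $\Embp(\D^1,M)$, and the rational statement then follows from Corollary~\ref{cor:Dd} with $M=\D^d$. Your explicit verification that $\Phi_{\TG_n}$ is defined on the whole string-link space, and that grasper surgery is its restriction to families where only the $0$-th component moves, is just a spelled-out version of the paper's one-line argument.
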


Indeed, if the displayed class is trivial, then the image of that isotopy under a grasper $\TG_n$ trivialises $\realmap_n(F)\coloneqq\TG_n\circ\mu_{\Gamma,d}(\vec{\tau})$, see Step 2 of the outline in Section~\ref{subsec:overview}.

\subsection{Related work}\label{subsec:related-work}
Let us mention related previous results. 

\subsubsection*{Degree one}
For $n=1$ we have $T_1(C,M)=\Imm(C,M)$ and $\Lie_{\pi_1M}(1)=\Z[\pi_1M]$, and $\realmap_1$ is precisely the realisation map from~\eqref{eq:realmap1}, with the inverse $\evrel_2=\Dax$. This case is studied in detail in \cite{K-Dax}, with the goal of making $\evrel_2=\Dax$ as computable in practice as possible. 

\subsubsection*{Dimension three}
As already mentioned, in \cite{K-thesis-paper} we define $\realmap_n$ for $d=3$ and all $n\geq1$. The purpose of this paper is to do the same for all $d\geq4$, and explain the exact connection between clasper surgery and Whitehead products. 

\subsubsection*{Homology}
Cattaneo, Cotta-Ramusino and Longoni~\cite{Cattaneo-CottaRamusino-Longoni} construct nontrivial de Rham cohomology classes in $H^{n(d-3)}(\Embp(\D^1,\D^d);\R)$ generalising to all $d\geq3$ the Vassiliev filtration of knot invariants. More precisely, they give a map
\[
   H(D^{d,n,m})\ra H^{n(d-3)+m}(\Embp(\S^1,\R^d);\R)
\]
which they show is injective for $m=0$, where $D^{d,n,m}$ is a chain complex generated by graphs (and depending on $d$ only modulo two). 
These maps are constructed using Bott--Taubes configuration space integrals, and injectivity is shown by evaluating them on particular cycles constructed using chord diagrams and ``resolutions'' of double points. 

In fact, Longoni~\cite{Longoni} shows that these cycles factor through a map
\begin{equation}\label{eq:Longoni}
   H(D_{d,n,0})\ra H_{n(d-3)}(\Embp(\S^1,\R^d);\R),
\end{equation}
where $H(D_{d,n,0})$ is the quotient of the linear span of chord diagrams by $1T$ relations and $4T$ relations of either even or odd type. This is related to our classes in $\pi_{n(d-3)}(\Embp(\D^1,\D^d)$ in the same manner the Gusarov--Habiro and Vassiliev filtrations are related in the classical case, see~\cite{Habiro}. Namely, one considers immersions of $\D^1$ into $\D^d$ and defines resolutions of their double points, using sphere-worth of directions to push a strand off (cf.\ Definition~\ref{def:base} below).

The problem in homology analogous to Remark~\ref{rem:generalise-all-degrees} was addressed by Longoni~\cite{Longoni} and Pelatt and Sinha~\cite{Pelatt-Sinha}, following \cite{Cattaneo-CottaRamusino-Longoni}. Namely, both papers give a construction of a nontrivial class in $H_{3(d-3)+1}(\Emb(\S^1,\R^d);\Z)$ for any even $d\geq4$, corresponding to a graph with one fourvalent vertex and all other trivalent, and used configuration space integrals to show their nontriviality. Sakai~\cite{Sakai-nontrivalent} gives such a generator for odd $d$ by a different approach. See also Question~\ref{ques:rel-to-homology}.

\subsubsection*{Balls}

The (rational) homotopy type of the Taylor tower for $\Embp(\D^1,\D^d)$ and the associated homotopy spectral sequence  have been studied by many authors~\cite{Scannell-Sinha,Conant,LT-GC,ALTV,Turchin-Hodge,FTW,BH}, and some ranks of homology and homotopy groups have been computed. By~\cite[Thm.17.4]{Turchin-Hodge} the rank of $\pi_{\leq n}\Embp(\D^1,\D^d)$ grows at least exponentially in $n$.

In particular, it is known that when tensored with the rationals the homotopy spectral sequence for $\Embp(\D^1,\D^d)$ collapses on the second page, with several proofs by now \cite{LT-GC,ALTV,FTW,BH} (the last two apply also to $d=3$, and the last one also proves a partial collapse $p$-locally).

Moreover, \emph{when $d$ is odd} the image of the $d_1$-differential in the diagonal $E^2_{-(n+1),n(d-2)+1}$ of the spectral sequence has been identified by Conant~\cite{Conant} and Lambrechts--Turchin~\cite{LT-GC} independently, and is given by the so-called $\stusq$ relations, depicted in Figure~\ref{fig:stu2}. The result
\[
        \ATodd_n\coloneqq\faktor{\Lie(n)}{\stusq}\cong \faktor{\Z[\Tree(n)]}{\AS,\IHX,\stusq}
\]
is the abelian group of \emph{Jacobi trees} for $n\geq2$, and $\ATodd_1\coloneqq0$, closely related to the (primitives of the) algebra of chord diagram $\mathcal{A}$ from Vassiliev theory for classical knots. Ranks of these groups have been computed in a range by Kneissler~\cite{Kneissler} and by Turchin in \cite[Table 1]{Turchin-Hodge}:
\begin{center}
\begin{tabular}{c|cccccccccccc}
    $n$& 1 &2 &3 &4 &5 &6 &7 &8 &9 &10 &11 &12\\\hline
    $\dim(\ATodd_n\otimes\Q)$ &$0$ &$1$ &$1$ &$2$ &$3$ &$5$ &$8$ &$12$ &$18$ &$27$ &$39$ &$55$
\end{tabular}
\end{center}
Furthermore, it is known that for $n\leq6$ there is no torsion, see~\cite[Table 6]{Turchin-bialgebra}.

\begin{figure}[!htbp]
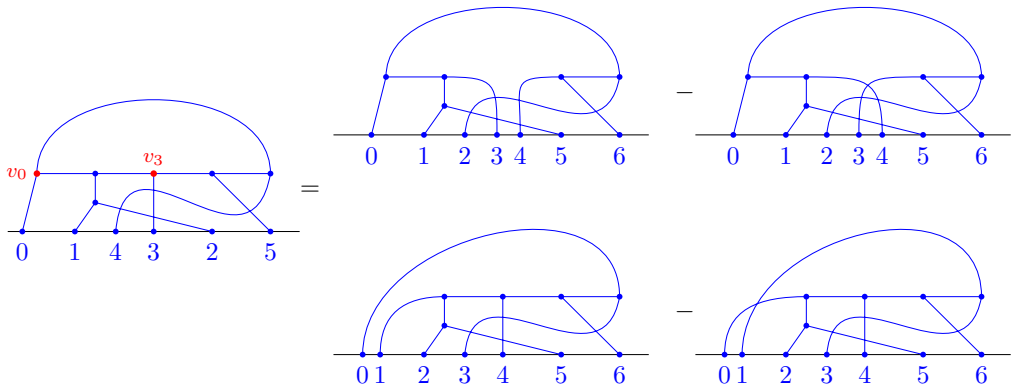

    \centering
    \includestandalone[mode=buildmissing,width=0.9\linewidth]{fig-knotted/fig-tree-stu2}
    \caption[The \protect$\stusq$ relation.]{The $\stusq$ relation is obtained by equating to zero the four terms on the right. They are in turn obtained from the graph on the left by resolving two vertices $v_0$ and $v_3$ as depicted.}
    \label{fig:stu2}
\end{figure}
\emph{When $d$ is even} one can repeat Conant's proof to see that $E^2_{-(n+1),n(d-2)+1}$ is given by
\[
        \ATeven_n\coloneqq\faktor{\Lie(n)}{\stusq_{\even}}\cong \faktor{\Z[\Tree(n)]}{\AS,\IHX,\stusq_{\even}}
\]
where $\stusq_{\even}$ is the same as in Figure~\ref{fig:stu2} except that minus signs should be replaced by plus signs.

We do not prove this claim here, but refer to Conant's proof~\cite{Conant} (perhaps more easily this sign can be spotted in the author's thesis \cite[105]{K-thesis}, where unfortunately by oversight only the odd case is discussed). Some computations of this group have been made by Scannell--Sinha~\cite[the lower nonvanishing line in Table 2]{Scannell-Sinha} (cf.\ \cite[Table 5]{Turchin-bialgebra}, which is over $\Z$):
\begin{center}
\begin{tabular}{c|cccccccccccc}
    $n$& 1 &2 &3 &4 &5 &6 \\\hline
    $\dim(\ATeven_n\otimes\Q)$ &$0$ &$1$ &$1$ &$0$ &$2$ &$1$ 
\end{tabular}
\end{center}
Moreover, let us mention that in~\cite[624]{Turchin-bialgebra} Turchin computes 
\begin{equation}\label{eq:Turchin-deg-2}
    H_{2(d-3)}\Embp(\D^1,\D^d)\cong\pi_{2(d-3)}\Embp(\D^1,\D^d)\cong\Z,
\end{equation}
and Budney~\cite[60]{Budney-family} constructs a generator. Our grasper surgery for $n=2$ in Corollary~\ref{cor:Dd} can be viewed as a more explicit description of that class (up to sign), and is schematically depicted in Figure~\ref{fig:realmap-2}.
Moreover, \cite[Prop.3.9]{Budney-family} constructs an isomorphism
\begin{equation}\label{eq:Haefliger-foliate}
    \pi_0\Embp(\D^{4k-1},\D^{6k})\cong\pi_{4k-2}\Embp(\D^1,\D^{2k+2})
\end{equation}
as a foliation map. Since grasper surgery is closely related to foliations (Section~\ref{subsec:foliations}), it follows that Budney's isomorphism takes the Haefliger's construction \eqref{eq:Haefliger} to a grasper surgery (Section~\ref{subsec:Haefliger}).

\subsubsection*{Other ambient manifolds}
Budney--Gabai~\cite{Budney-Gabai} studied the groups $\pi_{d-3}\Embp(\S^1,\S^1\times\S^{d-1})$ and $\pi_2\Embp(\D^1,\S^1\times\D^3)$. Our graspers are closely related to their bracket operation \cite[§4]{Budney-Gabai}.

    Starting from a connected trivalent graph $\Pi$ on $2n$ vertices Botvinnik and Watanabe~\cite{Botvinnik-Watanabe} constructed a map $H^\Pi\colon\S^{n(d-3)}\to\Emb^{\mathrm{fr}}(\S^1\sqcup\S^{d-2},X)$,
    where $X$ is any smooth $d$-manifold and the $\S^{d-2}$ component is unknotted and stands still and every $\S^1$ component in the family links it with the linking number one. Thus, they actually have
    \[
        h^\Pi\colon\S^{n(d-3)}\to\Emb^{\mathrm{fr}}(\S^1,X\#\,\S^1\times\ball^{d-1}).
    \]
    since for an unknotted $\S^{d-2}\subseteq X$ we have $X\sm\nu\S^{d-2}\cong X\#\,\S^1\times\ball^{d-1}$. The key ingredient of the construction are foliations of Haefliger's Borromean rings.
    
    By attaching $(d+1)$-dimensional $2$- and $1$-handles (dual to $(d-1)$-handles) along these links, one can associate to $H^\Pi$ an $X$-bundle over $\S^{n(d-3)}$, giving:
    \[
        W(H^\Pi)\colon\S^{n(d-3)}\to B\Diff_\partial(X).
    \]
This recovers the remarkable construction by Watanabe~\cite{Watanabe} of classes in homotopy groups of $\Diff_\partial(X)$, whose nontriviality for $X=\D^d$ and many $\Pi$ he showed using Kontsevich configuration space integrals.  The nontriviality of the homootpy class of $h^\Pi$ follows from this.

\subsection{Open questions}\label{subsec:questions}

    The following is an extremely interesting question:
    \begin{question}\label{quest:Botvinnik-Watanabe}
        How is $h^\Pi$ related to our knotted families $\realmap_n(\Gamma)$ for $C=\S^1$ and $M=X\#\,\S^1\times\ball^{d-1}$ (and either forget the framing or equip our families with a framing)?
    \end{question}
    If a direct relation exists, it is bound to give an interesting relation between trivalent graphs on $2n$ vertices and binary trees with $n$ leaves. Moreover, this can give a guiding step for reproving Watanabe's result using embedding calculus. However, let us note that our approach can describe potential torsion elements, whereas Kontsevich--Watanabe invariants are defined only over $\R$.

A natural next step in the geometric study of embedding spaces of arcs is the following problem:
    \begin{question}\label{quest:real-map-all-deg}
        For each $0<m<d-3$ construct a map
        \[
            \realmap_n^m\colon\pi_{n(d-3)+m}\pF_{n+1}(M)\to\pi_{n(d-3)+m+1}\big(T_n(\D^1,M),\Embp(\D^1,M)\big)
        \]
        such that $\evrel_{n+1}\circ\realmap_n^m=\Id$.
    \end{question}   
This was mentioned in Remark~\ref{rem:generalise-all-degrees}, and can probably be done using similar multi-families of commutators of foliated meridians, but now allowing more than one use of the same meridian.

Continuing on this is the following problem, which would reprove in the case $C=\D^1$ the fundamental theorem of Goodwillie and Klein \cite{GKmultiple} (see Theorem~\ref{thm:rpn}) by different methods.
\begin{question}\label{quest:reprove-GKW}
    For each $0\leq m<d-3$ prove that $\realmap_n^m\circ\evrel_{n+1}=\Id$ as well.
\end{question}

An easier problem is to try to generalise to all $1$-manifolds, as mentioned in Remark~\ref{rem:generalise-nonconnected}:
\begin{question}\label{quest:C-nonconn}
    Does Theorem~\ref{thm:main} hold for any $1$-manifold $C$, i.e.\  $C$ nonconnected?
\end{question}

    Finally, one can generalise the correspondence of Jacobi and chord diagrams to higher dimensions:
    \begin{question}\label{ques:rel-to-homology}
        How is $\realmap_n$ related to \eqref{eq:Longoni}? Study the exact relation of our classes in homotopy to the classes constructed previously in homology.
    \end{question}

\subsection*{Acknowledgements} 
I wish to thank Yuqing Shi and Pete Teichner for our numerous discussions during my PhD studies at the Max Planck Institute in Bonn, that gave rise to the realisation map in dimension three. Many thanks to Ryan Budney and Dave Gabai for citing this paper ahead of time, and to Peter Feller for encouraging me to finish it. Importantly, I am grateful to the referee for patient reading and detailed comments.

\section{Overview and examples}

\subsection{Outline of the proof}\label{subsec:overview}

We now outline this proof for $C=\D^1$; the proof for $C=\S^1$ is easily derived from the one for $C=\D^1$ in Section~\ref{subsec:main-proof-circle}. Let us fix a decorated tree $\Gamma^{g_{\ul{n}}}$. This is a rooted binary planar tree $\Gamma$ with $n$ leaves each decorated by an element $g_i\in\pi_1M$, $1\leq i\leq n$.

\textbf{Step 1.}
In Section~\ref{sec:multi-family} we will construct a family
\begin{equation}\label{eq-intro:mu}
    \mu_{\Gamma,d}\colon\S^{n(d-3)}\to\Embp(\D^1,\ball^d)
\end{equation}
as, roughly speaking, an iterated commutator of the loops foliating the meridians $m_i(\S^{d-2})$ of the arc $\u(J_i)$ for $1\leq i\leq n$. One has to make these commutators in an embedded manner, which can be done  by iteratively plumbing together copies of $\ball^d$; this is inspired by the construction in $d=3$ using classical $2$-dimensional gropes $G_\Gamma^c$.

Moreover, in Theorem~\ref{thm:multi-family} we will define a multi-family
\begin{equation}\label{eq-intro:mu-star}
    \mu^\star_{\Gamma,d}\colon \I^n\times\S^{n(d-3)}\to \Embp(\D^1,\ball^d),
\end{equation}
such that $\mu^\star_{\Gamma,d}(\vec{0},-)=\mu_{\Gamma,d}$ and $\mu^\star_{\Gamma,d}(\vec{t},-)=a_0$ if $\vec{t}\in\I^n$ has a coordinate equal to $1$. This will be the key ingredient for lifting our classes to $\pi_{n(d-3)+1}(T_n(\D^1,M),\Embp(\D^1,M))$ in Step 3.

\textbf{Step 2.}
In Section~\ref{sec:realmap} we will define a family 
\[
    \realmap_n(\Gamma^{g_{\ul{n}}})\colon\S^{n(d-3)}\to\Embp(\D^1,M)
\]
by postcomposing the family $\mu_{\Gamma,d}$ of arcs in $\ball^d$ from \eqref{eq-intro:mu} with any grasper $\TG_n\colon\ball^d\hra M$ that is decorated by $g_{\ul{n}}=(g_1,\dots,g_n)$. In other words, we let
\[
    \realmap_n(\Gamma^{g_{\ul{n}}})(\vec{\tau})\coloneqq\TG_n\circ\mu_{\Gamma,d}(\vec{\tau}).
\]
A grasper was defined in~\eqref{eq:G-condition} (see also Definition~\ref{def:grasper}), and its decoration in Definition~\ref{def:under-decoration}: it consists of the loops $g_i\in\pi_1M$ obtained by going from $\u(-1)$ to $\u(J_i)$ along $\u$ and then back on $\TG_n$.

Note that whereas family $\mu_{\Gamma,d}$ of reimbeddings of the arc $a_0$ can be viewed as the \emph{universal iterated Whitehead product} of the shape $\Gamma$ and dimension $d$, the family $\realmap_n(\Gamma^{g_{\ul{n}}})$ is the mentioned \emph{embedded version of the Whitehead product} according to $\Gamma^{g_{\ul{n}}}$ of meridian spheres for $\u$. Thus,  $\mu_{\Gamma,d}$ plays the role of an abstract clasper, $\TG_n$ of an embedded clasper, while $\realmap_n(\Gamma^{g_{\ul{n}}})$ generalises clasper surgery.

\textbf{Step 3.}
In Section~\ref{sec:tower} we exhibit a homotopy from $\ev_n\realmap_n(\Gamma^{g_{\ul{n}}})$ to $\ev_n(\const_\u)$ through maps $\S^{n(d-3)}\to T_n(\D^1,M)$, using the multi-family $\mu^\star_{\Gamma,d}$ of arcs in $\ball^d$ from \eqref{eq-intro:mu-star}. In particular, for the space $T_n(\D^1,M)$ we will use the reduced punctured knots model $\rpT_n(M)$ developed in \cite{KT-rpn}, based on the methods of \cite{K-thesis-paper}. Necessary results are listed in Section~\ref{subsec:tower}.

Firstly, in Definition~\ref{def:rpn} we recall from \cite{KT-rpn} that a point in the space $\rpT_n(M)$ is a map
\[
    \Delta^{n-1}\to\Embp(J_0,M_{0\ul{n}})
\]
that on the faces of the simplex restricts to embeddings with a particular intersection pattern with $\u\cap M_{0\ul{n}}$. Here $M_{0\ul{n}}$ is the manifold obtained from $M$ by removing a tubular neighbourhood $\nu\u$ and then adding back the pieces $\nu(\u|_{J_i})$ for $0\leq i\leq n$ (see Figure~\ref{fig:nbhd-of-U-S}), and $\Embp(J_0,M_{0\ul{n}})$ is the space of embeddings $\D^1\hra M_{0\ul{n}}$ which have the same boundary as $\u|_{J_0}$ Moreover, to define the map $\ev_n$ we use $\Embp(\D^1,M)\simeq\Embp(J_0,M_0)$ and the inclusion $M_0\hra M_{0\ul{n}}$. See Section~\ref{subsec:tower}.

Thus, the desired path in $\Omega^{n(d-3)}\rpT_n(M)$ can be defined as
\[\begin{tikzcd}
    {[0,1]}\times\Delta^{n-1}\times\S^{n(d-3)}\rar & \I^n\times\S^{n(d-3)}
    \rar{\mu^\star_{\Gamma,d}} & \Embp(\D^1,\ball^d)\rar{\TG_n\circ-} & 
    \Embp(J_0,M_{0\ul{n}}),
\end{tikzcd}
\]
using~\eqref{eq:G-condition} to ensure the desired intersection pattern on $\partial\Delta^{n-1}$, see Theorem~\ref{thm:grasper-gives-path}. For a map ${[0,1]}\times\Delta^{n-1}\sra\I^n$ see Remark~\ref{rem:previous-approach}. This will complete the construction of the class
\[
    \realmap_n(\Gamma^{g_{\ul{n}}})=[(\TG_n\circ-)\circ\mu^\star_{\Gamma,d}]\in\pi_{n(d-3)+1}(\rpT_n(M),\Embp(\D^1,M)).
\]

\textbf{Step 4.}
Finally, to prove the main Theorem~\ref{thm:main} it remains to show that $\evrel_{n+1}$ of the resulting relative homotopy class is equal to $[\Gamma^{g_{\ul{n}}}]\in\Lie_{\pi_1M}(n)$. We carry this out in Section~\ref{sec:proof}.
More precisely, in the group
\[
    \pi_{n(d-3)+1}(\rpT_n(M),\rpT_{n+1}(M))\cong\Lie_{\pi_1M}(n),
\]
where we use the isomorphism from \cite{K-thesis-paper}, we check that $\evrel_{n+1}\realmap_n(\Gamma^{g_{\ul{n}}})=[\Gamma^{g_{\ul{n}}}]$. 

The proof of this will be by induction on the degree of $\Gamma$ and will use Theorem~\ref{thm:analogues}, which establishes the correspondence of $\mu_{\Gamma,d}$ with the Samelson product $x_{\Gamma,d-2}\colon\S^{n(d-3)}\to\Omega\bigvee_n\S^{d-2}$, and of $\mu^\star_{\Gamma,d}$ with $x^\star_{\Gamma,d-1}\colon\I^n\times\S^{n(d-3)}\to\Omega\bigvee_n\ball^{d-1}$ (a certain restriction, defined in Proposition~\ref{prop:suspensions}, of the Samelson product $x_{\Gamma,d-1}$).

\begin{rem}
    In this overview we have made some simplifications. 
    Firstly, instead of $\Embp(\D^1,\ball^d)$ in the rest of the paper we will use $\Emb((\S^1,\S^1_+),(\ball^d,a_0))$ as the target of $\mu^\star_{\Gamma,d}$. Secondly, instead of the unit ball $\ball^d$ we will define its ``extended'' version $\ball^d_{ext}$, which has a distinguished region along which we can conveniently glue when we perform plumbings.
    Thirdly, instead of constructing a path in $\Omega^{n(d-3)}\rpT_n(M)$ we will directly produce a point in $\Omega^{n(d-3)}\pF_{n+1}(M)$, for the layer $\pF_{n+1}(M)$, defined as the fibre of the map $\p_{n+1}\colon\rpT_{n+1}(M)\to\rpT_n(M)$, see Remark~\ref{rem:previous-approach}.
\end{rem}

\subsection{Examples}\label{sec:examples}

Let us clarify our constructions with a few examples.

\subsubsection*{Degree one}
For $n=1$ the only tree $\Gamma$ is a chord, so that $\Lie_{\pi_1M}(1)\cong \Z[\Tree_{\pi_1M}(1)]\cong \Z[\pi_1M]$. For a group element $g\in\pi_1M$ its realisation
\[
    \realmap_1\left(\gchord{g}\right)\in\pi_{d-3}(\Embp(\D^1,M),\u)
\]
is the connected sum of $\u$ with its meridian $m(\S^{d-2})$ based by $g$, as in Figure~\ref{fig:realmap-1}.
\begin{figure}[!htbp]
    \centering
    \includestandalone[mode=buildmissing,width=0.48\linewidth]{fig-knotted/fig-realmap-1}
    \caption{A degree $1$ grasper surgery on $\u$, giving $\realmap_1(\Gamma^g)$.}
    \label{fig:realmap-1}
\end{figure}

The reason why this class vanishes in $\rpT_1(M)\coloneqq\Embp(J_0,M\sm\nu(\u|_{J_1}))$ is that once the puncture at $J_1$ is made, the arcs foliating the meridian can be dragged up (across the meridian ball) and back to $\u(J_0)$.
These classes are detected by $\ev_2$, or equivalently, by the Dax invariant mentioned in the introduction, see references cited there. The chord decorated by $g=1\in\pi_1M$ is always killed by $\delta_1$, so in particular $\pi_{d-3}(\Embp(\D^1,\D^d),\u)=0$.

\subsubsection*{Degree two}
For $n=2$ there are two trees in $\Tree(2)$, but they are related by the antisymmetry relation $(\AS)$, so $\Lie_{\pi_1M}(2)\cong \Z[(\pi_1M)^2]$. For $g_1,g_2\in\pi_1M$ the realisation
\[
    \realmap_2\left(\igtree{1}{2}{g_1}{g_2}\right)\in\pi_{2(d-3)}(\Embp(\D^1,M),\u)
\]
is the embedded commutator of the arcs foliating the two meridians of $\u$ (depicted in Figure~\ref{fig:realmap-2}) based by $g_1$ and $g_2$. The reason that this class vanishes in $\rpT_2(M)\subseteq\Map(\Delta^1,\Embp(J_0,M_0\cup\nu(\u|_{J_1\sqcup J_2}))$ is that once the punctures at $J_1$ and $J_2$ are made, those arcs can be pulled up and back to $\u(J_0)$ (across the respective meridian balls) in $1$-parameter many ways; this gives rise to the map $\mu_{\Gamma,d-2}^\star$ in \eqref{eq-def:mu-star}, which defines $\u^\star_{\TG,\Gamma^{g_{\ul{n}}}}$ in \eqref{eq:grasper-gives-path}. These classes are detected by $\ev_3$.

\begin{figure}[!htbp]
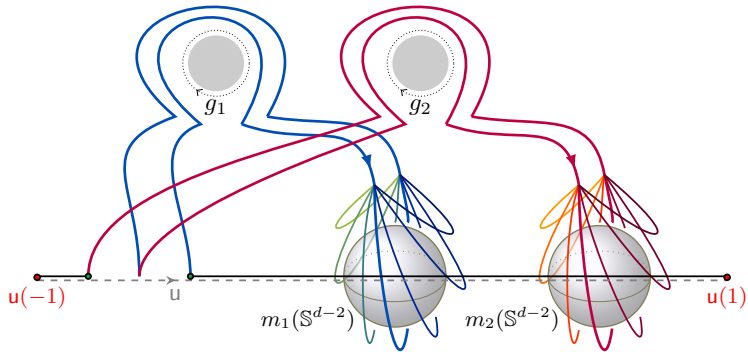

    \centering
    \includestandalone[mode=buildmissing,width=0.65\linewidth]{fig-knotted/fig-realmap-2}
    \caption{The arcs whose embedded commutators form a degree $2$ grasper surgery on $\u$, for $d=4$. The meridian 2-spheres $m_j(\S^{d-2})$ are drawn schematically, but should in fact not be contained in the depicted 3-dimensional slice (they are normal spheres to $\u$ and do not intersect it).}
    \label{fig:realmap-2}
\end{figure}
If we instead use the tree with the other tree, then in Figure~\ref{fig:realmap-2} we just need to swap the order in which the red and blue arcs attach to $J_0$. However, by Corollary~\ref{cor:main} the $(\AS)$ relation tells us that
\[
    \realmap_2\left(\igtree{2}{1}{g_2}{g_1}\right)=-
    \realmap_2\left(\igtree{1}{2}{g_1}{g_2}\right),
\]
i.e.\ $\realmap_2$ descends to $\Lie_{\pi_1M}(2)$.
The origins of this are in the antisymmetry of the Samelson bracket. 

In particular, $\pi_{2(d-3)}(\Embp(\D^1,\D^d),\u)=\Lie(2)=\Z$ since $\stusq$ relations are empty in this degree.

\subsubsection*{Degree three}
For $n=3$ there are twelve nondecorated trees in $\Tree(3)$. 
Using the relations $(\AS)$ we can reduce to three of them, which are then related by the Jacobi relation $(\IHX)$:
\[\begin{tikzpicture}[baseline=2ex,scale=0.35,every node/.style={scale=0.8}]
        \clip (-2.5,-0.6) rectangle (2.5,4.45);
        \draw (-0.15,-0.2) rectangle (0.15,0);
        \draw[thick]
            (0,0) -- (0,1) -- (-1,2) -- (-2,3) node[pos=1,above]{$3$}
                    (1,2) -- (0,3) node[pos=1,above]{$2$}
                    (0,1) -- (2,3)  node[pos=1,above]{$1$};
\end{tikzpicture} =
    \begin{tikzpicture}[baseline=2ex,scale=0.35,every node/.style={scale=0.8}]
        \clip (-2.5,-0.6) rectangle (2.5,4.45);
        \draw (-0.15,-0.2) rectangle (0.15,0);
        \draw[thick]
            (0,0) -- (0,1) -- (-1,2) -- (-2,3) node[pos=1,above]{$3$}
                              (-1,2) -- (0,3) node[pos=1,above]{$2$}
                    (0,1) -- (2,3)  node[pos=1,above]{$1$};
\end{tikzpicture}
+ 
\begin{tikzpicture}[baseline=2ex,scale=0.35,every node/.style={scale=0.8}]
        \clip (-2.5,-0.6) rectangle (2.5,4.45);
        \draw (-0.15,-0.2) rectangle (0.15,0);
        \draw[thick]
            (0,0) -- (0,1) -- (-1,2) -- (-2,3) node[pos=1,above]{$2$}
                    (1,2) -- (0,3) node[pos=1,above]{$3$}
                    (0,1) -- (2,3)  node[pos=1,above]{$1$};
\end{tikzpicture}.
\]
Therefore, $\Lie_{\pi_1M}(3)=\Z^2[(\pi_1M)^3]$. An obvious generalisation of Figure~\ref{fig:realmap-2} gives rise to
\[
    \realmap_3\left(\igTree{1}{2}{3}{g_1}{g_2}{g_3}\right)\in\pi_{3(d-3)}(\Embp(\D^1,M),\u).
\]
The fact that $\realmap_3$ vanishes on $(\IHX)$ has origins in the Jacobi relation for the Samelson bracket.

In particular, we have $\Lie(3)=\Z^2$. However, there is now a nontrivial $\stusq$ relation between the two generators, so we have $\pi_{3(d-3)}(\Embp(\D^1,\D^d),\u)=\Z$ (see Corollary~\ref{cor:Dd}).
\begin{rem}\label{rem:graded-signs}
    One can prove $(\AS)$ and $(\IHX)$ for the Samelson bracket by imitating the proof that such relations hold for the commutator bracket in the associated graded of the lower central series of a group; see~\cite[Sec.X.5]{Whitehead} for this proof. Note that the relations for the Samelson bracket have graded signs. In particular, the homotopy groups of the wedge sum of $n$ spheres form a free \emph{graded} Lie algebra $\mathbb{L}_n$ on $n$ letters.

    Alternatively, one could give a direct geometric argument for why $\realmap_n$ vanishes on \emph{nongraded} $(\AS)$ and $(\IHX)$. We save ourselves from doing this by applying the Convergence Theorem at the end of the proof in Section~\ref{subsec:main-proof-arc}. However, see \cite{CST} for a proof in dimension $d=3$ of a similar result. 

    The last two paragraphs are not in contradiction, since the subgroup $\Lie_d(n)$ of $\mathbb{L}_n$, consisting of the words in which each letter appears exactly once, is isomorphic to the group $\Lie(n)$ (of nondecorated trees modulo nongraded $(\AS)$ and $(\IHX)$ as in Section~\ref{subsec:trees}), see \cite[Lem.2.3]{K-thesis-paper}. This is also compatible with our inductive definition of Samelson brackets, see~\cite[App.B]{K-thesis-paper}.
\end{rem}

\section{Preliminaries}\label{sec:preliminaries}

In Sections~\ref{subsec:notation-gen} and~\ref{subsec:notation-other} we fix some notation. In Section~\ref{subsec:model-ball} we define the extended ball $\ball^d_{ext}$, the model ball $\ball^d_n\coloneqq(\ball^d_{ext},a_0,\bigsqcup_{i=1}^n a_i)$, and the meridians $m\colon\bigvee_n\ball^{d-1}\hra\ball^d_n$. In Section~\ref{subsec:foliations} we define the maps $\mu_d$ and $\mu^\star_d$, and in Section~\ref{subsec:trees} decorated trees $\Gamma^{g_{\ul{n}}}$ and the group $\Lie_{\pi_1M}(n)$. 

\subsection{Notation: general}\label{subsec:notation-gen}

Throughout the paper we fix a smooth $d$-manifold $M$ with $d\geq4$, and $C$ equal either to $\D^1$ or $\S^1$. The space $\Embp(C,M)$ consists of neat smooth embeddings of $C$ into $M$ with a fixed boundary condition $\u|_{\partial C}$ (which is empty if $C=\S^1$), where $\u\colon C\hra M$ is an arbitrary basepoint. We often identify embeddings with their images. 

For spaces $B_i\subseteq A_i\subseteq X_i$, $i=1,2$, the space $\Map((X_1,A_1,B_1),(X_2,A_2,B_2))$ consists of maps $X_1\to X_2$ which take $A_1$ to $A_2$ and $B_1$ to $B_2$. And similarly for embeddings. For a point $x_2\in X_2$ we often write $X_1\times x_2\subseteq X_1\times X_2$, omitting the parentheses for the one-point set.

We denote by $X_1\wedge X_2\coloneqq{X_1\times X_2}/{X_1\vee X_2}$ the smash product. Note that $\S^1\wedge\S^{d-3}$ is the reduced suspension. We have the based loop space $\Omega X\coloneqq\Map((\S^1,e),(X,*_X))$ and the adjunction
\begin{equation}\label{eq:adjunction}
    \Map((\S^1\wedge X_1,e\wedge x_1),(X_2,x_2))\cong\Map((X_1,x_1),(\Omega X_2,\const_{x_2})),
\end{equation}
given by $f\mapsto (x\mapsto (\theta\mapsto f(\theta\wedge x)))$.

\subsection{Notation: balls and spheres}\label{subsec:notation-other}

We use the notation $\I\coloneqq[0,1]$ and $\D^1\coloneqq[-1,1]$. For $-1<L_i<R_i<L_{i+1}$ such that the sequence $R_i$ converges to $R_\infty<1$ we fix a collection of disjoint closed subintervals
\begin{equation}\label{eq-def:J-i}
    J_i=[L_i,R_i]\subseteq \D^1
\end{equation}
Moreover, we let $J_i\subseteq\faktor{\D^1}{\partial}\cong\S^1$.

For a finite set $S$ let $\I^S=\Map(S,\I)$ be the $|S|$-dimensional cube, with coordinates labelled by $S$. Faces of the cube which have a coordinate equal to 0 or 1 will be called \emph{$0$- and $1$-faces} respectively. Similarly, let $\Delta^S$ be the simplex spanned by the set $S$; this means that $\Delta^S\cong\Delta^{|S|-1}$ has vertices labelled by $i\in S$, edges by $\{i,j\}\subseteq S$ and so on. For $n\geq1$ denote $\ul{n}\coloneqq\{1,2,\dots,n\}$.

We write $\vec{t}\in\I^S$, and $t_i\in\I$ for its coordinate that corresponds to $i\in S$. Similarly, we write $\vec{\tau}\in\S^{n(d-3)}\cong\bigwedge_n\S^{d-3}$,
with $\tau_i\in\S^{d-3}$ as the coordinate in the $i$-th factor of the smash product. Moreover, if $n=n_1+n_2$ we fix homeomorphisms
\begin{equation}\label{eq:smash}
    \S^{n(d-3)}\cong\S^{n_1(d-3)}\wedge\S^{n_2(d-3)}\cong\faktor{\I^{n_1(d-3)}\times\I^{n_1(d-3)}}{\partial}.
\end{equation}
On the other hand, sometimes we identify $\S^d$ with the quotient of the cube $(\D^1)^d$ by its boundary; we fix such an identification once and for all. Then, using $\I\coloneqq[0,1]\subseteq[-1,1]\eqqcolon\D^1$ we also have a canonical map $\I^d\sra\I^d/\text{1-faces}\hra\S^d$.

Next, for any $d\geq2$ we consider the ball
\[
    \ball^d\coloneqq\{(x_1,\dots,x_d)\in\R^d:x_1^2+\dots+x_d^2\leq 1\}.
\]
Let $e^+_d\coloneqq(1,\vec{0})$ and $e^-_d\coloneqq(-1,\vec{0})\in\partial\ball^d$ be the north and south poles (since in our pictures $x_1$ is the vertical axis, see Figure~\ref{fig:balls}), and take $e^+_d$ as the basepoint of $\ball^d$.  We fix the inclusion
\[
    \ball^{d-1}\cong\{x_d=0\}\subseteq\ball^d.
\]
Since $e^\pm_{d-1}$ is identified with $e^\pm_d$, we will often simply write $e^\pm$.

\begin{figure}[!htbp]
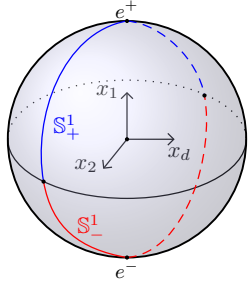

    \centering
    \includestandalone[mode=buildmissing,width=0.23\linewidth]{fig-knotted/fig-balls}
    \caption{The ball $\ball^d$ for $d=3$.}
    \label{fig:balls}
\end{figure}
Let $\S^1_+\coloneqq\{(x_1,x_2)\in\partial\ball^2:x_1\geq0\}$ and $\S^1_-\coloneqq\{(x_1,x_2)\in\partial\ball^2:x_1\leq0\}$ be the northern, respectively southern, semicircles, which we view as arcs in boundaries of balls of all dimensions $d\geq2$, using our inclusions $\S^1_+\subseteq\ball^2\subseteq\ball^3\dots\subseteq\ball^d$. They also have the basepoint $e^+$.

\subsection{The model ball}\label{subsec:model-ball}
Fix $d\geq3$ and $n\geq1$. Firstly, we define the \emph{extended ball} as the space
\[
    \ball^d_{ext}\cong\ball^d\cup (\ball^{d-3}\times\D^1\times\D^1\times\D^1)
\]
where we glue $\vec{0}\times-1\times\D^1\times0$ to $\S^1_+\subseteq\partial\ball^d$, and the face $\ball^{d-3}\times-1\times\D^1\times\D^1$ to a tubular neighbourhood of $\S^1_+\subseteq\partial\ball^d$, so that the last factor $\D^1$ is orthogonal to the meridian sphere $\partial\ball^{d-1}\subseteq\partial\ball^d$ (and so $\ball^{d-3}$ is tangent to it); see Figure~\ref{fig:ball-extended}. We refer to the part $\ball^{d-3}\times\D^1\times\D^1\times\D^1$ as the \emph{extension}.

\begin{rem}\label{rem:smooth}
    We have opted for defining the extension as a thickened cube instead a ball for the ease of describing the gluing later on. This gives a space $\ball^d_{ext}$ with specified coordinates which are not smooth. Moreover, the curves and bands that we construct in this space (in Definition~\ref{def:emb-commutator}, for example) will pass through some of these corners. 
    However, see Remarks~\ref{rem:smooth-a0} and~\ref{rem:smooth-emb-comm}.
\end{rem}
\begin{figure}[!htbp]
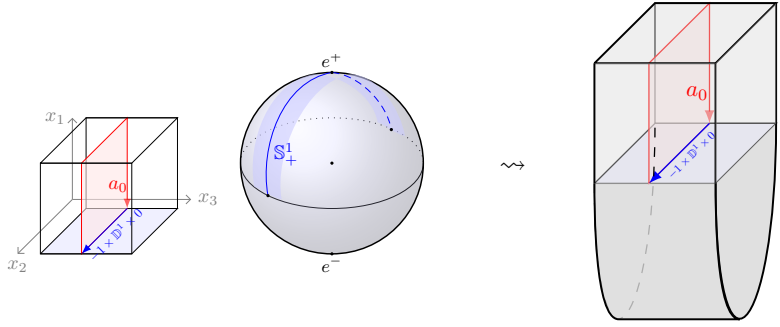

    \centering
\begin{minipage}[c]{.38\linewidth}
    \flushright
    \includestandalone[mode=buildmissing,width=\linewidth]{fig-knotted/fig-ball-extended}
\end{minipage}
$\qquad\rightsquigarrow\qquad$
\begin{minipage}[c]{.17\linewidth}
    \centering
    \includestandalone[mode=buildmissing,width=\linewidth]{fig-knotted/fig-ball-extended-final}
\end{minipage}
    \caption{The extended $d$-ball $\ball^d_{ext}$ for $d=3$ is the union of the cube and the ball as on the left, glued along blue strips. We depict it as on the right.}
    \label{fig:ball-extended}
\end{figure}

Moreover, as in Figure~\ref{fig:ball-extended} we define the \emph{arc}
\begin{equation}\label{eq-def:a-0}
     a_0\colon\D^1\hra\partial\ball^d_{ext}
\end{equation}
to have the image $\vec{0}\times(\D^1\times1\,\cup\,1\times\D^1\,\cup\,\D^1\times-1)\times0$, and its \emph{strip} neighbourhood
\begin{equation}\label{eq-def:A-0}
     A_0\colon\D^1\times[-\e,\e]\hra\partial\ball^d_{ext}
\end{equation}
to have the image $\vec{0}\times(\D^1\times1\,\cup\,1\times\D^1\,\cup\,\D^1\times-1)\times\D^1$. Finally, define the \emph{circle} $c_0\colon\S^1\hra\ball^d_{ext}$
as the union of $a_0$ and $\vec{0}\times-1\times\D^1\times0$ (so $c_0$ is the boundary of the middle vertical square), and $C_0\colon\D^1\times[-\e,\e]\hra\partial\ball^d_{ext}$ as its \emph{annulus} neighbourhood.
\begin{rem}\label{rem:smooth-a0}
    Note that $a_0(\D^1)\subseteq\partial\ball^d_{ext}\subseteq\R^d$ is not smooth as it passes through the corners of the cube $(\D^1)^3$. However, we will often write
    \[
         \Emb((\S^1,\S^1_+),(\ball^d_{ext},a_0))
    \]
    where we use the following convention: a map belongs to this space if its a smooth embedding apart from the corners of $a_0$. This is topologised using any homeomorphism $\ball^d_{ext}\cong\ball^d$ which identifies $a_0$ with a smooth curve.
\end{rem}

Next, for $i\geq 1$ we choose a collection of \emph{neat} arcs $a_i\colon(\D^1,\partial\D^1)\hra(\ball^d_{ext},\partial\ball^d_{ext})$ that miss the extension and are smoothly neatly embedded in the part $\ball^d\subseteq\ball^d_{ext}$. We also pick their small tubular neighbourhoods $\nu(a_i)$. Finally, let the \emph{model ball of degree $n$} be the tuple
\begin{equation}\label{eq-def:ball-d-n}
    \ball^d_n\coloneqq \big(\ball^d_{ext},a_0,\bigsqcup_{i=1}^n a_i\big).
\end{equation}
%
Note that for a fixed $d\geq4$ and $n\geq1$ the tuple $\ball^d_n$ is unique up to isotopy, even relative to the boundary. See Section~\ref{subsec:gropes} for a motivation of this definition.

In particular, for $n=1$ we make a particular choice for $a_1$, so that it has image
\begin{equation}\label{eq:a-1}
    a_1(\D^1)\coloneqq\ball^d\cap x_d\text{-axis}.
\end{equation}
We have a homeomorphism $\ball^d\cong a_1\times\ball^{d-1}$, so we can view $\ball^d_1$ as a tubular neighbourhood of the arc $a_1$. Let $a_1(0)\times\frac{1}{2}\ball^{d-1}$ be a smaller ball, which we call the (oriented) \emph{meridian ball} to the arc $a_1$, and its boundary $a_1(0)\times\partial\frac{1}{2}\ball^{d-1}$ is the \emph{meridian (sphere)}. 
Similarly, for each $a_i$ we pick a meridian ball $m_i\colon\ball^{d-1}\hra\ball^d_n$ at $a_i(0)$, together with a path, called the \emph{whisker}, to $e^+$. We thicken these whiskers to obtain an inclusion
\begin{equation}\label{eq-def:m}
    m\colon\bigvee_n\ball^{d-1}\hra\ball^d_n.
\end{equation}
 Note that $\partial m\colon\bigvee_n\S^{d-2}\hra\ball^d\sm\bigsqcup_{i=1}^na_i$ is a homotopy equivalence.

\subsection{Foliations}\label{subsec:foliations}

Pick a homeomorphism $\S^{d-2}\cong\S^1\wedge\S^{d-3}$ with the reduced suspension
and consider its adjoint map (via \eqref{eq:adjunction}):
\begin{equation}\label{eq-def:x}
    x_{d-2}\colon\S^{d-3}\ra\Omega\S^{d-2},
\end{equation}
for which $x_{d-2}(e^+_{d-3})=\const_{e^+_{d-2}}$.

We parametrise $x_{d-2}$ so that it satisfies the following inductive property: the restriction of $x_{d-1}$ to the meridian $\S^{d-3}\subseteq\S^{d-2}$ has image in the meridian $\S^{d-2}\subseteq\S^{d-1}$, and agrees with $x_{d-2}$. Moreover, we can view $x_{d-1}\colon\S^{d-2}\ra\Omega\S^{d-1}$ as a ``suspension'' of $x_{d-2}$: by gluing together two maps of the shape
\begin{equation}\label{eq-def:x-star}
    x^\star_{d-1}\colon\I\times\S^{d-3}\ra\Omega\ball^{d-1}
\end{equation}
each of which homotopes $x_{d-2}$ to $\const_{e^+}$, one using the eastern, the other the western hemisphere of $\S^{d-1}$. Namely, we use $\S^{d-2}\cong\ball^{d-2}_E\cup_\partial\ball^{d-2}_W$ and $\ball^{d-3}_{E/W}\cong\I\times\S^{d-3}/1\times\S^{d-3}$, see Figure~\ref{fig:foliation-x}.
\begin{figure}[!htbp]
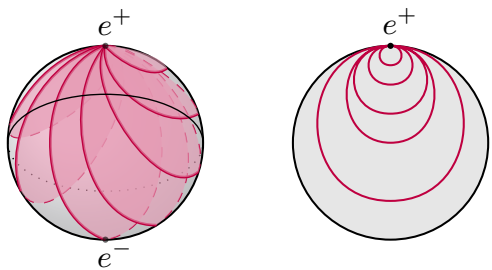

    \centering
    \includestandalone[mode=buildmissing,width=0.43\linewidth]{fig-knotted/fig-foliation-x}
    \caption{The foliations $x_{d-2}$ of $\S^{d-2}$ and $x^\star_{d-2}$ of $\ball^{d-2}$ respectively, for $d=4$. The latter is the restriction of the former to any of the two hemispheres. Similarly, the foliation $x_{d-1}$ of $\S^{d-1}$ can be obtained by gluing together two copies of $x^\star_{d-1}$ along their boundaries, and $x^\star_{d-1}$ is given as the foliated ball on the left but where each pink disk should be foliated using $x^\star_2$, depicted on the right.}
    \label{fig:foliation-x}
\end{figure}
\begin{figure}[!htbp]
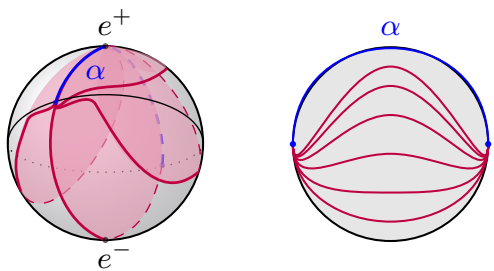

    \centering
    \includestandalone[mode=buildmissing,width=0.43\linewidth]{fig-knotted/fig-foliation-modified}
    \caption{The foliation $\mu^\star_d$ of $\ball^{d-1}\subseteq\ball^d$ for $d=4$ is given on the left, but where each pink disk $\ball^2$ should be foliated using $\mu^\star_2$, depicted on the right.}
    \label{fig:foliation-modified}
\end{figure}

In fact, we will use the following modifications $\mu_d$ and $\mu^\star_d$ of the foliations $x_{d-2}$ and $x^\star_{d-1}$.

We first homotope $x_{d-2}$ so that each circle agrees on $\S^1_+$ with our fixed arc $\alpha\coloneqq\S^1_+\subseteq\S^{d-2}$, see Figure~\ref{fig:foliation-modified}. Note that every circle in the family is embedded, except at the basepoint $e^+_{d-3}$ where we have $\alpha\alpha^{-1}$. 
Secondly, we include this into our extended ball $\ball^d_{ext}$ as the foliation of the meridian sphere $\S^{d-2}=\partial\frac{1}{2}\ball^{d-1}\subseteq\partial\ball^d$ (see the paragraph after \eqref{eq:a-1}), and we replace $\alpha$ in each circle by our arc $a_0$, together with a fixed path connecting their endpoints in a smooth manner. We denote the resulting foliation by $\mu_d$, and note that at the basepoint we have
\[
    \mu_d(e^+_{d-3})=a_0\alpha^{-1}=c_0,
\]
by the definition of $c_0$ in Section~\ref{subsec:model-ball}. Therefore, we have
\begin{equation}\label{eq-def:mu}
    \mu_d\colon\S^{d-3}\ra\Emb((\S^1,\S^1_+),(\ball^d_{ext},a_0)).
\end{equation}
By an analogous modification of $x^\star_{d-1}$, we obtain a map
\begin{equation}\label{eq-def:mu-star}
    \mu^\star_d\colon\I\times\S^{d-3}\ra\Emb((\S^1,\S^1_+),(\ball^d_{ext},a_0)).
\end{equation}
This foliates the meridian ball $\ball^{d-1}\subseteq\ball^d$, so that $\mu^\star_d(0,-)=\mu_{d-1}$ and $\mu^\star_d(1,-)=\const_{c_0}$.

\subsection{Trees}\label{subsec:trees}
Let $\Tree(n)$ be the set of (rooted planar binary) trees with $n$ ordered leaves (nonroot univalent vertices). Define the set of \emph{decorated trees} as the product
\[
    \Tree_{\pi_1M}(n)\coloneqq \Tree(n)\times(\pi_1M)^n.
\]
We view a $\pi_1M$-decorated tree $\Gamma^{g_{\ul{n}}}\in\Tree_{\pi_1M}(n)$ as a tree $\Gamma\in\Tree(n)$ of \emph{degree} $\deg\Gamma=n$, together with the decoration $g_i\in\pi_1M$ for the $i$-th leaf, $1\leq i\leq n$.

We can assign to a tree $\Gamma$ a bracketed word with $n$ distinct letters $\mathsf{x}_i$ (corresponding to the leaves) and $n-1$ brackets (corresponding to trivalent vertices). Therefore, we will often write 
\[
    \Gamma=[\Gamma_1,\Gamma_2]
\]
to mean that the tree $\Gamma\in\Tree(n)$ is obtained by gluing together the roots of $\Gamma_j\in\Tree(S_j)$, where $S_1\sqcup S_2=\ul{n}$, and sprouting a new root. By $\Tree(S)$ we mean trees whose leaves are labelled by the elements of the set $S$.

We define $\Lie_{\pi_1M}(n)$ as the quotient of the free abelian group $\Z[\Tree_{\pi_1M}(n)]$ by the relations $(\AS)$ and $(\IHX)$ locally given as follows:
\[\begin{aligned}
    \includestandalone[mode=buildmissing]{fig-knotted/fig-tree-AS-IHX}
\end{aligned}
\]
We have that $\Lie_{\pi_1M}(n)\cong\Lie(n)\otimes\Z[(\pi_1M)^n]$, where $\Lie(n)=\Lie_{\{1\}}(n)$ is the subgroup of the free Lie algebra on $n$ letters consisting of Lie words written with exactly $n$ distinct letters. 

\begin{rem}
    We thank the referee for pointing out that usually (for example in the operadic construction of free algebras), one additionally mods out the diagonal action of the symmetric group on $n$ letters. In our case, $C=\I$ is the interval or $C=\S^1$ is the based circle, so that subintervals of $C$ can be ordered, giving a canonical order for the leaves of a tree.
\end{rem}

\section{The realisation map}\label{sec:realmap}

In Section~\ref{subsec:realisation-map} we define a grasper $\TG_n$ modelled on a tree $\Gamma\in\Tree(n)$, and surgery on $\TG_n$, which produces a knotted family $\realmap_n(\Gamma^{g_{\ul{n}}})$ of arcs or circles in a $d$-manifold $M$ (this is Step 2 of Section~\ref{subsec:overview}). The surgery uses the family
\[
    \mu_{\Gamma,d}\coloneqq\mu^\star_{\Gamma,d}(\vec{0},-)\colon\S^{n(d-3)}\ra \Emb((\S^1,\S^1_+),(\ball^d_n,a_0)),
\]
which will be constructed in the next Section~\ref{sec:multi-family}, see~\eqref{eq-def:mu-Gamma} (this is Step 1 of Section~\ref{subsec:overview}). We opted for this reversed exposition in order to clarify the motivation behind the somewhat involved definition of $ \mu_{\Gamma,d}$. To this end, in Section~\ref{subsec:gropes} we explain the background inspiration coming from gropes and claspers, that gave rise to the name ``grasper'', and in Section~\ref{subsec:Haefliger} we explain how the Haefliger trefoil is also a geometric analogue of a Samelson product.

\subsection{Graspers and knotted families}\label{subsec:realisation-map}

Recall that $C=\D^1$ or $C=\S^1$, and $J_i\subseteq C$ are fixed intervals from~\eqref{eq-def:J-i}, and the model ball $\ball^d_n=(\ball^d_{ext},a_0,\bigsqcup_{i=1}^na_i)$ was defined in~\eqref{eq-def:ball-d-n}.
\begin{defn}[Grasper]\label{def:grasper}
    For a smooth manifold $M$ of dimension $d\geq3$ and a smooth embedding $\u\colon C\hra M$, a \emph{grasper} of degree $n$ in $M$ relative to $\u$, written
    \[
        \TG_n\colon\ball^d_n\hra M,
    \]
    is a smooth embedding of $\ball^d$ into $M$ such that
    $\TG_n(a_0)=\u(J_0)$ and $\TG_n(a_i)=\u(J_i)$ and otherwise $\TG_n(\ball^d)$ is disjoint from $\u(C)$. 
\end{defn}
\begin{defn}[Underlying decoration]\label{def:under-decoration}
    For a grasper $\TG_n\colon\ball^d_n\hra M$ and $1\leq i\leq n$ denote by $p_i\in C$ the point such that $\u(p_i)=\TG_n(a_i(0))$. We define $g_i\in\pi_1M$ as the homotopy class of the loop given by $\u|_{[0,p_i]}$ followed by the image under $\TG_n$ of the whisker for the meridian ball to $a_i$. We say that the tuple $g_{\ul{n}}\coloneqq(g_1,\dots,g_n)\in(\pi_1M)^n$ \emph{decorates} the grasper $\TG_n$.
\end{defn}
We use graspers and the map $\mu_{\Gamma,d}$
from~\eqref{eq-def:mu-Gamma} to define knotted families of arcs or circles in an arbitrary $d$-manifold $M$. 
Namely, for a tree $\Gamma\in\Tree(n)$ we have a family of arcs
\[\begin{tikzcd}[column sep=large]
        \S^{n(d-3)}\rar{\mu_{\Gamma,d}} & 
        \Emb(\S^1,\ball^d_n)\rar{\TG_n\circ-} & 
        \Emb(\S^1,M)\arrow{r}{-|_{\S^1_-}} &
        \Embp(\D^1,M).
    \end{tikzcd}
\]
All these arcs have the same endpoints as the subarc $\u|_{J_0}$ of $\u$, and are disjoint from $\u|_{C\sm J_0}$, so they can be used to replace $\u|_{J_0}$.
\begin{defn}[Grasper surgery]\label{def:knotted-family}
    We say that the family of embeddings
    \begin{align*}
       \u_{\TG,\Gamma^{g_{\ul{n}}}}\colon\S^{n(d-3)} & \to\Embp(C,M),\\ 
        \vec{\tau} & \mapsto\u|_{C\sm J_0}\cup(\TG_n\circ\mu_{\Gamma,d}(\vec{\tau}))|_{\S^1_-}
    \end{align*}
    is the \emph{result of surgery of type $\Gamma$ along the grasper $\TG_n$ on the trivial family $\const_\u$}.
    
    Moreover, we define a homomorphism 
    \[
    \realmap_n\colon
    \Z[\Tree_{\pi_1M}(n)]\ra\pi_{n(d-3)}\Embp(C,M)
    \]
    by linearly extending the map sending a decorated tree $\Gamma^{g_{\ul{n}}}\in\Tree_{\pi_1M}(n)$ to the homotopy class $[\u_{\TG,\Gamma^{g_{\ul{n}}}}]$,
    for any grasper $\TG_n$ of degree $n$ in $M$ relative to $\u$ and decorated by $g_{\ul{n}}$.
\end{defn}
\begin{rem}\label{rem:meridians-agree}
    Observe that the meridian balls $m\colon\bigvee_n\ball^{d-1}\hra\ball^d_n$ from~\eqref{eq-def:m} precisely become (based) meridians to the arc $\u$ at the points $\u(p_i)=\TG_n(a_i(0))$.
\end{rem}
    
\begin{rem}\label{rem:arc-to-circle}
    If $\u\colon\S^1\hra M$ and $\D^d\subseteq M$ is a small ball containing $\u|_{[R_\infty,L_0]}$, there is an inclusion
    \[
        \Embp(\D^1,M\sm\D^d)\hra\Emb(\S^1,M)
    \]
    which adds the ball together with $\u|_{[R_\infty,L_0]}$. Then graspers on the circle $\u$ clearly correspond to graspers on the arc $\u|_{[L_0,R_\infty]}$, so the realisation map factors as
    \[
    \realmap_n\colon
    \Z[\Tree_{\pi_1M}(n)]\ra\pi_{n(d-3)}\Embp(\D^1,M\sm\D^d)\ra\pi_{n(d-3)}\Emb(\S^1,M).\qedhere
    \]
\end{rem}
\begin{rem}\label{rem:nonconn-graspers}
    This construction clearly extends to any $1$-manifold $C$, that is, not necessarily connected. The grasper can have its root and leaves on various components.
\end{rem}

\subsection{Relation to gropes and claspers}\label{subsec:gropes}

    In dimension $d=3$ a grasper $\TG_n\colon\ball^3\hra M$ is essentially determined by a choice of a capped grope cobordism $\G\colon G_\Gamma^c\hra M$ relative to $\u$. Namely, $\TG_n$ is a tubular neighbourhood of $\G$ and the arc $a_i$ is dual to the $i$-th cap of $\G$, see Figure~\ref{fig:grope-thick}. 
    The map $\mu_{\Gamma,3}\colon\S^0\to\Emb(\S^1,\ball^3_n)$ sends the basepoint $e^+$ to $c_0$ and $e^-$ to the standard circle $\S^1=\partial\ball^2\subseteq\ball^d$. The surgery on the grasper is the knot 
    \[
        \u_{\TG,\Gamma^{g_{\ul{n}}}}\coloneqq\u|_{C\sm J_0}\cup(\TG_n\circ\mu_{\Gamma,d}(e^-)|_{\S^1_-},
    \]
    which is said to be Gusarov--Habiro $n$-equivalent to $\u$, and the string link $\TG(\bigsqcup_{i=1}^na_i)\subseteq\TG(\ball^3)$ represents the obstruction for this cobordism to be trivial, i.e.\ for $K$ to be isotopic to $\u$. When $M=\I^3$ this notion of $n$-equivalence agrees with the one of Vassiliev, namely that all finite type invariants of type $\leq n$ agree on $K$ and $\u$, as shown independently by Gusarov~\cite{Gusarov-main} and Habiro~\cite{Habiro}.
    
\begin{figure}[!htbp]
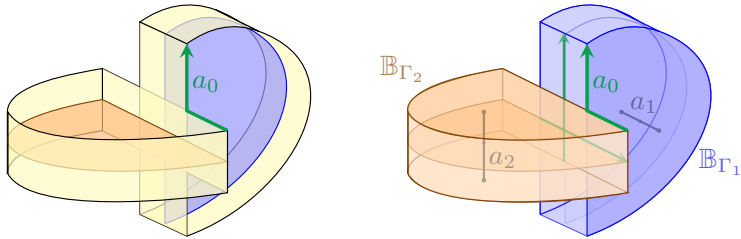

    \centering
    \includestandalone[mode=buildmissing,width=0.65\linewidth]{fig-knotted/fig-grope-thick}
    \caption{Thickening the abstract capped grope $G_\Gamma^c$ of degree $\deg\Gamma=2$ (on the left) gives the model ball $\ball^3_2$ (on the right).}
    \label{fig:grope-thick}
\end{figure}
    In fact, Gusarov and Habiro defined the $n$-equivalence relation in terms of claspers, whereas the interpretation in terms of gropes is due to Conant and Teichner~\cite{CT1}. A clasper is an embedding of a tree $\Gamma$ into the $3$-manifold $M$, with leaves on $\u$ and a thickening (framing) of all edges. It can also be embedded into $G_\Gamma^c$. Moreover, this tree describes the isotopy class rel.\ boundary of the string link $\bigsqcup_{i=1}^na_i$: it is obtained from the Hopf link by iterated Bing doubles according to $\Gamma$.
    
    Therefore, one can think of a grasper as a thickening of a clasper to $d\geq3$ dimensions, where the tree structure gets lost. However, the tree reappears in the family $\mu_{\Gamma,d}$ embedded into the grasper. 
    In other words, $\ball^d_n$ is equivalent to $\ball^3_\Gamma\times\ball^{d-3}$, the thickening of $G_\Gamma^c$ to $d$ dimensions, for any tree $\Gamma$ of degree $n$. However, in that viewpoint we see at best a $(d-3)$-family of $G_\Gamma^c$ inside of $\ball^d_n$, and not an $n(d-3)$-parameter family as one can in fact define, and which is related to our map $\mu^\star_{\Gamma,d}$ (cf.\ the plumbing construction in Definition~\ref{def:plumbing}).
    Nevertheless, we hope to explore in future work connections between our families and the more classical role of $G_\Gamma^c\times\ball^2$ in dimension $d=4$ (related to the failure of the Whitney trick and work of Freedman and Quinn).

\subsection{About the Haefliger trefoil}\label{subsec:Haefliger}

As mentioned in the introduction, our realisation maps $\realmap_n$ are generalisations to knotted families of the classical Borromean rings. The key property of this link that we use is the following: two components form an unlink, whereas the third component is homotopic to the commutator $[x_1,x_2]$ of their meridians $x_1$ and $x_2$. In fact, the complement of the 2-component string unlink in $\ball^3$ is homotopy equivalent to the wedge of two circles, so up to homotopy, the third component corresponds to the map $\S^1\to\S^1\vee\S^1$ given as the commutator.

On the other hand, for $d=3$ and the tree $\Gamma$ of degree $n=2$ the Samelson product \eqref{eq-def:x-Gamma} is a map 
\[
    x_{\Gamma,3}\colon\S^0\to\Omega(\S^1\vee\S^1).
\]
By definition, this sends the basepoint $e^+\in\S^0$ to the constant loop, and the remaining point $e^-\in\S^0$ to the commutator loop $[x_1,x_2]$ where $x_i$ are the inclusions of the two wedge factors. Thus, the Borromean rings can be viewed as an embedded commutator, or a geometric analogue of the basic Samelson product. This is generalised to all $d\geq3$ (and $n\geq1$) by our Theorem~\ref{thm:analogues}: the complement of  the $n$-component string unlink in $\ball^d$ is homotopy equivalent to $\bigvee_n\S^{d-2}$ and under this equivalence $\mu_{\Gamma,d}$ corresponds to $x_{\Gamma,d-2}$.

Furthermore, as we saw in Figure~\ref{fig:Borr} the trefoil knot is obtained by ambiently connect summing the unknot into the Borromean rings. Equivalently, the trefoil is obtained from $\u$ by grasper surgery of degree $2$. For higher $d$ we obtain analogues of the trefoil in $\pi_{2(d-3)}\Embp(C,M)$.

In fact, one can consider more general Samelson or Whitehead products, of spheres of varying dimensions. For example, in degree $n=2$ we have the Samelson product
\[
    [y_1,y_2]\colon\S^{2d-p-q-4}\to\Omega(\S^{d-p-1}\vee\S^{d-q-1}),
\]
whose value at $\vec{\tau}_1\wedge\vec{\tau}_2\in\I^{d-p-2}\times\I^{d-q-2}\subseteq\S^{2d-p-q-4}$ is the commutator of the loops $y_1(\vec{\tau}_1)$ and $y_2(\vec{\tau}_2)$, which respectively foliate the first and second wedge factors. The adjoint (cf.\ \eqref{eq:adjunction}) of this map is the Whitehead product $[y_1,y_2]\colon\S^{2d-p-q-3}\to\S^{d-p-1}\vee\S^{d-q-1}$,
which is geometrically realised by the following link. Consider the coordinates
\[
    (z_1,z_2,z_3)\in\R^d=\R^{d-p-1}\times\R^{d-q-1}\times\R^{p+q+2-d}
\]
and form the hyperellipsoids in each of the three coordinate subspaces:
\begin{align*}
   L_i\colon\quad z_i=0,\quad |z_{i+1}|^2+\tfrac{1}{4}|z_{i+2}|^2=1,
\end{align*}
where $i=1,2,3$, and we consider $i+2$ modulo $3$. This forms Haefliger's Borromean rings~\cite{Haefliger}:
\[
    L\colon\S^p\sqcup\S^q\sqcup\S^{2d-p-q-3}\hra\R^d,
\]
Haefliger first used this link in the case $d=6k$ and $p=q=4k-1$ in order to construct a generator of $\Emb(\S^{4k-1},\S^{6k})$ or $\Embp(\D^{4k-1},\D^{6k})$, by an ambient connect sum analogous to the one for the classical trefoil. Sakai~\cite{Sakai-Haefliger-invt} defined finite type invariants of such knots and showed that the Haefliger trefoil is detected by a type 2 invariant; an invariant can be defined by foliating by arcs and then taking $\ev_2\colon\pi_{4k-2}\Embp(\D^1,\D^{2k+2})\to\Z$, see  \eqref{eq:Haefliger-foliate}.

\section{Multi-families of arcs}\label{sec:multi-family}

In Section~\ref{subsec:plumbings} we define plumbings and embedded commutators.
In Section~\ref{subsec:main-construction} for a given tree $\Gamma$ of degree $n$ we find inside of $\ball^d_n$ an $\I^n\times\S^{n(d-3)}$-parameter family $\mu^\star_{\Gamma,d}$ of arcs intersecting $a_i$ in a particular pattern. We define $\mu_{\Gamma,d}$ as a restriction of $\mu^\star_{\Gamma,d}$.

\subsection{Plumbings}\label{subsec:plumbings}

For the purpose of constructing multi-families of circles $\mu^\star_{\Gamma,d}$ in $\ball^d$, we will use the standard construction called \emph{plumbing}, see Definition~\ref{def:plumbing}. In essence, this glues together two $d$-balls along some thickened squares in their boundaries using the swap map $(s,t)\mapsto(t,s)$. The result is just a $d$-ball again, parameterised in a particular (non-smooth) way. We will fix a homeomorphism to the standard ball, to which we can glue the extension to get $\ball^d_{ext}$ again, see Definition~\ref{def:plumbing-ext}. In our inductive procedure defining $\mu^\star_{\Gamma,d}$ we will then iterate this, following the shape of a tree $\Gamma$, and we will also perform iterated embedded commutators, see Definition~\ref{def:emb-commutator}.

\begin{defn}[Plumbing]\label{def:plumbing}
    Fix $d\geq3$ and let $\ball^{(j)}\cong\ball^d_{ext}$ for $j=1,2$ be two copies of the extended $d$-ball (as in Section~\ref{subsec:model-ball}). We define the \emph{plumbing of $\ball^{(1)}$ and $\ball^{(2)}$} as the quotient
        \[
            \ball^{(1)}\natural\,\ball^{(2)}\coloneqq \faktor{\ball^{(1)}\sqcup\ball^{(2)}}{
            (\vec{x},1,s,t)\sim (\vec{x},1,t,s)\text{ for }\vec{x}\in\ball^{d-3},s,t\in\D^1}\quad.
        \]
    The resulting space is homeomorphic to a $d$-ball, see Figure~\ref{fig:plumbing} for $d=3$. We fix now once and for all a homeomorphism  to the standard smooth ball:
    \[
        \xi\colon\ball^{(1)}\natural\,\ball^{(2)}\to\ball^d
    \]
    In order to specify $\xi$ we think of $\ball^{(1)}\natural\,\ball^{(2)}$ as sitting inside of $\R^d$ and with the origin $O$ as $a_0^{(1)}\cap a_0^{(2)}$ (for example, perform the ambient gluing of the standardly embedded extensions and balls, as in Figure~\ref{fig:plumbing}), and we take $\ball^d\subseteq\R^d$ to be a ball of large enough radius centred at $O$ and containing $\ball^{(1)}\natural\,\ball^{(2)}$. Let $\xi$ be the radial stretching: $O$ is fixed, and on each half-line $H$ starting at $O$ apply the stretching that sends $H\cap(\ball^{(1)}\natural\,\ball^{(2)})$ to $H\cap\ball^d$.
\end{defn}
\begin{figure}[!htbp]
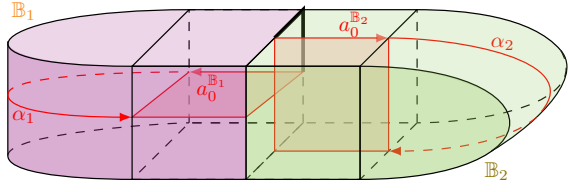

    \centering
    \includestandalone[mode=buildmissing,width=0.5\linewidth]{fig-knotted/fig-plumbing}
    \caption{The plumbing of two extended $3$-balls. The new extension will be glued onto the arc in bold.}
    \label{fig:plumbing}
\end{figure}
\begin{defn}[Extended plumbing]\label{def:plumbing-ext}
    We define $(\ball^{(1)}\natural\,\ball^{(2)})_{ext}$ by gluing a copy of $\ball^{d-3}\times(\D^1)^3$ to the $d$-ball $\xi(\ball^{(1)}\natural\,\ball^{(2)})$ so that $\ball^{d-3}\times-1\times\D^1\times\D^1$ is identified with a tubular neighbourhood of the arc obtained by applying $\xi$ to the union of the two arcs $\vec{0}\times1\times[0,1]\times1\subseteq\partial\ball^{(j)}$ for $j=1,2$ (drawn in bold in Figure~\ref{fig:plumbing}). In particular, we have the new $a_0\subseteq\partial(\ball^{(1)}\natural\,\ball^{(2)})_{ext}$. 
    
    Moreover, if $\ball^{(j)}_{n_j}=(\ball^{(j)}, a_0^{(j)}, \bigsqcup_{i=1}^{n_j}a_i^{(j)})$ are model balls of degrees $n_1,n_2\geq 1$, then we define the \emph{plumbed model ball} of degree $n\coloneqq n_1+n_2$ as the tuple
    \[
        \Big(
            (\ball^{(1)}\natural\,\ball^{(2)})_{ext},\; 
            a_0,\;
            \bigsqcup_{i=1}^{n_1}a_i^{(1)} 
            \sqcup\bigsqcup_{i=1}^{n_2}a_i^{(2)}
        \Big).\qedhere
    \]
\end{defn}

\begin{defn}[Embedded commutator]\label{def:emb-commutator}
    Fix $d\geq3$ and let $\ball^{(j)}\cong\ball^d_{ext}$ for $j=1,2$ be two copies of the extended $d$-ball, with $a_0^{(j)}\subseteq A_0^{(j)}\subseteq\partial\ball^{(j)}$ as in \eqref{eq-def:a-0}--\eqref{eq-def:A-0}. 
    Given some smoothly embedded annuli $C_j\colon\S^1\times[-\e,\e]\hra\ball^{(j)}$ for $j=1,2$, $\e>0$, such that $C_j|_{\S^1_+\times[-\e,\e]}=A_0^{(j)}$,
    we define their \emph{embedded commutator}
    \[
        [C_1,C_2]_\natural \colon\,
        \S^1\times[-\e,\e]
        \hra(\ball^{(1)}\natural\,\ball^{(2)})_{ext}.
    \]
    First, consider the two ``push-offs'', in the negative and positive $\e$-directions, of the nontrivial half of $C_j$, namely the strips 
    $A_j^-\coloneqq C_j|_{\S^1_-\times[-\e,-\frac{\e}{2}]}$ and $A_j^+\coloneqq C_j|_{\S^1_-\times[\frac{\e}{2},\e]}$.

    Second, let $e_i\subseteq\partial(\ball^{(1)}\natural\,\ball^{(2)})$ for $1\leq i\leq 5$ be the arcs in the two glued extensions as in Figure~\ref{fig:plumbing-commutator}.
    Let $E_i$ be the strip neighbourhood of the arc $e_i$ such that the ends $\partial E_i$ can glue to the appropriate $A_j^+$ or $A_j^-$. We glue the strips along their normal directions to define
    \[
        [C_1,C_2]_\natural\coloneqq
        \xi(A_0\cdot E_1\cdot
        A_1^+\cdot E_2\cdot A_2^+\cdot  E_3\cdot (A^-_1)^{-1}\cdot  E_4\cdot (A_2^-)^{-1}\cdot  E_5),
    \]
    where the inverse means the opposite orientation in the arc coordinate. We parameterise this so that it is defined on $\S^1\times[-\e,\e]$ and satisfies $[C_1,C_2]_\natural|_{\S^1_+\times[-\e,\e]} =A_0\subseteq\partial(\ball^{(1)}\natural\,\ball^{(2)})_{ext}$.
\end{defn}
\begin{figure}[!htbp]
    \centering
    \includestandalone[mode=buildmissing,width=0.57\linewidth]{fig-knotted/fig-plumbing-commutator}
    \caption{The embedded commutator of the circles $c_j\coloneqq C_j|_{\S^1\times0}$, where $a_j^\pm=C_j|_{\S^1_-\times\pm\e}$ and $d=3$.}
    \label{fig:plumbing-commutator}
\end{figure}

\begin{rem}\label{rem:smooth-emb-comm}
    Let us give a few important remarks about this construction.
\begin{enumerate}
    \item 
        For simplicity in Figure~\ref{fig:plumbing-commutator}  we drew $\ball^{(1)}\natural\,\ball^{(2)}$ instead of $\xi(\ball^{(1)}\natural\,\ball^{(2)})\subseteq(\ball^{(1)}\natural\,\ball^{(2)})_{ext}$.
    \item 
        Also for simplicity we drew only $[C_1,C_2]_\natural|_{\S^1\times0}=[c_1,c_2]_\natural$, the commutator of the circles $c_j\coloneqq C_j|_{\S^1\times0}=a_0^{(j)}\cdot \alpha_j$, where $\alpha_j$ are as in Figure~\ref{fig:plumbing}.
    \item 
        To obtain the annulus $[C_1,C_2]_\natural$ from $[c_1,c_2]_\natural$, each $\alpha$-curve should be thickened into the $\e$-direction (for $\alpha_1$ this is up/down, for $\alpha_2$ this is forward/backward), and each $e$-curve should receive a right-handed half-twist. 
    \item
        Note that $[c_1,c_2]_\natural$ is homotopic to the commutator $c_1c_2c_1^{-1}c_2^{-1}$.
    \item
        Note that the curve $[c_1,c_2]_\natural$ is smooth in $\ball^d_{ext}\subseteq\R^d$, apart from the corners in $a_0$. Therefore, we can use Remark~\ref{rem:smooth-a0} to view it as an element in
        \[
         [c_1,c_2]_\natural\in\Emb((\S^1,\S^1_+),(\ball^d_n,a_0)).
        \]
    \item
        On the other hand, $[C_1,C_2]_\natural$ has ``ridges'', which needs to be straightened-out by $\xi$.
    \item
        Finally, we remark that although in the figure we used arcs $\alpha_j$ which are in the boundary of $\ball^{(j)}$, in general the embedded commutator construction will use annuli $C$ which will consist of $C(\S^1_+)=A_0$ and a neatly embedded annulus $C(\S^1_-)\subseteq \ball^d$. Thus, one should image that depicted are only some pieces of the balls $\ball^{(j)}$.
        \qedhere
\end{enumerate}
\end{rem}

\subsection{The main construction}\label{subsec:main-construction}

Recall from Section~\ref{subsec:notation-other} that we use notation $\vec{t}=(t_1,\dots,t_n)\in\I^n$ and $\vec{\tau}=\tau_1\wedge\dots\wedge\tau_n\in(\S^{d-3})^{\wedge n}\cong\S^{n(d-3)}$. The following is our main construction.

\begin{thm}\label{thm:multi-family}
    For any $d\geq3$ and $\Gamma\in\Tree(n)$ with $n\geq1$ the map
    \begin{equation}\label{eq-def:ol-mu}
        \mu^\star_{\Gamma,d}\colon\quad
        \I^n\times\S^{n(d-3)}\to \Emb\big((\S^1,\S^1_+),(\ball^d_{ext},a_0)\big),
    \end{equation}
    defined in Definitions~\ref{def:base}--\ref{def:partIV} satisfies the following properties.
    \begin{enumerate}
        \item
    For $\vec{\tau}=e^+$ we have the trivial family of circles:
    \[
        \mu^\star_{\Gamma,d}(-,e^+)=\const_{c_0}.
    \]
        \item
    For $\vec{t}\in\I^n$, if $t_i=1$ for some $1\leq i\leq n$, then we have the trivial family of circles:
    \[
        \mu^\star_{\Gamma,d}(\vec{t},-)=\const_{c_0}.
    \]
        \item
    For $1\leq i\leq n$ the intersection of the arc $a_i(\D^1)\subseteq\ball^d_{ext}$ and the circle $\mu^\star_{\Gamma,d}(\vec{t},\vec{\tau})\subseteq\ball^d_{ext}$ is
    \[
    a_i(\D^1) \cap \mu^\star_{\Gamma,d}(\vec{t},\vec{\tau})=\begin{cases}
        \{a_i(0)\}, & t_i=\tfrac{1}{2},\;\tau_i=e^-,\;\text{and }t_j\leq\tfrac{3}{4}\text{ for all } 1\leq j\leq n,\\
        \emptyset, & \text{otherwise}.
    \end{cases}
    \]
\end{enumerate}
\end{thm}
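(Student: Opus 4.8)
The plan is to construct $\mu^\star_{\Gamma,d}$ by induction on the degree $n=\deg\Gamma$, following the recursive structure $\Gamma=[\Gamma_1,\Gamma_2]$ with $\deg\Gamma_j=n_j$ and $n_1+n_2=n$. The base case $n=1$, where $\Gamma$ is a single chord, is handled by hand: there $\mu^\star_{1,d}=\mu^\star_d$ is exactly the modified foliation of the meridian ball $\ball^{d-1}\subseteq\ball^d$ constructed in \eqref{eq-def:mu-star}, with the single arc $a_1$ being the $x_d$-axis of \eqref{eq:a-1} whose meridian sphere is $\partial\tfrac12\ball^{d-1}$. One then checks directly from the description of $\mu^\star_d$ in Section~\ref{subsec:foliations} that properties (1)--(3) hold: at $\tau_1=e^+$ the foliation is constant at $c_0$; at $t_1=1$ it is constant at $c_0$ by the endpoint condition $\mu^\star_d(1,-)=\const_{c_0}$; and the only leaf passing through $a_1(0)$ is the equatorial $(d-2)$-sphere of the meridian, which is the leaf over $\tau_1=e^-$ at parameter $t_1=\tfrac12$.

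For the inductive step, I would take the two families $\mu^\star_{\Gamma_1,d}\colon\I^{n_1}\times\S^{n_1(d-3)}\to\Emb((\S^1,\S^1_+),(\ball^{(1)}_{ext},a_0^{(1)}))$ and $\mu^\star_{\Gamma_2,d}$ landing in $\ball^{(2)}_{ext}$, thicken each circle $\mu^\star_{\Gamma_j,d}(\vec{t}^{(j)},\vec{\tau}^{(j)})$ to an annulus $C_j$ agreeing with $A_0^{(j)}$ on the northern semicircle (using the strip/annulus neighbourhoods $A_0$, $C_0$ fixed in Section~\ref{subsec:model-ball}), and then apply the embedded commutator $[C_1,C_2]_\natural$ of Definition~\ref{def:emb-commutator} inside the extended plumbing $(\ball^{(1)}\natural\,\ball^{(2)})_{ext}$, which by Definition~\ref{def:plumbing-ext} carries the plumbed model ball of degree $n$, identified with $\ball^d_n$ up to isotopy as noted after \eqref{eq-def:ball-d-n}. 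The parameter space is assembled via the smash decomposition \eqref{eq:smash}: $\S^{n(d-3)}\cong\S^{n_1(d-3)}\wedge\S^{n_2(d-3)}$, and the cube $\I^n=\I^{n_1}\times\I^{n_2}$ splits accordingly, so $\mu^\star_{\Gamma,d}(\vec{t},\vec{\tau})$ is defined by feeding the two halves of the coordinates into the two subfamilies before taking $[-,-]_\natural$. (One should be slightly careful: the commutator needs to degenerate to $c_0$ when \emph{either} input does, so at the wedge point of the smash one must use that $[C_1,\const_{c_0}]_\natural=[\const_{c_0},C_2]_\natural=c_0$, which follows because the commutator word $A_0E_1A_1^+E_2A_2^+E_3(A_1^-)^{-1}E_4(A_2^-)^{-1}E_5$ collapses to $A_0$ when the nontrivial halves $A_j^\pm$ are trivial — this needs the $E_i$-strips to cancel in pairs, which I would arrange in the parametrisation.)

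The verification of properties (1)--(3) is then a bookkeeping argument using the inductive hypotheses. Property (1): if $\vec\tau=e^+$, then under the smash decomposition both $\vec\tau^{(1)}$ and $\vec\tau^{(2)}$ are basepoints (or at least one is, at the wedge point), so by hypothesis $\mu^\star_{\Gamma_j,d}(-,e^+)=\const_{c_0}$ for the relevant $j$, and the commutator of (at least one) trivial annulus is $c_0$. Property (2): if some $t_i=1$, that coordinate lies in one of the two sub-cubes, say $\vec t^{(1)}$, so $\mu^\star_{\Gamma_1,d}(\vec t^{(1)},-)=\const_{c_0}$ by hypothesis, and again $[\const_{c_0},C_2]_\natural=c_0$. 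Property (3): the arcs $a_i$ for $i$ belonging to the leaf-set $S_1$ of $\Gamma_1$ live in $\ball^{(1)}$, those for $i\in S_2$ live in $\ball^{(2)}$, and the commutator construction inserts only translated/pushed-off copies $A_j^\pm$ of the \emph{southern} halves of the subfamily circles together with the strips $E_i$, which lie in the extensions and miss all the $a_i$; so $a_i(\D^1)\cap\mu^\star_{\Gamma,d}(\vec t,\vec\tau)$ is governed entirely by the corresponding intersection for $\mu^\star_{\Gamma_j,d}$, plus the constraint that the push-offs $A_j^\pm$ are \emph{disjoint} from $a_i$ (one arranges the tubular neighbourhoods small enough), which is what forces the extra condition ``$t_k\le\tfrac34$ for all $k$'': the value $\tfrac34$ is the threshold past which a push-off of a subfamily circle would be dragged out toward $c_0$ and cease to meet $a_i$. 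I expect the main obstacle to be precisely this last point — choosing the parametrisation of $\mu^\star_{\Gamma_j,d}$ and the push-off conventions so that the intersection pattern with \emph{all} the arcs $a_1,\dots,a_n$ simultaneously matches the clean statement in (3), rather than some messier degree-dependent condition; this requires committing, once and for all, to compatible conventions for the $\tfrac12$- and $\tfrac34$-thresholds across the whole inductive family and checking that the embedded commutator does not introduce new or destroy existing intersection points. Everything else (that the result is an honest smooth embedding away from the corners of $a_0$, in the sense of Remark~\ref{rem:smooth-a0} and Remark~\ref{rem:smooth-emb-comm}, and that it is continuous in all parameters) is routine given the care already built into Definitions~\ref{def:plumbing}--\ref{def:emb-commutator}.
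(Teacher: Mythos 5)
Your overall strategy coincides with the paper's: induction over the tree, base case given by the foliation $\mu^\star_d$ of the meridian ball, and the inductive step performed by the embedded commutator $[C_1,C_2]_\natural$ of the thickened subfamilies inside the extended plumbing. The genuine gap is the parenthetical step on which everything else rests: the claim that one can parametrise things so that $[C_1,\const_{c_0}]_\natural=[\const_{c_0},C_2]_\natural=c_0$ \emph{on the nose}, because "the $E_i$-strips cancel in pairs''. This cannot be arranged. Even when one input is the trivial annulus $C_0$, its "nontrivial half'' $C_0|_{\S^1_-\times[-\e,\e]}$ is a genuine strip in $\partial\ball^d_{ext}$, and the commutator word still contains two \emph{disjoint} parallel push-off copies ($A_j^+$ and $(A_j^-)^{-1}$) of the southern half of the other, nontrivial, circle; cancelling a subword of the form $\alpha\alpha^{-1}$ is an isotopy statement, not an equality of embedded curves, so the resulting embedded circle is only \emph{isotopic} to $c_0$, never equal to it. Consequently your recipe does not give a family that is constant over the wedge $\S^{n_1(d-3)}\times e^+\vee e^+\times\S^{n_2(d-3)}$, hence does not descend to $\I^n\times\S^{n(d-3)}$ at all, and your verifications of (1) and of the wedge-point coherence fall with it.

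This is precisely the point the paper spends most of its construction on. Part I only defines the commutator family on the product $\I^{n_1}\times\I^{n_2}\times\S^{n_1(d-3)}\times\S^{n_2(d-3)}$; Parts II--III then build an explicit isotopy $h_{\Gamma,d}$ (and its annular thickening $h^\e_{\Gamma,d}$) from the restriction over the wedge to $\const_{c_0}$, using the embedded strip bounded by the push-offs together with the sides of the extensions, including an \emph{interpolating} isotopy on the locus $\vec{\tau}_1=\vec{\tau}_2=e^+$ where the two natural isotopies disagree (Figure~\ref{fig:plumbing-interpolating}); Part IV glues this isotopy onto a collar of $\I_1\times\I_2$ so that the map becomes constant on the boundary and only then defines \eqref{eq-def:mu-star-e} on $\I^n\times\S^{n(d-3)}$. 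This extra data is not cosmetic: the verification of (iii) must additionally check that the inserted isotopies never meet the arcs $a_i$, which the paper does by observing that they are supported in the leftover parts of the annuli $C_j$ (thin push-offs of $\mu_d$ inside $\partial\ball^d$) and in the extension strips. Relatedly, your heuristic for the threshold condition $t_j\leq\tfrac{3}{4}$ ("the push-off is dragged out toward $c_0$'') is not how the paper argues: the condition is built into the base-case foliation $\mu^\star_d$ from \eqref{eq-def:mu-star} and propagates through the induction because each $a_j$ lies in exactly one plumbing factor and $h_{\Gamma,d}$ avoids all the $a_i$. (A minor slip: in your base case the relevant leaf through $a_1(0)$ is the single circle $\mu^\star_d(\tfrac12,e^-)$ sweeping through the centre of the meridian ball, not an "equatorial $(d-2)$-sphere''.)
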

In particular, note that $(iii)$ implies that restricting to $\vec{t}=\vec{0}=(0,\dots,0)\in\I^n$ defines a family
    \begin{equation}\label{eq-def:mu-Gamma}
        \mu_{\Gamma,d}
        \coloneqq
        \mu^\star_{\Gamma,d}(\vec{0},-)\colon\quad
        \S^{n(d-3)}\to \Emb\big((\S^1,\S^1_+),(\ball^d\sm\bigsqcup_{i=1}^na_i,a_0)\big),
    \end{equation}
The definition of  $\mu^\star_{\Gamma,d}$ will be by induction and in several steps, Definitions~\ref{def:base}--\ref{def:partIV}. First we define a map from $\I^n\times \S^{n_1(d-3)}\times \S^{n_2(d-3)}$ into a space of embedded annuli. Then for its restriction to the subspace $\I^n\times(\S^{n_1(d-3)}\times e^+ \vee e^+ \times\S^{n_2(d-3)})$ we find a homotopy (isotopy) to $\const_{c_0}$, so that adding this on a collar gives a map out of the space $\I^n\times(\S^{n_1(d-3)}\wedge\S^{n_2(d-3)})=\I^n\times\S^{n(d-3)}$.

In other words, this gives a multi-family of annuli:
        \begin{equation}\label{eq-def:mu-star-e}
            \mu^{\star,\e}_{\Gamma,d}\colon\quad
            \I^n\times \S^{n(d-3)}\ra
            \Emb\big(
                (\S^1\times[-\e,\e],
                \S^1_+\times[-\e,\e]),
                (\ball^d_{ext},A_0)\big).
       \end{equation}
such that $\mu^{\star,\e}_{\Gamma,d}(-,e^+)=\const_{C_0}$.
Finally, we will restrict to the core $\S^1\times0\subseteq\S^1\times[-\e,\e]$ to obtain
        \[
            \mu^\star_{\Gamma,d}\colon\quad
            \I^n\times \S^{n(d-3)}\ra 
            \Emb((\S^1,\S^1_+),(\ball^d_{ext},a_0)),
        \]
for which we check the desired properties of the theorem. The purpose of using annuli is to simplify the inductive step, since for the embedded commutators (from Definition~\ref{def:emb-commutator}) we need the $\e$-direction for pushing circles off themselves.
\begin{defn}[Induction base]\label{def:base}
    For $n=1$ we define 
        \begin{equation}\label{eq-def:ol-mu-1}
            \mu^{\star,\e}_{|,d}\colon\I\times\S^{d-3}\to \Emb(\S^1\times[-\e,\e],\ball^d_{ext})
        \end{equation}
    using $\mu^\star_d\colon\I\times\S^{d-3}\to \Emb(\S^1,\ball^{d}_{ext})$ from~\eqref{eq-def:mu-star} and thickening all the circles to annuli using the normal direction to $\ball^{d-1}\subseteq\ball^d\subseteq\ball^d_{ext}$ (so $x_d$-axis, parallel to $a_1$, see~\eqref{eq:a-1}).
    In particular, for $n=1$ we will simply have $\mu^\star_{|,d}=\mu^\star_d$ from~\eqref{eq-def:mu-star}.
\end{defn}

\begin{defn}[Part I -- commutators]
    Assuming now that $\mu^{\star,\e}_{\Gamma,d}$ is defined for trees $\Gamma$ with $<n$ leaves, for some $n\geq1$, let $\Gamma=[\Gamma_1,\Gamma_2]$ be a tree with $\deg(\Gamma)=n=n_1+n_2$ leaves, for $\Gamma_j\in\Tree(S_j)$ and $n_j=|S_j|$, where $S_1\sqcup S_2=\ul{n}$.
    By the induction hypothesis for $\vec{t}_j\in\I^{n_j}$, $\vec{\tau}_j\in\S^{n_j(d-3)}$ we have
    \[
        C_j\coloneqq\mu^{\star,\e}_{\Gamma_j,d}(\vec{t}_j,\vec{\tau}_j)\colon\;
        \S^1\times[-\e,\e]\hra\ball^{(j)}
    \]
    such that $C_j|_{\S^1_+\times[-\e,\e]}=A_0^{(j)}$. We can thus put
    \[
        \mu^{\star,\e}_{\Gamma,d}(\vec{t}_1\times\vec{t}_2,\vec{\tau}_1\times\vec{\tau}_2)
        \coloneqq [C_1,C_2]_\natural\colon\S^1\times[-\e,\e]\hra (\ball^{(1)}\natural\,\ball^{(2)})_{ext},
    \]
    where the target is the extended plumbing from Definition~\ref{def:plumbing-ext}. 
    This defines a continuous map on $\I^{n_1}\times\I^{n_2}\times\S^{n_1(d-3)}\times\S^{n_2(d-3)}$. Indeed, $\mu^{\star,\e}_{\Gamma_j,d}$ are continuous, and the plumbing is performed independently of the parameters. Moreover, by the definition of the embedded commutator the restriction of this to $\S^1_+\times[-\e,\e]$ is $A_0$.
    Finally, this is indeed a smooth embedding, with the caveat that $A_0$ has corners, see Remark~\ref{rem:smooth-a0}.
\end{defn}
\begin{figure}[!htbp]
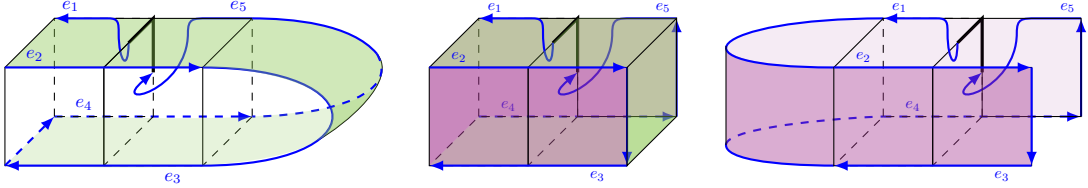

    \centering
    \includestandalone[mode=buildmissing,width=0.97\linewidth]{fig-knotted/fig-plumbing-interpolating}
    \caption{In the isotopy $h_{\Gamma,d}$ for $\vec{\tau}_1=e^+$ we use the green strip as in the leftmost picture, in order to homotope its boundary onto the bold arc $a_0$. In the isotopy for $\vec{\tau}_2=e^+$ we use the purple strip as in the rightmost picture, in order to homotope its boundary onto the bold arc $a_0$. When $\vec{\tau}_1=\vec{\tau}_3=e^+$ we have the situation as in the middle picture, where we can run both isotopies in parallel.}
    \label{fig:plumbing-interpolating}
\end{figure}
\begin{defn}[Part II -- isotoping circles]
    We define an isotopy
    \[
        h_{\Gamma,d}\colon[0,1]\times\I^n\times(\S^{n_1(d-3)}\times e^+  \vee e^+ \times\S^{n_2(d-3)})\ra \Emb((\S^1,\S^1_+),(\ball^d_{ext},a_0)),
    \]
    from the restriction $\mu^\star_{\Gamma,d}$ of $\mu^{\star,\e}_{\Gamma,d}$ (both to the wedge and the annulus core) to $\const_{c_0}$.  Note that of course any two circles in $\ball^d$ are based isotopic; however, we want to make sure that this forms a continuous family, and that in our isotopy all circles satisfy the conditions of Theorem~\ref{thm:multi-family}.
    
    First consider $\vec{\tau}_1=e^+$ and $\vec{\tau}_2\in \S^{n_1(d-3)}$, so  $\mu^\star_{\Gamma,d}(\vec{t}_1\times\vec{t}_2,e^+\times\vec{\tau}_2)$ is the commutator $[c_0^{\ball^{(1)}},c_2]_\natural$ by the induction hypothesis.
Note that $[c_0^{\ball^{(1)}},c_2]_\natural$ is the boundary of the embedded strip $\D^1\times[0,1]$ that consists of $A_2$ and the sides of the extensions of the cubes $\ball^{(j)}$, see the left hand side of Figure~\ref{fig:plumbing-interpolating}. Note that we have not drawn the strips in Figure~\ref{fig:plumbing-interpolating} so that $e_1$ and $e_5$ are in their boundaries; this is only due to our inability to draw and we must leave this to reader's imagination.
    
    There is an obvious isotopy of this strip that pulls the lower left horizontal edge $(c_0^{\ball^{(1)}})^-$ to the right and up, and thus isotopes $[c_0^{\ball^{(1)}},c_2]_\natural$ to $c_0^{\ball^{(1)}\natural\,\ball^{(2)}}$. We do the analogous isotopy for $\vec{\tau}_2=e^+$, by pulling the front right vertical edge to the left and back, see the right hand side of Figure~\ref{fig:plumbing-interpolating}.
    
    Finally, for $\vec{\tau}_1=\vec{\tau}_2=e^+$ the isotopy is not the same in the two cases. However, there is an isotopy interpolating between them, that runs both isotopies in parallel, cf.\ the middle of Figure~\ref{fig:plumbing-interpolating}. We omit writing this down explicitly.
\end{defn}

\begin{defn}[Part III  -- isotoping annuli]
    We extend the isotopy $h_{\Gamma,d}$ of the last definition to an isotopy $h_{\Gamma,d}^\e$ of $\S^1\times[-\e,\e]$, simply by observing that the strips that we use for isotoping are built out of 1) pieces of our starting annuli $C_j$, and 2) the neighbourhoods $E_i$ of the edges of the two extended cubes. Once we pick appropriate isotopies of $E_i$, we get a continuous family $h_{\Gamma,d}^\e$.
\end{defn}

\begin{defn}[Part IV -- putting all together]\label{def:partIV}
    Finally, we define the desired map~\eqref{eq-def:mu-star-e} by gluing the isotopy $h_{\Gamma,d}^\e$ from Part III to the map $\mu^{\star,\e}_{\Gamma,d}$ from Part I. More precisely, let $\I_j\coloneqq\I^{n_j(d-3)}$ and $\I_{j,\e}\coloneqq[0,1+\e]^{n_j(d-3)}$, and pick a homeomorphism $\S^{n_j(d-3)}\cong \I_j/\partial$. We define a map on $\I^n\times\I_{1,\e}\times \I_{2,\e}$ by applying $\mu^{\star,\e}_{\Gamma,d}$ on $\I^n\times \I_1\times \I_2$ and $h_{\Gamma,d}^\e$ on $\I^n\times(\I_{1,\e}\times \I_{2,\e}\sm \I_1\times\I_2)$, using the homotopy coordinate as the radial coordinate. The resulting map is constant on the boundary, so defines a map out of $\I^n\times\S^{n(d-3)}$, see~\eqref{eq:smash}.
\end{defn}
\begin{proof}[Proof of Theorem~\ref{thm:multi-family}]
    Let us now verify the properties of $\mu^\star_{\Gamma,d}$ claimed in Theorem~\ref{thm:multi-family}.

    The condition  $(i)$, that $\mu^\star_{\Gamma,d}(\vec{t},\vec{\tau})=\const_{c_0}$ if $\vec{\tau}=e^+$, is fulfilled by construction, since this is the value of $h_{\Gamma,d}$ on $\I^n\times\partial(\I_{1,\e}\times \I_{2,\e})$, equivalently $\{1\}\times\I^n\times(\S^{n_1(d-3)}\times e^+  \vee e^+ \times\S^{n_2(d-3)})$.

    For the condition $(ii)$, we need to show that if $t_i=1$ for some $1\leq i\leq n$ then $\mu^\star_{\Gamma,d}(\vec{t},\vec{\tau})=\const_{c_0}$. Note that for $n=1$ this holds by \eqref{eq-def:mu}. Therefore, the second condition holds by induction.
    
    For the last claim $(iii)$, we have to show that the circle $\mu^\star_{\Gamma,d}(\vec{t},\vec{\tau})$ intersects the neat arc $a_i\subseteq\ball^d$ if and only if $t_i=\tfrac{1}{2}$ and $\tau_i=e^-$ and $t_j\leq \frac{3}{4}$ for all $1\leq j\leq n$, and in that case the intersection is a single point, the midpoint $a_i(0)$. This follows by induction. For the base case $n=1$ this is true by \eqref{eq:a-1} and \eqref{eq-def:mu-star}. For the induction step simply use the fact that each $a_j$ belongs to exactly one of the balls $\ball^{(1)}$ or $\ball^{(2)}$ in the plumbing $\ball^d_{ext}=(\ball^{(1)}\natural\,\ball^{(2)})_{ext}$, and that the isotopies $h_{\Gamma,d}$ do not intersect any $a_i$. Namely, they use the remaining parts of the annuli $C_i$, and these were obtained by small thickening of the $i$-th copy of $\mu_d$ inside of $\partial\ball^d$ (in the direction normal to $\S^{d-2}\subseteq\partial\ball^d$).
\end{proof}

\section{Relation to Samelson products}\label{sec:relation}


As mentioned in the introduction, our families $\mu_{\Gamma,d}$ and $\mu^\star_{\Gamma,d}$ of embedded circles are geometric analogues of certain Samelson (or Whitehead) products, namely $x_{\Gamma,d-2}$ and $x^\star_{\Gamma,d-1}$. After defining these maps in Section~\ref{subsec:Samelson}, we show in Section~\ref{subsec:analogue} that $\mu^\star_{\Gamma,d}$ is homotopic to $x^\star_{\Gamma,d-1}$ (when both are included into an appropriate target), whereas $\mu_{\Gamma,d}$ is homotopic to $x_{\Gamma,d-2}$.

\subsection{Homotopy theoretic families: the Samelson products}\label{subsec:Samelson}

As before, fix $d\geq3$. Recall from Section~\ref{subsec:trees} that a tree $\Gamma\in\Tree(n)$, for some $n\geq1$, corresponds to a bracketed word in which each letter $\mathsf{x}^i$ for $1\leq i\leq n$  appears exactly once. We will now replace the letter $\mathsf{x}^i$ by the map
\begin{equation}\label{eq-def:x-i}
    x^i_{d-2}\colon\S^{d-3}\to\Omega\bigvee_n\S^{d-2}.
\end{equation}
This is defined as the map $x_{d-2}\colon\S^{d-3}\to\Omega\S^{d-2}$ from~\eqref{eq-def:x} (the adjoint of the identity map on $\S^{d-3}$), followed by the loop map of the inclusion $\S^{d-2}\hra\bigvee_n\S^{d-2}$ of the $i$-th wedge summand. Moreover, we replace the bracket by the Samelson bracket, defined as follows.

\begin{defn}
    For a tree $\Gamma\in\Tree(n)$ we define inductively in $n\geq1$ the Samelson product of the maps $x^i_{d-2}$ from \eqref{eq-def:x-i} as a map
    \begin{equation}\label{eq-def:x-Gamma}
        x_{\Gamma,d-2}\colon\S^{n(d-3)}\to\Omega\bigvee_n\S^{d-2}.
    \end{equation}
    For $n=1$ there is only one tree $\Gamma=|$ and 
    \[
        x_{|,d-2}\coloneqq x^1_{d-2}=x_{d-2}.
    \]
    For the induction step, let $\Gamma=[\Gamma_1,\Gamma_2]$ be a tree with $\deg(\Gamma)=n=n_1+n_2$ leaves, for $\Gamma_j\in\Tree(S_j)$ and $n_j=|S_j|$, where $S_1\sqcup S_2=\ul{n}$. Let us denote $\I_j\coloneqq\I^{n_j(d-3)}$ and fix a homeomorphism $\S^{n(d-3)}\cong\S^{n_1(d-3)}\wedge\S^{n_2(d-3)}\cong\faktor{\I_1\times\I_2}{\partial}$, as in~\eqref{eq:smash}.
    Then for $\vec{\tau}_j\in\I_j$ we let
    \[
        x_{\Gamma,d-2}(\vec{\tau}_1,\vec{\tau}_2) \coloneqq [x_{\Gamma_1}(\vec{\tau_1}),x_{\Gamma_2}(\vec{\tau}_2)]
    \]
    be the commutator of the loops $x_{\Gamma_j,d-2}(\vec{\tau}_j)\in\Omega\bigvee_{S_j}\S^{d-2}\subseteq \Omega\bigvee_{n}\S^{d-2}$. 
    This is not trivial on the boundary of the cube $\partial(\I_1\times\I_2)$, but at each of its points we have a commutator with the trivial loop, so of the shape $\alpha\alpha^{-1}$ for a loop $\alpha$, and there is a canonical homotopy from such a loop to $\const_{e^+_{d-2}}$. On $\partial\I_1\times\partial\I_2$ the homotopy is constant, since $\alpha$ is trivial as well. Thus, we can 
    use all these homotopies on a collar of $(\partial\I_1)\times\I_2\cup \I_1\times(\partial\I_2)$, to get a map on a slightly bigger $\I_1\times\I_2$, that is constant on the boundary, so defines a map on $\S^{n(d-3)}$.
\end{defn}
\begin{rem}
    One can equivalently define $x_{\Gamma,d-2}$ by directly taking iterated Samelson brackets of $x^i_{d-2}$, see~\cite[App.B]{K-thesis-paper}.
    Moreover, the Samelson bracket is adjoint, up to a sign, to the perhaps more widely known Whitehead bracket $x_{\Gamma,d-2}^{Wh}\colon\S^{n(d-3)+1}\to\bigvee_n\S^{d-2}$, which is defined inductively using the map
    \[
        Wh\colon\S^{n(d-3)+1}\to \S^{n_1(d-3)+1}\vee\S^{n_2(d-3)+1},
    \]
    given as the attaching map of the top cell in $\S^{n_1(d-3)+1}\times\S^{n_2(d-3)+1}$. See \cite[Ch.X.7]{Whitehead}.
\end{rem}

In the following lemma we show that $x_{\Gamma,d-1}$ can be obtained from $x_{\Gamma,d-2}$ by a suspension-like construction. We will use
the homeomorphisms
\[
    \faktor{(\D^1)^n}{\partial}\,\wedge\S^{n(d-3)}\cong
    \S^n\wedge\S^{n(d-3)}\cong
    \bigwedge_n\S^1\wedge\bigwedge_n\S^{d-3}\cong
    \bigwedge_n(\S^1\wedge\S^{d-3})\cong
    \S^{n(d-2)}
\]
and the maps (see the paragraph after~\eqref{eq:smash})
\[
    \I^n\times\S^{n(d-3)}\subseteq(\D^1)^n\times\S^{n(d-3)}\sra\faktor{(\D^1)^n}{\partial}\,\wedge\S^{n(d-3)}.
\]

\begin{prop}\label{prop:suspensions}\hfill
\begin{enumerate}
\item 
    The restriction of $x_{\Gamma,d-1}$ to $e^-\wedge\S^{n(d-3)}\subseteq\S^n\wedge\S^{n(d-3)}\cong\S^{n(d-2)}$ has image in the subspace $\Omega\bigvee_n\S^{d-2}\subseteq\Omega\bigvee_n\S^{d-1}$, using the inclusions of the meridian $\incl\colon\S^{d-2}\hra\S^{d-1}$. Moreover, this restriction agrees with $x_{\Gamma,d-2}$.
    In other words, there is a commutative diagram
    \[
    \begin{tikzcd}
        \S^{n(d-3)}\ar{rr}{x_{\Gamma,d-2}}\dar[hook]{e^-\wedge\Id} && \Omega\bigvee_n\S^{d-2}\dar[hook]{\Omega\bigvee_n\incl} \\
        \S^n\wedge\S^{n(d-3)}\rar{\cong} & \S^{n(d-2)}\rar{x_{\Gamma,d-1}} & \Omega\bigvee_n\S^{d-1}
    \end{tikzcd}
    \]
\item
    The restriction of $x_{\Gamma,d-1}$ to $\I^n\times\S^{n(d-3)}\subseteq(\D^1)^n\times\S^{n(d-3)}$ has image in the subspace $\Omega\bigvee_n\ball^{d-1}\subseteq\Omega\bigvee_n\S^{d-1}$, using the inclusions of the western hemisphere $\mathfrak{j}_W\colon\ball^{d-1}\hra\S^{d-1}$. In other words, if we denote this restriction by $x^\star_{\Gamma,d-1}$, then there is a commutative diagram
    \[
    \begin{tikzcd}
        \I^n\times\S^{n(d-3)}\ar{rr}{x^\star_{\Gamma,d-1}}\dar[hook]{} && \Omega\bigvee_n\ball^{d-1}\dar[hook]{\Omega\bigvee_n\mathfrak{j}_W} \\
        (\D^1)^n\times\S^{n(d-3)}\rar[two heads]{} & \S^n\wedge\S^{n(d-3)}\cong \S^{n(d-2)}\rar{x_{\Gamma,d-1}} & \Omega\bigvee_n\S^{d-1}
    \end{tikzcd}
    \]
\end{enumerate}
\end{prop}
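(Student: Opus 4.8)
The plan is to prove both parts by induction on $n=\deg\Gamma$, running the inductive construction of $x_{\Gamma,d-2}$, $x_{\Gamma,d-1}$, $x_{d-2}$, $x_{d-1}$ in parallel and tracking where things land. For $n=1$ both claims are exactly the inductive normalisation we imposed on the families $x_{d-2}$: in Section~\ref{subsec:foliations} we chose $x_{d-1}\colon\S^{d-2}\to\Omega\S^{d-1}$ so that its restriction to the meridian $\S^{d-3}\subseteq\S^{d-2}$ lands in the meridian $\Omega\S^{d-2}\subseteq\Omega\S^{d-1}$ and agrees with $x_{d-2}$, and so that $x_{d-1}$ factors through $x^\star_{d-1}\colon\I\times\S^{d-3}\to\Omega\ball^{d-1}$ on each hemisphere, using $\S^{d-2}\cong\ball^{d-2}_E\cup_\partial\ball^{d-2}_W$ and $\ball^{d-2}_{E/W}\cong(\I\times\S^{d-3})/(1\times\S^{d-3})$. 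Unwinding the identification $\S^n\wedge\S^{n(d-3)}\cong\S^{n(d-2)}$ for $n=1$ shows $e^-\wedge\Id$ is precisely the inclusion of the equatorial meridian $\S^{d-3}\hookrightarrow\S^{d-2}$, giving part~$(1)$; and the map $\I\times\S^{d-3}\hookrightarrow(\D^1)\times\S^{d-3}\twoheadrightarrow\S^1\wedge\S^{d-3}$ picks out one hemisphere, giving part~$(2)$ with $x^\star_{|,d-1}=x^\star_{d-1}$.

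For the inductive step write $\Gamma=[\Gamma_1,\Gamma_2]$ with leaf sets $S_1\sqcup S_2=\ul n$ and $n_j=|S_j|$, and recall $x_{\Gamma,d-1}(\vec\sigma_1,\vec\sigma_2)=[x_{\Gamma_1,d-1}(\vec\sigma_1),x_{\Gamma_2,d-1}(\vec\sigma_2)]$ on $\I^n\times\S^{n(d-3)}\cong\I^n\times(\I_1\times\I_2)/\partial$ where $\I_j=\I^{n_j(d-3)}$, together with the collar homotopy that kills the boundary commutators $\alpha\alpha^{-1}$. The key point is that this whole construction is natural in the ambient wedge of spheres: applying $\Omega\bigvee_n\incl\colon\Omega\bigvee_n\S^{d-2}\hookrightarrow\Omega\bigvee_n\S^{d-1}$ commutes with taking commutators of loops and with the canonical null-homotopies of $\alpha\alpha^{-1}$, since both are built only from concatenation and inversion of loops. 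For part~$(1)$: under the homeomorphism $\S^n\wedge\S^{n(d-3)}\cong\bigwedge_n(\S^1\wedge\S^{d-3})\cong\S^{n_1(d-2)}\wedge\S^{n_2(d-2)}$ the coordinate $e^-\in(\D^1)^n/\partial$ corresponds to $(e^-,e^-)\in\S^{n_1}\wedge\S^{n_2}$; so the restriction of $x_{\Gamma,d-1}$ to $e^-\wedge\S^{n(d-3)}$ is the commutator of the restrictions of $x_{\Gamma_j,d-1}$ to $e^-\wedge\S^{n_j(d-3)}$, which by the inductive hypothesis land in $\Omega\bigvee_{S_j}\S^{d-2}$ and equal $x_{\Gamma_j,d-2}$. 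The commutator and collar homotopy are then computed in $\Omega\bigvee_n\S^{d-2}$ and map into $\Omega\bigvee_n\S^{d-1}$ by the natural inclusion, so the restriction equals $[x_{\Gamma_1,d-2},x_{\Gamma_2,d-2}]=x_{\Gamma,d-2}$, proving the square commutes. For part~$(2)$: the map $\I^n\times\S^{n(d-3)}\hookrightarrow(\D^1)^n\times\S^{n(d-3)}\twoheadrightarrow\S^{n(d-2)}$ factors as the product over the two tree-halves of the single-coordinate maps $\I\times\S^{d-3}\hookrightarrow\S^1\wedge\S^{d-3}$; by the inductive hypothesis each $x_{\Gamma_j,d-1}$ restricted to $\I^{n_j}\times\S^{n_j(d-3)}$ lands in $\Omega\bigvee_{S_j}\ball^{d-1}$ and equals $x^\star_{\Gamma_j,d-1}$. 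Since $\ball^{d-1}$ is a subspace of $\S^{d-1}$ containing the basepoint, $\Omega\bigvee_n\ball^{d-1}$ is closed under concatenation, inversion and the $\alpha\alpha^{-1}$ null-homotopies, so the commutator and collar homotopy defining $x_{\Gamma,d-1}$ stay inside $\Omega\bigvee_n\ball^{d-1}$; this exhibits the claimed factorisation $x^\star_{\Gamma,d-1}$ and the commutativity of the second diagram.

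The main obstacle I anticipate is purely bookkeeping rather than conceptual: carefully matching the various identifications $\S^n\wedge\S^{n(d-3)}\cong\S^{n(d-2)}$ and the quotient maps $(\D^1)^n\times\S^{n(d-3)}\twoheadrightarrow\S^{n(d-2)}$ with the inductive coordinate splitting $\S^{n(d-3)}\cong\S^{n_1(d-3)}\wedge\S^{n_2(d-3)}$ from~\eqref{eq:smash}, so that ``restricting to $e^-$'' and ``restricting to $\I^n$'' distribute correctly across the commutator. One must check that the chosen homeomorphisms are compatible across the induction, i.e.\ that the homeomorphism used for $\Gamma$ restricts on each factor to the ones used for $\Gamma_1$ and $\Gamma_2$; this is exactly the kind of coherence one fixes ``once and for all'' as in Section~\ref{subsec:notation-other}. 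A secondary point requiring a line of care is that the collar used to kill boundary commutators must be taken compatibly for $x_{\Gamma,d-1}$ and $x_{\Gamma,d-2}$, but since the same canonical $\alpha\mapsto\const$ homotopy is used in both and it commutes with $\Omega\bigvee_n\incl$, no extra choice is needed. Once these identifications are pinned down, both diagrams commute essentially by inspection.
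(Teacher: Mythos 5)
Your argument is correct and is essentially the paper's own proof: the paper disposes of Proposition~\ref{prop:suspensions} in one line, "by induction on $\deg\Gamma=n\geq1$, with the case $n=1$ in~\eqref{eq-def:x-star}", which is exactly your base case plus the observation that the commutator construction and the canonical $\alpha\alpha^{-1}$ null-homotopies are compatible with the inclusions $\Omega\bigvee_n\incl$ and $\Omega\bigvee_n\mathfrak{j}_W$. Your write-up simply makes explicit the coordinate bookkeeping that the paper leaves implicit.
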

\begin{proof}
   This follows by induction on $\deg\Gamma=n\geq 1$, with the case $n=1$ in~\eqref{eq-def:x-star}. 
\end{proof}

In other words, the map $x^\star_{\Gamma,d-1}$ is the iterated commutator of the arcs foliating the balls $\ball^{d-1}_i$, which is the western hemisphere of $\S^{d-1}_i$ respectively. In fact, we can generalise this to include all possible restrictions of $x_{\Gamma,d-1}$ to subcubes of $(\D^1)^n$, using the following definition.
Let $\mathfrak{j}_E\colon\ball^{d-1}\hra\S^{d-1}$ be the inclusion of the eastern, and $\mathfrak{j}_W\colon\ball^{d-1}\hra\S^{d-1}$ of the western hemisphere as above. Then for any $S\subseteq\ul{n}=\{1,\dots,n\}$ we have the corresponding inclusion
\[
    j_{W/E}^S\coloneqq
    \bigvee_{i\in S}\mathfrak{j}_W
    \vee\bigvee_{i\in\ul{n}\sm S} \mathfrak{j}_E
    \colon\;
    \bigvee_n\ball^{d-1} \hra \bigvee_n\S^{d-1}. 
\]
\begin{defn}\label{def:glueOp}
    Let $A\colon \I^n\to  \Omega\bigvee_n\ball^{d-1}$ be a map such that $A|_{\partial\I^n}$ has image in $\Omega\bigvee_n\partial\ball^{d-1}$. Then we define the map
    \[
        \glueOp_{S\subseteq\ul{n}} \refl_S(A)\colon (\D^1)^n\ra  \Omega\bigvee_n\S^{d-1}
    \]
    by writing $(\D^1)^n=([-1,0]\cup\I)^n=\glueOp_{S\subseteq\ul{n}}[-1,0]^S\times\I^{\ul{n}\sm S}$,
    and for $S\subseteq\ul{n}=\{1,\dots,n\}$ defining:
    \[\begin{tikzcd}
        \refl_S(A)\colon\; 
        [-1,0]^S\times\I^{\ul{n}\sm S}\rar{\cong} &
        \I^{\ul{n}}\rar{A} &
        \Omega\bigvee_n\ball^{d-1}\rar{\Omega j_{W/E}^S} &
        \Omega\bigvee_n\S^{d-1},
    \end{tikzcd}
    \]
    where the first map is the product of the \emph{reflections} $[-1,0]\to[0,1]$ around $0$ and $\Id\colon\I\to\I$.
\end{defn}
In other words, $\glueOp_{S\subseteq\ul{n}} \refl_S(A)$ is obtained by gluing together along the $1$-faces of $\I^n$ suitable ``reflections'' of the cube $A$. 
Completely analogously to Proposition~\ref{prop:suspensions}~(ii) we can check that the restriction of $x_{\Gamma,d-1}$ to a cube $[-1,0]^S\times\I^{\ul{n}\sm S}\subseteq(\D^1)^n$ is a reflection of $x^\star_{\Gamma,d-1}$, proving the following.

\begin{prop}\label{prop:glueOp}
    The map $x_{\Gamma,d-1}$ can be obtained by gluing together reflections of $x^\star_{\Gamma,d-1}$, i.e.\ 
    \[
        x_{\Gamma,d-1}=\glueOp_{S\subseteq\ul{n}} \refl_S(x^\star_{\Gamma,d-1}).
    \]
\end{prop}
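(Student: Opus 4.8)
The plan is to prove Proposition~\ref{prop:glueOp} by induction on $\deg\Gamma=n\geq1$, directly imitating the proof of Proposition~\ref{prop:suspensions}~(ii) but keeping track of all $2^n$ subcubes of $(\D^1)^n$ rather than just the single positive corner. The underlying combinatorial fact is that $(\D^1)^n=([-1,0]\cup[0,1])^n$ decomposes as the union $\bigcup_{S\subseteq\ul{n}}[-1,0]^S\times\I^{\ul{n}\sm S}$, with the pieces glued precisely along the hyperplanes $\{x_i=0\}$, which after our identifications become the $1$-faces used in Definition~\ref{def:glueOp}. So it suffices to show that for each $S\subseteq\ul{n}$ the restriction of $x_{\Gamma,d-1}$ to the subcube $[-1,0]^S\times\I^{\ul{n}\sm S}$ equals $\refl_S(x^\star_{\Gamma,d-1})$, i.e.\ the composite $[-1,0]^S\times\I^{\ul{n}\sm S}\xrightarrow{\cong}\I^{\ul{n}}\xrightarrow{x^\star_{\Gamma,d-1}}\Omega\bigvee_n\ball^{d-1}\xrightarrow{\Omega j_{W/E}^S}\Omega\bigvee_n\S^{d-1}$.

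First I would handle the base case $n=1$. Here the only tree is $\Gamma=|$ and $x_{|,d-1}=x_{d-1}\colon\S^{d-2}\cong\S^1\wedge\S^{d-3}\to\Omega\S^{d-1}$, built (as in the paragraph around~\eqref{eq-def:x-star}) by gluing two copies of $x^\star_{d-1}\colon\I\times\S^{d-3}\to\Omega\ball^{d-1}$ along $\{1\}\times\S^{d-3}$, one via the eastern, the other via the western hemisphere inclusion $\ball^{d-1}\hra\S^{d-1}$. Translating the smash-product model $\S^1\wedge\S^{d-3}$ into the cube model $(\D^1)^1\times\S^{d-3}/\!\sim$ via the map $\I\subseteq\D^1\twoheadrightarrow\D^1/\partial\cong\S^1$ of Section~\ref{subsec:notation-other}, this gluing is exactly $\glueOp_{S\subseteq\{1\}}\refl_S(x^\star_{d-1})$: the subcube $[0,1]$ carries $x^\star_{d-1}$ postcomposed with $\mathfrak{j}_E$ (the $S=\emptyset$ term) and $[-1,0]$ carries the reflected $x^\star_{d-1}$ postcomposed with $\mathfrak{j}_W$ (the $S=\{1\}$ term). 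This is just a restatement of the parametrisation convention for $x_{d-1}$, so there is essentially nothing to check beyond matching conventions.

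For the inductive step, write $\Gamma=[\Gamma_1,\Gamma_2]$ with $\Gamma_j\in\Tree(S_j)$, $n_j=|S_j|$, $S_1\sqcup S_2=\ul{n}$, and use $\S^{n(d-2)}\cong\S^{n_1(d-2)}\wedge\S^{n_2(d-2)}$ so that $x_{\Gamma,d-1}(\vec\sigma_1,\vec\sigma_2)=[x_{\Gamma_1,d-1}(\vec\sigma_1),x_{\Gamma_2,d-1}(\vec\sigma_2)]$ (with the collar correction on $\partial$ as in the definition of $x_{\Gamma,d-1}$). Splitting each $\vec\sigma_j\in(\D^1)^{n_j}\times\S^{n_j(d-3)}$ across the decomposition $(\D^1)^{n_j}=\bigcup_{S_j'\subseteq S_j}[-1,0]^{S_j'}\times\I^{S_j\sm S_j'}$ and applying the inductive hypothesis to $\Gamma_1$ and $\Gamma_2$ separately, the restriction of $x_{\Gamma,d-1}$ to $[-1,0]^{S'}\times\I^{\ul{n}\sm S'}$ (with $S'=S_1'\sqcup S_2'$) becomes the Samelson bracket of $\refl_{S_1'}(x^\star_{\Gamma_1,d-1})$ and $\refl_{S_2'}(x^\star_{\Gamma_2,d-1})$; since the hemisphere inclusions $j_{W/E}^{S'}$ restricted to the wedge summands in $S_1$ and in $S_2$ are exactly $j_{W/E}^{S_1'}$ and $j_{W/E}^{S_2'}$, and taking Samelson brackets commutes with postcomposition by loop maps of wedge inclusions, this is precisely $\refl_{S'}$ applied to the bracket $[x^\star_{\Gamma_1,d-1},x^\star_{\Gamma_2,d-1}]=x^\star_{\Gamma,d-1}$. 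Glueing over all $S'\subseteq\ul{n}$ then yields the claimed identity, and the pieces agree on overlaps because the collar homotopies used to extend the bracket over $\partial\I^{\ul{n}}$ in the definitions of $x_{\Gamma,d-1}$ and of $x^\star_{\Gamma,d-1}$ are the same canonical homotopies $\alpha\alpha^{-1}\rightsquigarrow\const$.

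The main obstacle is purely bookkeeping rather than conceptual: one must be scrupulous about the three different models for spheres in play (the cube-mod-boundary model $(\D^1)^n/\partial$, the iterated smash $\bigwedge_n\S^1$, and the single $\S^n$) and about where the collar neighbourhoods of $\partial\I^{n_j(d-3)}$ used to kill the boundary $\alpha\alpha^{-1}$ terms actually sit, so that the reflections $\refl_S$ truly glue along $1$-faces without mismatch. In particular one should check that the homeomorphism $\S^{n(d-3)}\cong\I^{n(d-3)}/\partial$ fixed in Definition~\ref{def:partIV} and used throughout is compatible with the $\S^{n_1(d-3)}\wedge\S^{n_2(d-3)}$ splitting in~\eqref{eq:smash}, so that $x^\star_{\Gamma,d-1}$ is genuinely a commutator of the two lower pieces; this is exactly the sort of compatibility already invoked (and proved) for Proposition~\ref{prop:suspensions}, so I would simply reduce to that. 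No new geometry enters.
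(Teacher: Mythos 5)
Your proposal is correct and follows essentially the same route as the paper: the paper proves this proposition by observing that the restriction of $x_{\Gamma,d-1}$ to each subcube $[-1,0]^S\times\I^{\ul{n}\sm S}$ is the corresponding reflection of $x^\star_{\Gamma,d-1}$, checked ``completely analogously'' to Proposition~\ref{prop:suspensions}(ii), i.e.\ by the same induction on $\deg\Gamma$ with base case given by the parametrisation~\eqref{eq-def:x-star}. Your write-up just makes that induction (commutator structure in the inductive step, compatibility of the hemisphere inclusions with the Samelson bracket, and the matching collar homotopies) explicit, which is exactly what the paper leaves to the reader.
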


For example, if $n=2$ and $d=4$ and $\Gamma=[\mathsf{x}^1,\mathsf{x}^2]$ then $x_{\Gamma,d-2}\colon\S^2\to\Omega(\S^2\vee\S^2)$ maps $(\tau_1,\tau_2)\in(\D^1)^2$ to the commutator of the arcs $m_1(\tau_1)$ and $m_2(\tau_2)$ depicted in Figure~\ref{fig:realmap-2}. If $\tau_j>0$ we use the western hemisphere of $m_j$, whereas if $\tau_j<0$ we use the eastern hemisphere. See also~\eqref{eq-def:x-star} and Figure~\ref{fig:foliation-x}. Thus, $x_{\Gamma,d-2}$ splits into a $\glueOp$-sum of four maps.

\subsection{Multi-families are geometric analogues of Samelson products}
\label{subsec:analogue}

Recall from Section~\ref{subsec:model-ball} that $\ball^d_n$ is the extended $d$-ball $\ball^d_{ext}$ with an arc $a_0\subseteq\partial\ball^d_{ext}$ and a set of neat arcs $a_i\colon\D^1\hra\ball^d_{ext}$, $1\leq i\leq n$. For each arc we fixed a based meridian ball, giving $m\colon\bigvee_n\ball^{d-1}\hra\ball^d_n$, see~\eqref{eq-def:m}. Recall also the map $\mu_{\Gamma,d}\coloneqq\mu^\star_{\Gamma,d}(\vec{0},-)$ from \eqref{eq-def:mu-Gamma}.

\begin{thm}\label{thm:analogues}
    For the map $\mu^\star_{\Gamma,d}$ from \eqref{eq-def:ol-mu} the following square commutes up to homotopy:
    \[
    \begin{tikzcd}
        \I^n\times\S^{n(d-3)}
        \dar[swap]{x^\star_{\Gamma,d-1}}\rar{\mu^\star_{\Gamma,d}} 
        & \Emb((\S^1,\S^1_+),(\ball^d_n,a_0))
        \dar[hook]\\
        \Omega\bigvee_n\ball^{d-1}
        \rar[hook]{m}
        & \Omega\ball^d_n,
    \end{tikzcd}
    \]
    with the homotopy that on $\vec{0}\in\I^n$ restricts to a homotopy making the following commute:
\[
\begin{tikzcd}
    \S^{n(d-3)}
    \dar[swap]{x_{\Gamma,d-2}}\rar{\mu_{\Gamma,d}} 
    & \Emb((\S^1,\S^1_+),(\ball^d\sm\bigsqcup_{i=1}^n a_i,a_0))
    \dar[hook]\\
    \Omega\bigvee_n\S^{d-2}
    \rar[hook]{\partial m}[swap]{\sim}
    & \Omega(\ball^d\sm\bigsqcup_{i=1}^n a_i).
\end{tikzcd}
\]
\end{thm}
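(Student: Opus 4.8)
The plan is to prove the statement by induction on $\deg\Gamma=n$, running the induction in parallel with the inductive definitions of both sides: $\mu^\star_{\Gamma,d}$ is built in Definitions~\ref{def:base}--\ref{def:partIV} by plumbing and embedded commutators following the tree $\Gamma=[\Gamma_1,\Gamma_2]$, while $x^\star_{\Gamma,d-1}$ is built in Section~\ref{subsec:Samelson} by taking commutators of loops following the same tree. The base case $n=1$ is essentially the parametrisation of the foliations set up in Section~\ref{subsec:foliations}: by construction $\mu^\star_{|,d}=\mu^\star_d$ foliates the meridian ball $\ball^{d-1}\subseteq\ball^d$, the inclusion of that ball into $\ball^d_1$ is $m$, and $\mu^\star_d$ was obtained precisely by the modification of $x^\star_{d-1}$ described around~\eqref{eq-def:mu-star}, so under the homotopy equivalence $\partial m\colon\S^{d-2}\xrightarrow{\sim}\ball^d\sm a_1$ the circle family $\mu_d$ corresponds to $x_{d-2}$. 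One must here carefully record that the $\alpha$-to-$a_0$ replacement and the collaring used to pass from annuli to $\S^1$ are homotopies through admissible embeddings, so that they are absorbed into the ``up to homotopy'' in the statement.

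For the inductive step I would first observe that the target squares are compatible with the plumbing/wedge decomposition. Concretely: the inclusion $m\colon\bigvee_n\ball^{d-1}\hra\ball^d_n=(\ball^{(1)}\natural\ball^{(2)})_{ext}$ restricts on each wedge factor to the inclusion $\bigvee_{S_j}\ball^{d-1}\hra\ball^{(j)}$, and correspondingly $\Omega\ball^d_n$ contains $\Omega\ball^{(j)}$ with the two copies of $a_0$ (hence the two loop spaces) sharing the basepoint region after plumbing. The key geometric input is then the following: the embedded commutator $[C_1,C_2]_\natural$ of Definition~\ref{def:emb-commutator}, as a loop in $\ball^d_n$ based at $a_0$, is homotopic rel.\ endpoints to the concatenation $c_1\cdot c_2\cdot c_1^{-1}\cdot c_2^{-1}$ of the loops $c_j$ pushed into the plumbed ball (this is Remark~\ref{rem:smooth-emb-comm}(4)), and moreover the strips $E_i$ and the $A_j^\pm$ push-offs are built from pieces of the $C_j$ that live in the respective $\ball^{(j)}$, so this homotopy is natural in the parameters $(\vec t_j,\vec\tau_j)$. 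Composing: by induction $m\circ C_j\simeq j_{W/E}\circ x^\star_{\Gamma_j,d-1}(\vec t_j,\vec\tau_j)$ in $\Omega\ball^{(j)}$, and taking commutators of both sides (which is what $\mu^\star_{\Gamma,d}$ does on the left via $[-,-]_\natural$ and what $x^\star_{\Gamma,d-1}$ does on the right via $[-,-]$ in $\Omega\bigvee_n\ball^{d-1}$) gives a homotopy $m\circ[C_1,C_2]_\natural\simeq m\circ x^\star_{\Gamma,d-1}$ on $\I^{n_1}\times\I^{n_2}\times\S^{n_1(d-3)}\times\S^{n_2(d-3)}$.

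It then remains to see that this homotopy descends to the quotients used in Definition~\ref{def:partIV} and in the definition of the Samelson bracket, i.e.\ that it is compatible with the two collaring/trivialisation procedures: on the left, the isotopy $h^\e_{\Gamma,d}$ of Parts II--III that kills $\mu^\star_{\Gamma,d}$ on the sub-wedge $\S^{n_1(d-3)}\times e^+\vee e^+\times\S^{n_2(d-3)}$, and on the right, the canonical null-homotopy of $\alpha\alpha^{-1}$ used to extend the bracket over $\partial(\I_1\times\I_2)$. Both are of the form ``a commutator with a nullhomotopic loop is canonically nullhomotopic'', so one checks that the geometric isotopy $h^\e_{\Gamma,d}$ maps, under $m$, to (a homotopy representative of) the algebraic null-homotopy; this is where one must be slightly careful, since $h_{\Gamma,d}$ was only described pictorially (Figure~\ref{fig:plumbing-interpolating}), but the relevant point is just that it drags $(c_0^{\ball^{(1)}})^-$ across an embedded strip, which in $\Omega\ball^d_n$ is the obvious contraction. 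Finally, specialising the whole homotopy to $\vec t=\vec 0\in\I^n$ and invoking the intersection statement Theorem~\ref{thm:multi-family}(iii) (so that all circles avoid the $a_i$) together with $x^\star_{\Gamma,d-1}(\vec 0,-)=x_{\Gamma,d-2}$ from Proposition~\ref{prop:suspensions}(ii) gives the second square, using that $\partial m\colon\bigvee_n\S^{d-2}\to\ball^d\sm\bigsqcup a_i$ is a homotopy equivalence. The main obstacle is the middle step: making precise that ``take the embedded commutator'' on the left is compatible, through the homotopy equivalence $m$, with ``take the loop commutator'' on the right, uniformly in all parameters and in a way that survives passage to the smash-product quotient — i.e.\ naturality of the homotopy, not its existence pointwise.
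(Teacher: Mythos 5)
Your proposal is correct and follows essentially the same route as the paper: induction on $n$ with the base case coming from the construction of $\mu^\star_d$ as a modification of $x^\star_{d-1}$, and the inductive step using that, once one works in $\Omega\ball^d_n$ rather than in embeddings, the strips $E_i$ contract and the push-offs collapse onto the cores $c_j$, identifying the embedded commutator with the loop commutator and the isotopy $h^\e_{\Gamma,d}$ with the canonical nullhomotopy $t\mapsto\alpha|_{[0,t]}\alpha|_{[0,t]}^{-1}$. The points you flag as delicate (naturality of the collapse in the parameters and compatibility with the smash/collaring quotients) are exactly the ones the paper's proof addresses.
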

\begin{proof}
    We prove this by induction on $n\geq1$.
    
    The base case $n=1$ is clear from Section~\ref{sec:preliminaries}: the map $\mu^\star_{|,d}=\mu_d^\star$ from Definition~\ref{def:base} was defined in \eqref{eq-def:mu-star} by homotoping the foliation $x^\star_{d-1}$ of $\ball^{d-1}$, and then using the inclusion $\ball^{d-1}\subseteq\ball^d$ of the meridian. Since by definition $x^\star_{|,d-1}=x^\star_{d-1}$ we have the claimed homotopy commutative square.

    Assuming that the statement holds for $\mu^\star_j\coloneqq\mu^\star_{\Gamma_j,d}$ with $\deg(\Gamma_j)=n_j<n$, we check that it also holds for $\mu^\star_{\Gamma,d}$ where $\Gamma=[\Gamma_1,\Gamma_2]$ has $\deg(\Gamma)=n=n_1+n_2$.

    Since by the induction hypothesis $x^\star_{\Gamma_j,d-1}$ is homotopic to $\mu^\star_j$ after including both targets into $\Omega\ball^d_{n_j}$, we have that $x^\star_{\Gamma,d-1}\coloneqq[x^\star_{\Gamma_1,d-1},x^\star_{\Gamma_2,d-1}]$ is homotopic to the Samelson bracket $[\mu^\star_1,\mu^\star_2]$. Thus, it suffices to show that $\mu^\star_{\Gamma,d}$ is homotopic to $[\mu^\star_1,\mu^\star_2]$. The latter is defined from the commutator map
    \[
        \I^{n_1}\times\I^{n_2}\times\I^{n_1(d-3)}\times\I^{n_2(d-3)}\ni(\vec{t}_1,\vec{t}_2,\vec{\tau}_1,\vec{\tau}_2)
        \quad\mapsto\quad
        [\mu^\star_1(\vec{\tau}_1),\mu^\star_2(\vec{\tau}_2)]\in \Map((\S^1,\S^1_+),(\ball^d_n,a_0)),
    \]
    by canonically nullhomotoping the boundary $\I^{n_1}\times\I^{n_2}\times\partial(\I^{n_1(d-3)}\times\I^{n_2(d-3)})$.
    On the other hand, the multi-family $\mu^\star_{\Gamma,d}$ was defined by canonically nullhomotoping the embedded commutator map which used the push-offs of $c_j=\mu^\star_j(\vec{t}_j,\vec{\tau}_j)$ and some auxiliary strips $E_i$.
    Since we are now working in the space of maps instead of embeddings, we can contract $E_i$ and collapse the push-offs back to $c_j$, so that we precisely get their commutator.
    
    Finally, this collapse also provide a homotopy between the canonical nullhomotopies in the two cases. Namely, the push-offs bounded the remaining part of the annuli $C_j$, and we constructed the nullhomotopy by dragging along them. After this annulus is collapsed onto the core, this dragging clearly corresponds to the nullhomotopy of the shape $t\mapsto \alpha|_{[0,t]}\alpha|_{[0,t]}^{-1}$.
\end{proof}

\section{The Taylor tower}\label{sec:tower}

In Section~\ref{subsec:tower} we discuss the reduced punctured knots model for the Taylor tower for $\Embp(\D^1,M)$. In Section~\ref{subsec:paths} we construct a homotopy from $\ev_{n+1}\realmap_n(\Gamma^{g_{\ul{n}}})$ to a point in the layer $\pF_{n+1}(M)$.

\subsection{The Taylor tower for the arc}\label{subsec:tower}

Recall from Section~\ref{subsec:notation-gen} that $\ul{n}\coloneqq\{1,\dots,n\}$ and that we fix a collection $J_i=[L_i,R_i]\subseteq \D^1$ with $0<L_i<R_i<L_{i+1}$ for $i\geq0$, see~\eqref{eq-def:J-i}. For a set $T$ we denote by $\Delta^T$ the simplex spanned by the vertex set $T$ (so it has dimension $|T|-1$), and if $k\notin T$ we write $Tk\coloneqq T\sqcup\{k\}$. We will use the following definitions and results from \cite{KT-rpn} and~\cite{K-thesis-paper}.

\begin{defn}
\label{def:M-T}
    For a $d$-manifold $M$ with boundary and a neat arc $\u\colon\D^1\hra M$ 
    with a tubular neighbourhood $\nu\u$
    define
    \[
        M_T\coloneqq (M\sm\nu\u)\cup\bigsqcup_{i\in  T}\nu(\u|_{J_i})=M\sm\nu(\u|_{\D^1\sm \bigsqcup_{i\in  T}J_i}).
    \]
    For any $S\subseteq\ul{n}$ define $\Embp(J_0,M_{0S})$ as the space of neat embeddings $\D^1\hra M_{0S}$ which on the boundary agree with $\u|_{J_0}$, see Figure~\ref{fig:nbhd-of-U-S}.
    For $k\notin S\subseteq\ul{n}$ define the map \[
    \rho_{0S}^k\colon\Embp(J_0,M_{0S})\to\Embp(J_0,M_{0Sk})
    \]
    as the postcomposition with the inclusion $i_{0S}^k\colon M_{0S}\hra M_{0Sk}$.
\end{defn}
\begin{figure}[!htbp]
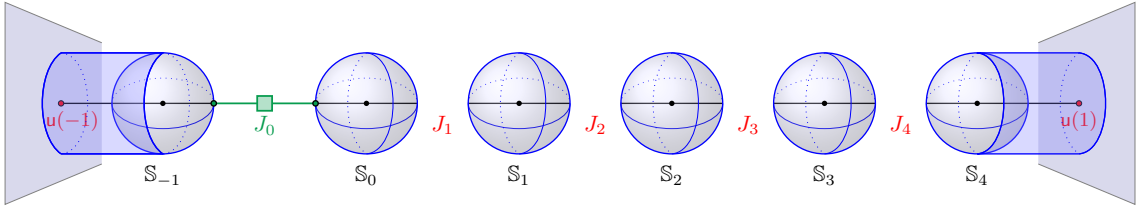

    \centering
    \includestandalone[mode=buildmissing,width=\linewidth]{fig-knotted/fig-nbhd-of-U-S}
    \caption{A knot $J_0\hra M_{01234}=M\sm\nu\u|_{\D^1\sm (J_0\sqcup J_1\sqcup J_2\sqcup J_3\sqcup J_4)}$ is depicted by a square, with $d=3$.}
    \label{fig:nbhd-of-U-S}
\end{figure}
\begin{rem}
\label{rem:knots-J-0}
    For $S=\emptyset$ the inclusion
    \[
        \Embp(J_0,M_0)\hra\Embp(\D^1,M),
    \]
    that sends $K$ to $\u|_{\D^1\sm J_0}\cup K$,
    is a homotopy equivalence, see \cite{KT-rpn}.
\end{rem}

The following definition can be obtained from the standard punctured knots definition by taking (homotopy) fibres in one direction. 
\begin{defn}[{\cite{KT-rpn}}]
\label{def:rpn}
    Let $n\geq1$.
    \begin{enumerate}
\item 
    The $n$-th stage of the Taylor tower $\rpT_n(M)$ is the space of maps
    \[
        f^{\ul{n}}\colon\Delta^{\ul{n}}\to\Embp(J_0,M_{0\ul{n}})
    \]
    such that for $S\subseteq\ul{n}$ the face $\Delta^S\subseteq\Delta^{\ul{n}}$ lands in the subspace 
    \[
        \rho_S^{\ul{n}\sm S}(\Embp(J_0,M_{0S}))\subseteq\Embp(J_0,M_{0\ul{n}}).
    \]
\item
    Define $\p_n\colon \rpT_n(M)\to\rpT_{n-1}(M)$ as the restriction along the face $\Delta^{\ul{n-1}}\subseteq\Delta^{\ul{n}}$.
\item
    Define $\ev_n\colon \Embp(J_0,M_0)\to\rpT_n(M)$ as $\ev_n(K)\coloneqq\const_{\rho^{\ul{n}}_\emptyset(K)}$.\qedhere
\end{enumerate}
\end{defn}
\begin{thm}[Convergence Theorem~{\cite{GKmultiple}}]
\label{thm:rpn}
    If $d\geq4$, $n\geq1$, $0\leq k< (n+1)(d-3)$, the evaluation map induces an isomorphism
    \[
        \pi_k\ev_{n+1}\colon\quad\pi_k\Embp(J_0,M_0)\cong\pi_k \rpT_{n+1}(M).
    \]
\end{thm}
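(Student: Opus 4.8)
The plan is to deduce this from the Goodwillie--Weiss convergence theorem for embedding calculus together with the Goodwillie--Klein connectivity estimates for the layers, after first identifying the reduced punctured knots tower with the usual Taylor tower of Goodwillie and Weiss. \textbf{Step 1.} First I would invoke the main comparison result of \cite{KT-rpn}: there are weak equivalences $\rpT_n(M)\simeq T_n(\D^1,M)$, natural in $n$ and compatible with the structure maps $\p_n$, and under the homotopy equivalence $\Embp(J_0,M_0)\simeq\Embp(\D^1,M)$ of Remark~\ref{rem:knots-J-0} the map $\ev_n$ corresponds to the Goodwillie--Weiss evaluation map. This reduces the assertion to showing that $\Embp(\D^1,M)\to T_{n+1}(\D^1,M)$ is $\big((n{+}1)(d{-}3)\big)$-connected.

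\textbf{Step 2.} Since the codimension $d-1\ge 3$, embedding calculus \cite{Weiss,GW} yields a weak equivalence $\Embp(\D^1,M)\simeq\operatorname{holim}_j T_j(\D^1,M)$, so the connectivity of the evaluation map into $T_{n+1}$ is governed by how connected the structure maps $\p_j\colon T_j\to T_{j-1}$ are for $j\ge n+2$. \textbf{Step 3.} I would then use that the homotopy fiber $\pF_j(M)$ of $\p_j$ is $\big((j{-}1)(d{-}3)-1\big)$-connected. In the reduced model this is precisely the homotopy-type description of the layers in Theorem~\ref{thm:thesis}, whose lowest nonvanishing homotopy group lies in degree $(j{-}1)(d{-}3)$ (cf.\ \eqref{eq:Lie}); equivalently it follows from the Goodwillie--Klein higher disjunction theorem \cite{GKmultiple,GKW} applied to the cubical diagram of punctured embedding spaces $S\mapsto\Embp(\D^1\sm S,M)$, where deleting each point raises connectivity by $d-3$. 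Hence $\p_j$ is at least $\big((j{-}1)(d{-}3)\big)$-connected. \textbf{Step 4.} Finally, for $j\ge n+2$ each $\p_j$ is at least $\big((n{+}1)(d{-}3)\big)$-connected, so a standard argument with the tower of fibrations shows that $\operatorname{holim}_j T_j\to T_{n+1}$, and therefore $\ev_{n+1}$, is $\big((n{+}1)(d{-}3)\big)$-connected; in particular $\pi_k\ev_{n+1}$ is an isomorphism for $k<(n{+}1)(d{-}3)$, which is the claim (indeed it is even an epimorphism in degree $(n{+}1)(d{-}3)$).

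The hard part is Step 3: the connectivity of the layers is essentially the content of the Goodwillie--Klein multiple disjunction theorem, whereas Steps 1, 2 and 4 are formal bookkeeping once the models are matched and convergence is invoked. In the present framework one may either cite Theorem~\ref{thm:thesis} as a black box (trading disjunction for the combinatorial identification of $\pF_j$) or feed in the higher-excision estimate directly; some form of that input is unavoidable, which is exactly why re-proving it by grasper-surgery methods is isolated as Question~\ref{quest:reprove-GKW}.
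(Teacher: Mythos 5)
Your proposal is correct in substance, but note that the paper does not prove this statement at all: it is imported as a black box, cited directly to Goodwillie--Klein \cite{GKmultiple} (and \cite{GKW} in the introduction), so your argument is a reconstruction of the standard deduction rather than a parallel to anything in the text. Two remarks on that reconstruction. First, your attribution of where the deep input sits is slightly off: the connectivity of the layers $\pF_j(M)$ (Step 3) is the comparatively formal part --- it follows from the general description of the layers of the tower as in \cite{Weiss,GW} (this is exactly the vanishing $\pi_k(T_j,T_{j+1})=0$ for $k\leq j(d-3)$ quoted in the introduction, and is presupposed by the phrase ``lowest nonvanishing homotopy group'' in Theorem~\ref{thm:thesis}) --- whereas the genuinely hard analytic input is Step 2: the equivalence $\Embp(\D^1,M)\simeq\operatorname{holim}_jT_j(\D^1,M)$ is not a consequence of \cite{Weiss,GW} alone but is precisely the Goodwillie--Klein convergence theorem, proved via multiple disjunction. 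Second, for that reason your derivation is mildly circular in spirit: the quantitative statement you are proving is the usual form in which Goodwillie--Klein convergence is stated (the $(k(c-2)+1-m)$-connectivity estimate with $m=1$, $c=d-1$ gives exactly $(n{+}1)(d{-}3)$), and the qualitative convergence you feed in at Step 2 is normally obtained from it, not the other way around. This is legitimate as bookkeeping --- qualitative convergence plus increasing layer connectivity does yield the connectivity of $\Embp(J_0,M_0)\to\rpT_{n+1}(M)$ via the tower argument of Step 4, and Step 1 (the comparison with the reduced punctured-knots model from \cite{KT-rpn}) is as you say --- but you should present it as a repackaging of \cite{GKmultiple}, which is all the paper itself claims; an independent proof is exactly what Question~\ref{quest:reprove-GKW} asks for.
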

Note that this implies $\evrel_{n+i}\colon\pi_{n(d-3)+i}(\rpT_n(M),\Embp(J_0,M_0))\cong\pi_{n(d-3)+i}(\rpT_n(M),\rpT_{n+1}(M))$ for $0\leq i< d-3$.
We define  the \emph{layer} $\pF_{n+1}(M)\coloneqq\fib_{\ev_n(\u)}(\p_{n+1}\colon \rpT_{n+1}(M)\to \rpT_n(M))$, and in the next lemma describe its points. Its homotopy type will be discussed in Section~\ref{subsec:thesis}.

\begin{lem}[{\cite{KT-rpn}}]
\label{lem:rpn-fiber}
    The map $\p_{n+1}$ is a fibration and a point $f\in \rpT_{n+1}(M)$ belongs to the layer $\pF_{n+1}(M)$
    if and only if the restriction of $f^{\ul{n+1}}\colon\Delta^{\ul{n+1}}\to\Embp(J_0,M_{0\ul{n+1}})$ to $\Delta^{\ul{n}}\subseteq\Delta^{\ul{n+1}}$, included as the union of faces not containing $n+1$, is equal to $\ev_n(\u|_{J_0})^{\ul{n}}=\const_{\rho^{\ul{n}}_\emptyset(\u|_{J_0})}$. 
    
    In fact, $f$ is equivalently described by a map
    \begin{equation}\label{eq:tofib-pt}
        F\colon \I^{\ul{n}}\to \Embp(J_0,M_{0\ul{n+1}})
    \end{equation}
    such that $F(\vec{t})\in\Embp(J_0,M_{0Sn+1})$ if $t_i=0,\forall i\in\ul{n+1}\sm S$, and $F(\vec{t})=\u|_{J_0}$ if $\exists i\in\ul{n+1},t_i=1$.
\end{lem}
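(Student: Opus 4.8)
The plan is to unravel Definition~\ref{def:rpn} for $n+1$ together with the definition of the fibre of $\p_{n+1}$, and then convert the simplicial data into cubical data. First I would observe that $\p_{n+1}$ is a fibration: by Definition~\ref{def:rpn}(2) it is restriction along the inclusion $\Delta^{\ul{n}}\hra\Delta^{\ul{n+1}}$ of the top face not containing $n+1$, and restriction along a cofibration of simplices of compatible mapping spaces is a Serre (indeed Hurewicz) fibration; the face inclusion is a deformation retract, so one can lift paths by pushing into the collar. I would then identify the strict fibre over the basepoint $\ev_n(\u)$: by definition a point $f\in\rpT_{n+1}(M)$ lies in $\pF_{n+1}(M)$ iff its restriction to $\Delta^{\ul{n}}$ equals the constant map $\ev_n(\u|_{J_0})^{\ul{n}}=\const_{\rho^{\ul{n}}_\emptyset(\u|_{J_0})}$. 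Since $\Delta^{\ul{n}}$ here is interpreted as the union of all faces of $\Delta^{\ul{n+1}}$ not containing the vertex $n+1$ (the boundary of the star of $n+1$), this gives the first description in the statement directly from the definitions.

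Next I would pass to the cubical model. The star of the vertex $n+1$ in $\Delta^{\ul{n+1}}$ — that is, the union of the faces $\Delta^{Sn+1}$ for $S\subseteq\ul{n}$ — is combinatorially a cone on $\partial\Delta^{\ul{n}}$ with cone point $n+1$, and there is a standard homeomorphism of pairs
\[
    \big(\mathrm{star}(n+1),\ \mathrm{link}(n+1)\sqcup\{n+1\}\big)\cong\big(\I^{\ul{n}},\ \{\vec t:\exists i,\ t_i=1\}\cup\{\vec 0\}\big),
\]
where the vertex $n+1$ corresponds to $\vec 0\in\I^{\ul n}$ and the face $\Delta^S n+1$ ($S\subseteq\ul n$) corresponds to the subcube $\{t_i=0\ \forall i\in\ul{n+1}\sm S\}$. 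I would choose this homeomorphism once and for all (it is the barycentric-type subdivision matching of a simplex-cone with a cube). A point $f\in\pF_{n+1}(M)$ is then the same as a map $F\colon\I^{\ul n}\to\Embp(J_0,M_{0\ul{n+1}})$ obtained by restricting $f^{\ul{n+1}}$ to this star; the face condition in Definition~\ref{def:rpn}(1), namely that $\Delta^{Sn+1}$ lands in $\rho_{Sn+1}^{\ul{n+1}\sm(Sn+1)}(\Embp(J_0,M_{0Sn+1}))$, translates exactly to $F(\vec t)\in\Embp(J_0,M_{0Sn+1})$ whenever $t_i=0$ for all $i\in\ul{n+1}\sm S$; and the condition that $f$ restricted to the faces \emph{not} containing $n+1$ is constantly $\u|_{J_0}$ translates to $F(\vec t)=\u|_{J_0}$ whenever some $t_i=1$ (these are precisely the faces $\I^{\ul n}$-points lying over $\mathrm{link}(n+1)$, i.e.\ over the faces of $\Delta^{\ul{n+1}}$ opposite $n+1$, where $f$ was required to be constant). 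This gives the second, cubical description~\eqref{eq:tofib-pt}.

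The main obstacle, and the only place requiring genuine care, is verifying that the two descriptions carry the \emph{same topology} and that the dictionary is a homeomorphism rather than merely a bijection on point-sets: one must check that the simplicial face-compatibility conditions on $f^{\ul{n+1}}$ over the whole of $\Delta^{\ul{n+1}}$ are equivalent, under the chosen homeomorphism of the star with the cube and using that $f$ is constant off the star, to the stated conditions on $F$ alone — in particular that no information is lost by remembering only the restriction to $\mathrm{star}(n+1)$. This is exactly the content of the corresponding statement in \cite{KT-rpn}, so in the write-up I would set up the homeomorphism explicitly and then cite \cite{KT-rpn} for the compatibility bookkeeping, noting that the fibration statement also follows from the fact that restriction to a face of a simplex is a fibration of the relevant mapping-space models.
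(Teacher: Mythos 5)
The paper does not prove this lemma at all: it is imported verbatim from \cite{KT-rpn}, so there is no in-paper argument to compare against. Your unravelling is the expected one and is essentially correct: the first assertion is indeed immediate from Definition~\ref{def:rpn} once one knows $\p_{n+1}$ is a fibration (for which one needs the path-lifting to respect \emph{all} face conditions, i.e.\ a retraction/collar of $(\Delta^{\ul{n+1}},\Delta^{\ul n})$ compatible with the faces $\Delta^{Sn+1}$; the cone structure of $\Delta^{\ul{n+1}}$ over $\Delta^{\ul n}$ provides this), and the cubical description is a pure dictionary: faces containing the vertex $n+1$ correspond to $0$-faces of $\I^{\ul n}$, the opposite face $\Delta^{\ul n}$ to the union of $1$-faces, and the conditions on faces $\Delta^S$ with $n+1\notin S$ are vacuous because there $f$ is the basepoint.

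Two small points of care. First, your description of the region is internally inconsistent: the union of the faces $\Delta^{Sn+1}$, $S\subsetneq\ul n$, is the cone on $\partial\Delta^{\ul n}$ and is only $(n-1)$-dimensional, so it cannot be homeomorphic to $\I^{\ul n}$; what you want is either the \emph{whole} simplex $\Delta^{\ul{n+1}}$ (the cone on $\Delta^{\ul n}$ with cone point $n+1$), identified with $\I^{\ul n}$ by a homeomorphism sending $\Delta^{Sn+1}$ to the $0$-face indexed by $S$ and $\Delta^{\ul n}$ to the union of $1$-faces — this gives an honest homeomorphism between $\pF_{n+1}(M)$ and the space of maps $F$ — or else the subcube of the barycentric subdivision containing the vertex $n+1$, which is how the present paper uses the lemma (see the proof of Theorem~\ref{thm:grasper-gives-path} and Remark~\ref{rem:previous-approach}). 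Second, in the latter reading restriction to the subcube is \emph{not} injective on the fibre (a point of $\pF_{n+1}(M)$ is only required to be constant on $\Delta^{\ul n}$, not off the subcube), so "equivalently described" is then a homotopy equivalence, with inverse given by extending $F$ by the basepoint and the pushing-out deformation that the paper spells out in the proof of Theorem~\ref{thm:grasper-gives-path}; you correctly flag this as the place needing care and defer it to \cite{KT-rpn}, which is exactly what the paper itself does.
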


\subsection{Knotted families and the Taylor tower for the arc}\label{subsec:paths}

Recall from Definition~\ref{def:knotted-family} that the realisation map $\realmap_n$
sends a decorated tree $\Gamma^{g_{\ul{n}}}\in\Tree_{\pi_1M}(n)$ to the homotopy class of
\[
    \u_{\TG,\Gamma^{g_{\ul{n}}}}=\u|_{\D^1\sm J_0}\cup(\TG_n\circ\mu_{\Gamma,d})|_{\S^1_-}\colon\S^{n(d-3)}\to\Embp(\D^1,M),
\]
defined using the multi-family $\mu_{\Gamma,d}\colon\S^{n(d-3)}\to\Emb(\S^1,\ball^d_n)$ and any grasper $\TG_n\colon\ball^d_n\hra M$ on $\u$ decorated by $g_{\ul{n}}$. In fact, since $\u_{\TG,\Gamma^{g_{\ul{n}}}}$ has image in the subspace of arcs that agree with $\u$ on $\D^1\sm J_0$ (recall from Remark~\ref{rem:knots-J-0} that this subspace is homotopy equivalent to the whole space $\Embp(\D^1,M)$), we have, abusing notation:
\begin{align}\label{eq:realmap}
    \realmap_n\colon
    \Z[\Tree_{\pi_1M}(n)] &\ra\pi_{n(d-3)}\Embp(J_0,M_0),\\
    \Gamma^{g_{\ul{n}}} &\mapsto[\u_{\TG,\Gamma^{g_{\ul{n}}}}=(\TG_n\circ\mu_{\Gamma,d})|_{\S^1_-}].\nonumber
\end{align}
This can be postcomposed with the map $\ev_{n+1}\colon\Embp(J_0,M_0)\to\rpT_{n+1}(M)$ from Definition~\ref{def:rpn}.

\begin{thm}\label{thm:grasper-gives-path}
    The map $\ev_{n+1}(\u_{\TG,\Gamma^{g_{\ul{n}}}})\colon\S^{n(d-3)}\to\rpT_{n+1}(M)$ lifts to the fibre $\pF_{n+1}(M)$. Moreover, it is homotopic to the map
    \[
    \u^\star_{\TG,\Gamma^{g_{\ul{n}}}}\colon\S^{n(d-3)}\ra
    \pF_{n+1}(M)\subset \Map\big(\I^{\ul{n}},\Embp(J_0,M_{0\ul{n+1}})\big)
    \]
    whose adjoint is the composite:
    \begin{equation}\label{eq:grasper-gives-path}
    \begin{tikzcd}
        \u^\star_{\TG,\Gamma^{g_{\ul{n}}}}\colon\I^{\ul{n}}\times\S^{n(d-3)}\rar{\mu^\star_{\Gamma,d}} & 
        \Emb(\S^1,\ball^d_n)\rar{\TG_n\circ-} & 
        \Emb(\S^1,M)\rar{-|_{\S^1_-}} &
        \Embp(J_0,M_{0\ul{n+1}}).
    \end{tikzcd}
    \end{equation}
\end{thm}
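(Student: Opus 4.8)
The plan is to verify the two assertions of Theorem~\ref{thm:grasper-gives-path} in turn: first that $\ev_{n+1}(\u_{\TG,\Gamma^{g_{\ul{n}}}})$ factors through the fibre $\pF_{n+1}(M)$, and second that the lift is homotopic to the explicit map $\u^\star_{\TG,\Gamma^{g_{\ul{n}}}}$ described by \eqref{eq:grasper-gives-path}. For the first part, recall that by Lemma~\ref{lem:rpn-fiber} a point of $\rpT_{n+1}(M)$ lies in $\pF_{n+1}(M)$ precisely when its restriction to the union of faces of $\Delta^{\ul{n+1}}$ not containing the vertex $n+1$ is the constant map $\ev_n(\u|_{J_0})^{\ul{n}}$. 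Now $\ev_{n+1}(K)$ is by definition the constant map $\const_{\rho^{\ul{n+1}}_\emptyset(K)}$ on all of $\Delta^{\ul{n+1}}$, so this is immediate once we observe that the composite $\Embp(J_0,M_0)\to\rpT_{n+1}(M)\xrightarrow{\p_{n+1}}\rpT_n(M)$ sends $\u_{\TG,\Gamma^{g_{\ul{n}}}}(\vec\tau)$ to the basepoint $\ev_n(\u)$; this in turn holds because on $M_0$ the arc $\u_{\TG,\Gamma^{g_{\ul{n}}}}(\vec\tau)$ is isotopic (rel boundary) to $\u|_{J_0}$, the isotopy being supplied by pulling the foliating circles of $\mu_{\Gamma,d}$ back across the meridian balls once the punctures are absent --- equivalently, by property $(iii)$ of Theorem~\ref{thm:multi-family} combined with the discussion in Section~\ref{sec:examples}. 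So the first assertion will follow formally from the model, with the geometric content packaged into Theorem~\ref{thm:multi-family}.

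For the second part, I would build the homotopy directly using the multi-family $\mu^\star_{\Gamma,d}$ from Theorem~\ref{thm:multi-family}. The adjoint description in Lemma~\ref{lem:rpn-fiber}~\eqref{eq:tofib-pt} tells us that a point of $\pF_{n+1}(M)$ is a map $F\colon\I^{\ul{n}}\to\Embp(J_0,M_{0\ul{n+1}})$ with $F(\vec t)\in\Embp(J_0,M_{0Sn+1})$ whenever $t_i=0$ for all $i\notin S$, and $F(\vec t)=\u|_{J_0}$ whenever some $t_i=1$. The candidate family $\u^\star_{\TG,\Gamma^{g_{\ul{n}}}}$ from \eqref{eq:grasper-gives-path} is precisely $\big((\TG_n\circ -)|_{\S^1_-}\big)\circ\mu^\star_{\Gamma,d}$, and the key point is that it satisfies exactly these boundary conditions: property $(ii)$ of Theorem~\ref{thm:multi-family} gives $\mu^\star_{\Gamma,d}(\vec t,-)=\const_{c_0}$ when some $t_i=1$, so the composite equals $\u|_{J_0}$ there; and property $(iii)$ controls the intersection of $\mu^\star_{\Gamma,d}(\vec t,\vec\tau)$ with each $a_i$, which under the grasper $\TG_n$ becomes the intersection with $\u(J_i)$ via \eqref{eq:G-condition}, so that for $t_i=0$ (hence $t_i\neq\tfrac12$) the family avoids $\u(J_i)$ and therefore lands in $M_{0Sn+1}$ as required. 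Thus $\u^\star_{\TG,\Gamma^{g_{\ul{n}}}}$ genuinely defines a map $\S^{n(d-3)}\to\pF_{n+1}(M)$, which is the content of the displayed target.

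It then remains to produce the homotopy between the two lifts $\ev_{n+1}(\u_{\TG,\Gamma^{g_{\ul{n}}}})$ and $\u^\star_{\TG,\Gamma^{g_{\ul{n}}}}$ inside $\pF_{n+1}(M)$. Here I would use $\mu^\star_{\Gamma,d}$ as an explicit nullhomotopy in the layer: the first lift corresponds under the adjunction of Lemma~\ref{lem:rpn-fiber} to the constant cube $F\equiv\u_{\TG,\Gamma^{g_{\ul{n}}}}(\vec\tau)$ (which lies in $\pF_{n+1}(M)$ once we check it lands in the right $M_{0Sn+1}$, again by property $(iii)$ applied at $\vec t=\vec 0$, i.e. the identity $\mu_{\Gamma,d}=\mu^\star_{\Gamma,d}(\vec 0,-)$), while the second is the cube $\vec t\mapsto (\TG_n\circ\mu^\star_{\Gamma,d}(\vec t,\vec\tau))|_{\S^1_-}$. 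Both agree at $\vec t=\vec 0$, and both take the constant value $\u|_{J_0}$ on the $1$-faces $\{t_i=1\}$, so a homotopy between them amounts to a homotopy of cubes rel the face $\vec t=\vec 0$ and rel the $1$-faces; one can construct it by reparametrising, scaling the $\I^{\ul n}$-parameter toward $\vec 0$, which continuously deforms the constant cube into the $\mu^\star$-cube while staying in $\pF_{n+1}(M)$ by the same two properties of Theorem~\ref{thm:multi-family}. The main obstacle I anticipate is bookkeeping: one must check at every stage of this deformation that the relevant arc meets only the arcs $a_i$ with $i$ in the allowed subset $S$ (so that the composite really stays in $M_{0Sn+1}$ and not merely in $M_{0\ul{n+1}}$), and that the corner/smoothness conventions of Remarks~\ref{rem:smooth-a0} and~\ref{rem:smooth-emb-comm} are respected throughout; but this is exactly what properties $(ii)$ and $(iii)$ of Theorem~\ref{thm:multi-family} were engineered to guarantee, together with the defining condition \eqref{eq:G-condition} of a grasper. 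Finally, for $C=\S^1$ one invokes Remark~\ref{rem:arc-to-circle} to reduce to the arc case just proved.
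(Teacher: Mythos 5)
Your middle step --- checking via properties $(ii)$ and $(iii)$ of Theorem~\ref{thm:multi-family} and the grasper condition \eqref{eq:G-condition} that $\u^\star_{\TG,\Gamma^{g_{\ul{n}}}}$ satisfies the boundary conditions of Lemma~\ref{lem:rpn-fiber} and hence defines a map $\S^{n(d-3)}\to\pF_{n+1}(M)$ --- is exactly the paper's argument. But the other two steps contain a genuine error. The point $\ev_{n+1}(\u_{\TG,\Gamma^{g_{\ul{n}}}})(\vec{\tau})$ is the \emph{constant} map $\Delta^{\ul{n+1}}\to\Embp(J_0,M_{0\ul{n+1}})$ with value $\u_{\TG,\Gamma^{g_{\ul{n}}}}(\vec{\tau})$, so its image under $\p_{n+1}$ is $\const_{\rho^{\ul{n}}_\emptyset(\u_{\TG,\Gamma^{g_{\ul{n}}}}(\vec{\tau}))}$, which is \emph{not} the basepoint $\ev_n(\u)$: the arc $\u_{\TG,\Gamma^{g_{\ul{n}}}}(\vec{\tau})$ is merely isotopic to $\u|_{J_0}$, not equal to it, and $\pF_{n+1}(M)$ is the strict fibre. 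For the same reason the ``constant cube'' $F\equiv\u_{\TG,\Gamma^{g_{\ul{n}}}}(\vec{\tau})$ is not a point of $\pF_{n+1}(M)$ --- it violates the $1$-face condition $F(\vec{t})=\u|_{J_0}$ of Lemma~\ref{lem:rpn-fiber} --- so your proposed homotopy ``rel the $1$-faces, staying in $\pF_{n+1}(M)$'' cannot exist: the two cubes disagree on the $1$-faces, and one endpoint is not in the fibre at all. Note also that deducing the lift from nullhomotopy of $\ev_n(\u_{\TG,\Gamma^{g_{\ul{n}}}})$ in $\rpT_n(M)$ is backwards (that nullhomotopy is the corollary the paper \emph{deduces} from this theorem), and a pointwise isotopy of each arc to $\u|_{J_0}$ is not a family statement over $\S^{n(d-3)}$.

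What is actually needed, and what the paper does, is a homotopy from $\u^\star_{\TG,\Gamma^{g_{\ul{n}}}}$ to $\ev_{n+1}(\u_{\TG,\Gamma^{g_{\ul{n}}}})$ inside $\rpT_{n+1}(M)$ (necessarily leaving the layer), using the simplex model: since the value of $\u^\star_{\TG,\Gamma^{g_{\ul{n}}}}$ at the vertex $n+1$ (the corner $\vec{t}=\vec{0}$ of the subcube $\I^{\ul{n}}\subseteq\Delta^{\ul{n+1}}$) is $\u_{\TG,\Gamma^{g_{\ul{n}}}}(\vec{\tau})$, one gradually extends this value from that vertex over the simplex, pushing all other values out through the opposite face. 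The verification that each intermediate stage is a point of $\rpT_{n+1}(M)$ --- i.e.\ that the restriction to every face $\Delta^S$ lies in the correct subspace $\rho(\Embp(J_0,M_{0S}))$ --- rests on the fact that $\u^\star_{\TG,\Gamma^{g_{\ul{n}}}}$ never crosses $\u|_{J_{n+1}}$. Your alternative of rescaling the $\I^{\ul{n}}$-coordinate toward $\vec{0}$ does not achieve this: at intermediate times the rescaled cube fails the $1$-face condition, so it is not a path of points of $\pF_{n+1}(M)$, and without the extension over the whole simplex it does not define a path in $\rpT_{n+1}(M)$ either. So steps 1 and 3 of your proposal need to be replaced along these lines.
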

Roughly speaking, $\u^\star_{\Gamma,\TG}$ takes $(\vec{t},\vec{\tau})\in\I^{\ul{n}}\times\S^{n(d-3)}$ to the arc $J_0\hra M_{0\ul{n}}$ that near $\u(J_i)$ looks like $\mu^\star_d(t_i,\tau_i)|_{\S^1_-}$, the time $t_i$ of the foliation of the meridian ball bounded by $\mu_d(\tau_i)$, see Section~\ref{subsec:foliations}.
\begin{proof}
    The properties $(ii)$ and $(iii)$ of the family $\mu^\star_{\Gamma,d}$ from Theorem~\ref{thm:multi-family} ensure that:
    \begin{enumerate}[label=(\arabic*)]
            \item for $\vec{t}\in\I^{\ul{n}}$ with some $t_i=1$ we have $\u^\star_{\Gamma,\TG}(\vec{t},-)=\const_{\u|_{J_0}}$;
            \item for $\vec{t}\in\I^{\ul{n}}$ and $\vec{\tau}\in\S^{n(d-3)}$, the embedding $\u^\star_{\Gamma,\TG}(\vec{t},\vec{\tau})\colon J_0\hra M_{0\ul{n+1}}$ intersects $J_i$ only if 
    $t_i=\frac{1}{2}$. In particular, if $t_i=0$ then $\u^\star_{\Gamma,\TG}(\vec{t},\vec{\tau})$ does not intersect $J_i$;
            \item for $\vec{0}\in\I^{\ul{n}}$ we have $\u^\star_{\Gamma,\TG}(\vec{0},-)=\u_{\TG,\Gamma^{g_{\ul{n}}}}=(\TG_n\circ\mu_{\Gamma,d})|_{\S^1_-}$.
    \end{enumerate}
    
    The conditions $(1)$ and $(2)$ ensure that $\u^\star_{\Gamma,\TG}(-,\vec{\tau})$ defines a point in $\pF_{n+1}(M)$ as in Lemma~\ref{lem:rpn-fiber}, for any $\vec{\tau}\in\S^{n(d-3)}$. Recall from that lemma that this is viewed as a point in $\rpT_{n+1}(M)$ using the inclusion $\I^{\ul{n}}\subseteq\Delta^{\ul{n+1}}$ as the subcube containing the vertex $n+1$ in the barycentric subdivision.
    
    On the other hand, for any $\vec{\tau}\in\S^{n(d-3)}$ the point $\ev_{n+1}(\u_{\TG,\Gamma^{g_{\ul{n}}}})(\vec{\tau})\in\rpT_{n+1}(M)$ is given as the constant map $\Delta^{\ul{n+1}}\to\Embp(J_0,M_{0\ul{n+1}})$ with value $\u_{\TG,\Gamma^{g_{\ul{n}}}}(\vec{\tau})$. Since the value of $\u^\star_{\Gamma,\TG}$ at the vertex $n+1$ is $\u_{\TG,\Gamma^{g_{\ul{n}}}}(\vec{\tau})$ by the condition $(2)$, we can define a homotopy from $\u^\star_{\Gamma,\TG}$ to $\ev_{n+1}(\u_{\TG,\Gamma^{g_{\ul{n}}}})$ by gradually extending by this value from the vertex $n+1$ towards the rest of the simplex and eventually pushing out all other values through the face opposite to it. This is a well-defined path in $\rpT_{n+1}(M)$ since at all times the restriction to the faces is contained in the appropriate submanifold $M_{0S}$, using the fact that in $\u^\star_{\Gamma,\TG}$ the interval $\u|_{J_{n+1}}$ was never crossed.    
\end{proof}

\begin{cor}
    The maps $\ev_n(\u_{\TG,\Gamma^{g_{\ul{n}}}})$ and $\ev_n(\const_{\u|_{J_0}})$ are homotopic in $\rpT_n(M)$. In particular, the map $\realmap_n$ from \eqref{eq:realmap} lifts to a map \begin{align}\label{eq:lift}
        \realmap_n\colon\Z[\Tree_{\pi_1M}(n)] & \ra \pi_{n(d-3)+1}(\rpT_n(M),\Embp(C,M)),\\
        \Gamma^{g_{\ul{n}}} & \mapsto [\u_{\TG,\Gamma^{g_{\ul{n}}}}^\star].\nonumber
    \end{align}
\end{cor}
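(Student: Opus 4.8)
The plan is to derive both assertions formally from Theorem~\ref{thm:grasper-gives-path} by pushing the homotopy produced there forward along the fibration $\p_{n+1}\colon\rpT_{n+1}(M)\to\rpT_n(M)$, using that $\ev_n=\p_{n+1}\circ\ev_{n+1}$ and that $\pF_{n+1}(M)$ is the fibre of $\p_{n+1}$ over the basepoint $\ev_n(\u|_{J_0})$. First I would invoke Theorem~\ref{thm:grasper-gives-path} to get a homotopy $H$ in $\rpT_{n+1}(M)$ from $\ev_{n+1}(\u_{\TG,\Gamma^{g_{\ul{n}}}})$ to $\u^\star_{\TG,\Gamma^{g_{\ul{n}}}}$, the latter being valued in $\pF_{n+1}(M)$. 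Applying $\p_{n+1}$ then gives a homotopy $\p_{n+1}\circ H$ in $\rpT_n(M)$ whose source is $\p_{n+1}\circ\ev_{n+1}(\u_{\TG,\Gamma^{g_{\ul{n}}}})=\ev_n(\u_{\TG,\Gamma^{g_{\ul{n}}}})$ and whose target is $\p_{n+1}\circ\u^\star_{\TG,\Gamma^{g_{\ul{n}}}}$; since $\u^\star_{\TG,\Gamma^{g_{\ul{n}}}}$ lands in the fibre over $\ev_n(\u|_{J_0})$, this target is literally the constant map at that point, i.e.\ $\ev_n(\const_{\u|_{J_0}})$ under the identification of Remark~\ref{rem:knots-J-0}. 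This proves the first assertion.

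For the lift I would unwind the definition of the relative homotopy group of the map $\ev_n\colon\Embp(C,M)\simeq\Embp(J_0,M_0)\to\rpT_n(M)$: a class in $\pi_{n(d-3)+1}(\rpT_n(M),\Embp(C,M))$ is the same as a map $\alpha\colon\S^{n(d-3)}\to\Embp(C,M)$ together with a based nullhomotopy of $\ev_n\circ\alpha$ in $\rpT_n(M)$, equivalently a point of $\pi_{n(d-3)}$ of the homotopy fibre of $\ev_n$. I take $\alpha=\u_{\TG,\Gamma^{g_{\ul{n}}}}$ --- which has image in the subspace of arcs agreeing with $\u$ off $J_0$, equivalent to $\Embp(\D^1,M)$ by Remark~\ref{rem:knots-J-0}, the case $C=\S^1$ reducing to $C=\D^1$ via Remark~\ref{rem:arc-to-circle} --- together with the nullhomotopy $\p_{n+1}\circ H$ from the first step. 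Comparing the fibre sequence $\pF_{n+1}(M)\hra\rpT_{n+1}(M)\to\rpT_n(M)$ with the triangle $\Embp(J_0,M_0)\to\rpT_{n+1}(M)\to\rpT_n(M)$ (whose composite is $\ev_n$), this pair is exactly the data recorded by $\u^\star_{\TG,\Gamma^{g_{\ul{n}}}}$, so it represents $[\u^\star_{\TG,\Gamma^{g_{\ul{n}}}}]$; moreover the boundary map $\pi_{n(d-3)+1}(\rpT_n(M),\Embp(C,M))\to\pi_{n(d-3)}\Embp(C,M)$ sends it to $[\u_{\TG,\Gamma^{g_{\ul{n}}}}]$, so the new $\realmap_n$ genuinely lifts the one of \eqref{eq:realmap}. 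Finally, $\Z$-linearity is inherited from Definition~\ref{def:knotted-family}: a linear combination of decorated trees is realised by a disjoint collection of (suitably oriented) graspers in disjoint balls meeting $\u$ in disjoint subintervals of $J_0$, and since the families $\mu^\star_{\Gamma,d}$ and the nullhomotopies above are supported near the respective graspers, the associated classes simply add.

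I do not expect a genuine obstacle: all the geometry lives in Theorems~\ref{thm:multi-family} and~\ref{thm:grasper-gives-path}, and this corollary is a formal consequence of the fibration sequence. The points needing care are purely bookkeeping: (i) checking that $\p_{n+1}\circ\u^\star_{\TG,\Gamma^{g_{\ul{n}}}}$ is constant on the nose --- immediate from $\u^\star_{\TG,\Gamma^{g_{\ul{n}}}}$ being valued in $\pF_{n+1}(M)$ --- so that one obtains an actual nullhomotopy rather than merely a homotopy into the basepoint component; (ii) tracking the basepoint $\u$ through the identifications $\Embp(C,M)\simeq\Embp(J_0,M_0)$, the map $\ev_{n+1}\colon\Embp(J_0,M_0)\to\rpT_{n+1}(M)$, and the inclusion $\pF_{n+1}(M)\hra\rpT_{n+1}(M)$; and (iii) the standard translation between ``map to the homotopy fibre of $\p_{n+1}$ plus homotopy to $\ev_{n+1}\circ\alpha$'' and ``element of $\pi_\ast(\rpT_n(M),\Embp(C,M))$'', which is the comparison of the two mapping-fibre descriptions furnished by the commuting triangle above.
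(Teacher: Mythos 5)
Your proposal is correct and follows essentially the same route as the paper, which treats this corollary as an immediate consequence of Theorem~\ref{thm:grasper-gives-path}: project the homotopy to $\u^\star_{\TG,\Gamma^{g_{\ul{n}}}}$ along the fibration $\p_{n+1}$ to get the nullhomotopy of $\ev_n(\u_{\TG,\Gamma^{g_{\ul{n}}}})$, and package the family together with that nullhomotopy as the relative class, exactly the translation spelled out in Remark~\ref{rem:previous-approach} (via the identification $\pi_{n(d-3)+1}(\rpT_n(M),\rpT_{n+1}(M))\cong\pi_{n(d-3)}\pF_{n+1}(M)$). Your closing discussion of realising linear combinations by disjoint graspers is unnecessary, since $\realmap_n$ and its lift are defined on the free abelian group $\Z[\Tree_{\pi_1M}(n)]$ by linear extension on generators.
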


\begin{rem}\label{rem:previous-approach}
    In~\cite{K-thesis-paper} we took a slightly different approach. Namely, the last theorem can be restated as the existence of a path $\Psi^\TG\colon[0,1]\to \Omega^{n(d-3)}\rpT_n(M)$
    from $\ev_n(\u_{\TG,\Gamma^{g_{\ul{n}}}})\in\Omega^{n(d-3)}\rpT_n(M)$ to $\ev_n(\const_\u)$. This then defines a point
    \[
        [\Psi^\TG,\u_{\TG,\Gamma^{g_{\ul{n}}}}]\in \pi_{n(d-3)+1}(\rpT_n(M),\Embp(J_0,M_0)),
    \]
    and applying $\evrel_{n+1}$ gives a point
    \[
    [\Psi^\TG,\ev_{n+1}(\u_{\TG,\Gamma^{g_{\ul{n}}}})]\in\pi_{n(d-3)+1}(\rpT_n(M),\rpT_{n+1}(M)).
    \]
    Since $\p_{n+1}\colon \rpT_{n+1}(M)\to \rpT_n(M)$ is a fibration, we have
    \[
    \pi_{n(d-3)+1}(\rpT_n(M),\rpT_{n+1}(M))\cong\pi_{n(d-3)}\pF_{n+1}(M),
    \]
    and $[\Psi^\TG,\ev_{n+1}(\u_{\TG,\Gamma^{g_{\ul{n}}}})]$ exactly corresponds to $[\u_{\TG,\Gamma^{g_{\ul{n}}}}^\star]\in\pi_{n(d-3)}\pF_{n+1}(M)$. This follows by making the last isomorphism explicit, which is related to Lemma~\ref{lem:rpn-fiber}, see \cite[Lem.2.10]{K-thesis-paper}.
\end{rem}

\section{Proof of the main result}\label{sec:proof}

In Section~\ref{subsec:thesis} we summarise the results from \cite{K-thesis-paper} that will be used.
In Section~\ref{subsec:main-proof-arc} we prove the case $C=\D^1$ of the main Theorem~\ref{thm:main}, and in Section~\ref{subsec:main-proof-circle} we use that to deduce the case $C=\S^1$. In Section~\ref{subsec:proofs-cor} we prove Corollaries~\ref{cor:main} and ~\ref{cor:Dd}.

\subsection{A recollection}\label{subsec:thesis}

Recall that Theorem~\ref{thm:main} states that the map~\eqref{eq:lift} factors through the quotient by the relations $(\AS)$ and $(\IHX)$ from Section~\ref{subsec:trees}, and
that postcomposing the resulting map out of the group of decorated Lie trees $\Lie_{\pi_1M}(n)$ with the homomorphism
\[\begin{tikzcd}
     \pi_{n(d-3)+1}(\rpT_n(M),\Embp(J_0,M_0))\rar{\evrel_{n+1}} &\pi_{n(d-3)+1}(\rpT_n(M),\rpT_{n+1}(M))\cong\pi_{n(d-3)}\pF_{n+1}(M)
\end{tikzcd}
\]
and then an explicit isomorphism $W(\chi\circ\deriv)_*\colon \pi_{n(d-3)}\pF_{n+1}(M)\overset{\cong}{\ra} \Lie_{\pi_1M}(n)$
gives the identity. In this section we briefly discuss this last isomorphism from our previous work.

\begin{thm}[{\cite[Thm.B]{K-thesis-paper}}]
\label{thm:thesis}
    For any $d\geq3$ and $n\geq1$, the lowest nonvanishing homotopy group of the layer $\pF_{n+1}(M)$ in the Taylor tower for $\Embp(\D^1,M)$ admits an isomorphism
        \begin{equation}\label{eq:F-Lie}
        \begin{tikzcd}
            \pi_{n(d-3)}\pF_{n+1}(M)\rar{(\chi\circ\deriv)_*}[swap]{\sim} & \pi_{n(d-2)}\tofib(\Omega M_{\bull},\Omega\lambda)\rar{W}[swap]{\sim}
            & \Lie_{\pi_1M}(n),
        \end{tikzcd}
        \end{equation}
\end{thm}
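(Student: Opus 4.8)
The plan is to compute the bottom homotopy group of the layer in three moves: reduce $\pF_{n+1}(M)$ to a total homotopy fibre of an $n$-cube of punctured knot spaces, compare those embedding spaces with loop spaces of the corresponding complements inside the total fibre, and then identify what is left by a Hilton--Milnor computation. First I would invoke Lemma~\ref{lem:rpn-fiber} and the structure of the reduced punctured knots from \cite{KT-rpn}: a point of $\pF_{n+1}(M)$ is a map $\I^{\ul n}\to\Embp(J_0,M_{0\ul{n+1}})$ with the indicated face conditions, so that $\pF_{n+1}(M)$ is the total homotopy fibre of the $n$-cube
\[
    S\subseteq\ul n\ \longmapsto\ \Embp\big(J_0,M_{0S(n+1)}\big),
\]
with structure maps the $\rho$ of Definition~\ref{def:M-T} and the puncture $n+1$ present throughout to fix the basepoint. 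This is only a reorganisation of the definitions, but it is the crucial one: it exhibits $\pF_{n+1}(M)$ in the form to which embedding calculus and Blakers--Massey-type estimates apply.

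\textbf{The map $\chi\circ\deriv$: replacing embeddings by loops in the complement.} As in embedding calculus \cite{Weiss,GW}, the $n$-cube above is, through the relevant range, an $n$-fold loop space of a more ``linear'' $n$-cube; this is what produces the degree shift $n(d-3)\rightsquigarrow n(d-3)+n=n(d-2)$, and it is encoded in the passage $\deriv$ together with the reindexing $\chi$. Inside the linear cube one compares the arc-embedding spaces with the spaces of \emph{maps} of the arc into $M_{0S(n+1)}$, i.e.\ with the loop spaces $\Omega M_{0S(n+1)}$; entrywise this comparison is only about $(d-3)$-connected, but on total homotopy fibres the Goodwillie--Klein multiple disjunction estimates (the cubical refinement of the input to Theorem~\ref{thm:rpn}, \cite{GKmultiple}) push its connectivity up to $n(d-2)$, which is exactly where homotopy first appears. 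The Dax-type ``derivative'' map $\deriv$ is precisely this comparison, recording for a family of arcs its intersection data with the $n$ meridian spheres. Assembling these gives the isomorphism
\[
    (\chi\circ\deriv)_*\colon\quad \pi_{n(d-3)}\pF_{n+1}(M)\ \xrightarrow{\sim}\ \pi_{n(d-2)}\tofib(\Omega M_\bullet,\Omega\lambda).
\]

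\textbf{The map $W$: a decorated Hilton--Milnor computation.} Finally I would compute $\pi_{n(d-2)}\tofib(\Omega M_\bullet,\Omega\lambda)$. Passing from $S$ to $S\cup\{i\}$ attaches a $(d-1)$-cell along the meridian $\S^{d-2}$ of the puncture $J_i$, so the cube $M_\bullet$ is, in the range we care about, built by attaching $n$ cells, one in each cube direction; equivalently each $M_{0S(n+1)}$ retracts onto a wedge in which the $i$-th meridian appears as a new $(d-1)$-sphere once $i\in S$ (cf.\ the homotopy equivalence $\partial m\colon\bigvee_n\S^{d-2}\hra\ball^d\sm\bigsqcup_{i=1}^n a_i$ used in Section~\ref{sec:relation}). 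By the Hilton--Milnor theorem the bottom homotopy of the total homotopy fibre of such a cube, in degree $n(d-2)$, is the free abelian group on the $n$-fold iterated (graded Samelson, equivalently Whitehead) brackets of the $n$ meridian classes in which each class occurs exactly once --- all higher brackets and non-multilinear monomials lie in strictly larger degree and are annihilated by the total fibre. That multilinear piece is exactly $\Lie_d(n)\cong\Lie(n)$ by the classical combinatorics of the free graded Lie algebra (\cite[Ch.X]{Whitehead}, and \cite[Lem.2.3]{K-thesis-paper} for the identification with the nongraded $\Lie(n)$), the antisymmetry and Jacobi identity of the bracket producing the $(\AS)$ and $(\IHX)$ relations. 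Running the same argument with local coefficients --- bookkeeping for each bracket the path in $M$ along which it is formed --- decorates each leaf by an element of $\pi_1M$, upgrading the answer to $\Lie_{\pi_1M}(n)$; this identification is $W$.

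\textbf{Main obstacle.} The hard part will be the last step: running Hilton--Milnor $\pi_1M$-equivariantly and verifying that precisely the multilinear part survives in degree $n(d-2)$, with the bracket relations matching $(\AS)$ and $(\IHX)$ on the nose, graded signs included (cf.\ Remark~\ref{rem:graded-signs}). A close second is the connectivity accounting in the middle step, where one needs the sharp multiple disjunction estimate in order to know that $(\deriv)_*$ is an isomorphism, and not merely a surjection, in the critical degree $n(d-3)$.
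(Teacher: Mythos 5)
The first thing to say is that this paper does not prove Theorem~\ref{thm:thesis} at all: it is imported from \cite[Thm.B]{K-thesis-paper}, and the only argument offered here is the remark that the delooping map $\chi$ was constructed there for general cubical diagrams admitting left homotopy inverse diagrams, so the $d=3$ statements extend to all $d\geq3$. So your sketch is competing with the proof in the cited paper, not with an argument in this text. Measured against the description of the isomorphism given in Section~\ref{subsec:thesis}, your overall architecture (the layer as the total homotopy fibre of the cube $S\mapsto\Embp(J_0,M_{0S(n+1)})$ via Lemma~\ref{lem:rpn-fiber}, comparison with loop spaces of the complements via $\deriv$, and a $\pi_1M$-decorated bracket calculus for the bottom group) is the right one in spirit.

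However, there is a genuine gap: you never engage with the actual middle term of \eqref{eq:F-Lie}. The cube $(\Omega M_{\bull},\Omega\lambda)$ is the \emph{contravariant} cube whose maps ``erase then drag'' (Figure~\ref{fig:M-erase}); these $\lambda$'s are left homotopy inverses of the inclusions, and that extra structure is precisely what produces the delooping $\chi$ — the gluing of $2^n$ dragged reflections of a cube family into an $\S^n$-family (Figures~\ref{fig:reflections} and~\ref{fig:dragging-reflection}) — and hence the shift from degree $n(d-3)$ to $n(d-2)$. In your sketch this shift is attributed to ``$\deriv$ together with the reindexing $\chi$'', and your Hilton--Milnor step mixes the two pictures: $n$-fold brackets of the meridian $(d-2)$-spheres live in degree $n(d-3)$ of the covariant loop-space total fibre, whereas $\pi_{n(d-2)}\tofib(\Omega M_{\bull},\Omega\lambda)$ is generated by Samelson products of the $(d-1)$-spheres $\S_i=\partial\ball_{ii+1}$ as in \eqref{eq:final-gens}; relating the two is exactly the content of $\chi$, which you skip (note also that the meridian is not ``a new $(d-1)$-sphere''). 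Since the theorem asserts that these \emph{specific} maps $(\chi\circ\deriv)_*$ and $W$ are isomorphisms — and Section~\ref{sec:proof} works by evaluating exactly these maps on grasper classes — an abstract isomorphism of groups that bypasses them would not establish the statement in the form it is used. Secondly, your middle step is not correctly calibrated: what is needed is that the map of total fibres induced by $\Embp\to\Map$ is an \emph{isomorphism} in the critical degree $n(d-3)$; ``connectivity up to $n(d-2)$'' is not the right assertion before delooping, and the Goodwillie--Klein multiple disjunction estimates are not quoted in a form that yields injectivity there (this is the delicate input, and leaning on the full convergence machinery also cuts against the paper's deliberate separation of Theorem~\ref{thm:rpn} from the identification of the layers). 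Together with the $\pi_1M$-equivariant Hilton--Milnor identification, which you honestly defer as the ``main obstacle'', these are precisely the places where the work of \cite{K-thesis-paper} actually happens, so the proposal as it stands is an outline rather than a proof.
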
 

We first make a few remarks on the objects that appear in the statement. Firstly, $\Lie_{\pi_1M}(n)$ is the group of decorated Lie trees from Section~\ref{subsec:trees}. Secondly, the isomorphism $W$ will be discussed only later, see~\eqref{eq:final-gens}. Thirdly, $\Omega M_{\bull n+1}$ is the (contravariant) cube defined by the spaces $\Omega M_{Sn+1}$ for $S\subseteq\ul{n}$, where
\[  
    M_{Sn+1}\coloneqq M_{0Sn+1}\cup\ball_{0i_1},
\]
and maps $\Omega \lambda^k_S$ induced by the maps $\lambda^k_{S}\colon M_{Skn+1}\hra M_{Sn+1}$ that ``erase then drag'', see Figure~\ref{fig:M-erase}. 

Finally, a point in the total homotopy fibre $f\in\tofib(\Omega M_{\bull},\Omega\lambda)$ is by definition a collection of maps $f^S\colon\I^{\ul{n}\sm S}\to \Omega M_{Sn+1}$ such that $(\Omega \lambda^k_S)\circ f^{kS}=f^{S}|_{t_k=0}$, the restriction to the face $\I^{\ul{n}\sm Sk}\times \{0\}\subseteq\I^{\ul{n}\sm S}$ (and $f^{S}|_{t_i=1}=\const$, the basepoint). This collection is determined by $f\coloneqq f^{\ul{n}}\in\Omega M_{\ul{n+1}}$.
\begin{figure}[!htbp]
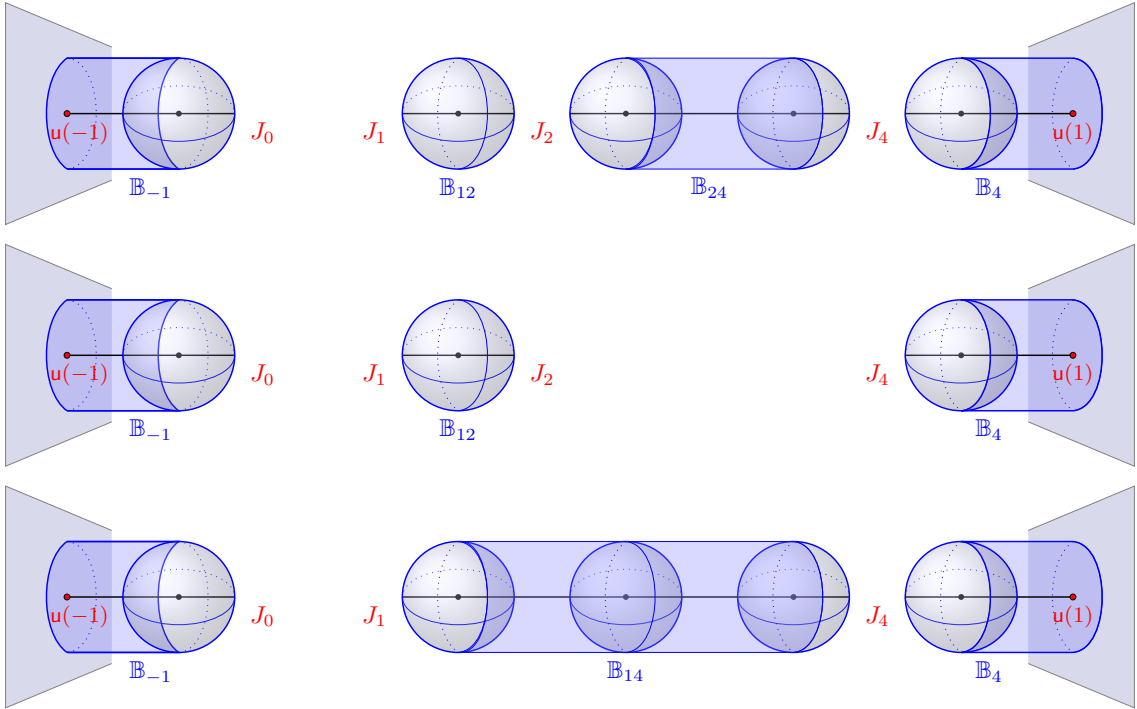

    \centering
    \includestandalone[mode=buildmissing,width=\linewidth]{fig-knotted/fig-M-erase}
    \caption[The map $\lambda^2_{1,4,5}$.]{ The map $\lambda^2_{1,4,5}$ takes the top manifold $M_{124}$ (obtained from $M$ by removing the depicted balls and half-balls) to the bottom manifold $M_{14}$. From the top to the middle we erase the ball $\ball_{24}$, and from the middle to the bottom we apply a diffeomorphism mapping $\partial\ball_{12}$ to $\partial\ball_{14}$. Here $d=3$.}
    \label{fig:M-erase}
\end{figure}


Let $C\colon\I^{\ul{n}}\to\Omega^{n(d-3)}\Embp(J_0,M_{0\ul{n+1}})$ be a map that defines a point in $\Omega^{n(d-3)}\pF_{n+1}(M)$ (as in~\eqref{eq:tofib-pt} from Lemma~\ref{lem:rpn-fiber}), and let us now describe the point $\chi\deriv(A)\in\Omega^{n(d-2)}\tofib(\Omega M_{\bull},\Omega\lambda)$ by giving a map
\[
    \chi\deriv(C)^{\ul{n+1}}\colon\S^n\to\Omega^{n(d-3)} \Omega M_{\ul{n+1}}.
\]
Firstly, let $\deriv\colon\Embp(J_0,M_{0\ul{n+1}})\to \Omega M_{\ul{n+1}}$ be the map that concatenates the given embedding $J_0\hra M_{0\ul{n+1}}$ with $\u|_{J_0}^{-1}$ to get a loop in $M_{0\ul{n+1}}$, and then includes it into $M_{\ul{n+1}}$. Consider
\begin{equation}\label{eq:final-pt-cube-digression}
\begin{tikzcd}
     \I^{\ul{n}}\rar{C} &
     \Omega^{n(d-3)}\Embp(J_0,M_{0\ul{n+1}})\rar{\deriv} &
     \Omega^{n(d-3)}\Omega M_{\ul{n+1}}.
\end{tikzcd}
\end{equation}
By~\cite[Rem.7.1]{K-thesis-paper} the desired map $\chi\deriv(C)^{\ul{n+1}}$ is obtained from $\deriv C$ by a gluing operation:
\begin{equation}\label{eq:drag-refl}
    \chi\deriv(C)=\glueOp_{S\subseteq\ul{n}}\refl_S^{drag}(\deriv C).
\end{equation}
Here we identify $\S^n$ with the quotient of $(\D^1)^n$ by the boundary, and as in Definition~\ref{def:glueOp} write $(\D^1)^n=\glueOp_{S\subseteq\ul{n}}[-1,0]^S\times\I^{\ul{n}\sm S}$.
However, whereas in that definition we used the reflections $\refl_S$ and maps $j_{W/E}^S$, now as the target instead of $\Omega\bigvee\S^{d-1}$ we have $\Omega M_{\ul{n+1}}$, so we need to define
\begin{equation}\label{eq:refl-drag}
    \refl_S^{drag}(\deriv C)\colon [-1,0]^S\times\I^{\ul{n}\sm S}\to \Omega M_{\ul{n+1}}.
\end{equation}
For this we use certain ``dragging'' homotopies to ``reflect'' the cube $\deriv C$ across the $0$-faces, as in See Figure~\ref{fig:reflections}, and with respect to the balls $\S_i\coloneqq\partial\ball_{ii+1}$ for $i\in S$, as in Figure~\ref{fig:dragging-reflection}. 
\begin{figure}[!htbp]
    \centering
    \includestandalone[mode=buildmissing,width=0.95\linewidth]{fig-knotted/fig-reflections}
    \caption{Gluing together the reflections of the cube family $\deriv C$ gives the sphere family $\chi\deriv(C)^{\ul{n+1}}$.}
    \label{fig:reflections}
\end{figure}

We explain how ``dragging'' looks near one of the balls $\ball_{ii+1}$; these are then combined analogously how various $j_E$'s are combined in the definition of $j_{W/E}^S$, see~\cite[Lem.4.9]{K-thesis-paper}.
In Figure~\ref{fig:dragging-reflection} we depicted $M_{12}$ and the map $\deriv C\colon\I\to\Omega^{d-3}\Omega M_{12}$ as a homotopy across the yellow disk (with $\I$ as the homotopy parameter). For simplicity we have $d=3$, but in general one should replace every loop by a $(d-3)$-parameter family parameterised by the meridian sphere $\S_1=\partial\ball_{12}$. The map $\refl_S^{drag}(\deriv C)$ is given by modifying the yellow disk, so that its lower part is replaced by the union of the arcs that swing around the other hemisphere of $\S_1$; in other words, gluing together the old homotopy and the new homotopy gives the sphere $\S_1$.
\begin{figure}[!htbp]
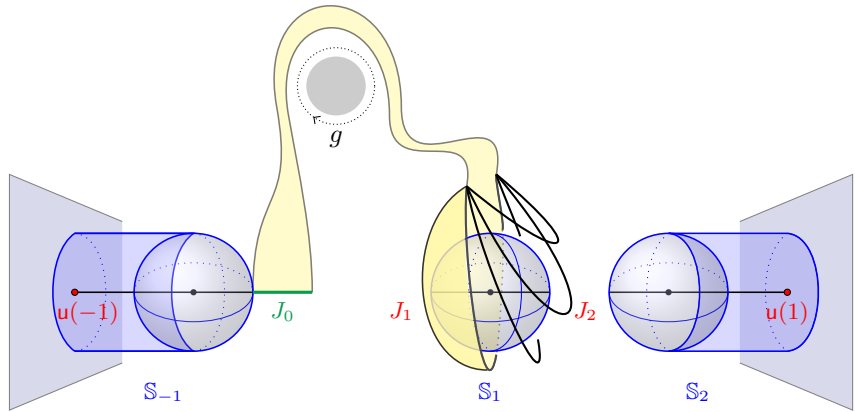

    \centering
    \includestandalone[mode=buildmissing,width=0.75\linewidth]{fig-knotted/fig-dragging-reflection}
    \caption[The dragging reflection.]{The dragging reflection.}
    \label{fig:dragging-reflection}
\end{figure}

\begin{rem}
    Some of statements were proven in \cite{K-thesis-paper} only for $d=3$, since that dimension was the main setting of that paper. However, they concern the delooping map $\chi$, which was constructed there even in a greater generality, for any cubical diagram which has a left homotopy inverse cubical diagram. Thus, it is immediate that the analogous results hold for all $d\geq3$.
\end{rem}

\subsection{Proof of the main theorem for the arc}\label{subsec:main-proof-arc}

We will first prove that the composite map $W(\chi\circ\deriv)_* \ev_{n+1} \realmap_n$ is the canonical quotient map $\Z[\Tree_{\pi_1M}(n)]\sra\Lie_{\pi_1M}(n)$.
Since all maps are linear, it suffices to show that for any $\Gamma^{g_{\ul{n}}}\in\Tree_{\pi_1M}(n)$ and the family $\u^\star_{\TG,\Gamma^{g_{\ul{n}}}}$ from \eqref{eq:grasper-gives-path} we have
\[
    W(\chi\circ\deriv)_*([\u^\star_{\Gamma,\TG}])=W([\chi\deriv\u^\star_{\TG,\Gamma^{g_{\ul{n}}}}])=[\Gamma^{g_{\ul{n}}}].
\]
Firstly, by the paragraph after Theorem~\ref{thm:thesis} the class $[\chi\deriv\u^\star_{\Gamma,\TG}]\in\pi_{n(d-2)}\tofib\Omega M_{\bull}$ can be represented by a point
    \begin{equation}\label{eq:final-pt}
        (\chi\deriv\u^\star_{\TG,\Gamma^{g_{\ul{n}}}})^{\ul{n}}\in\Omega^{n(d-2)}
        \Omega M_{\ul{n}}.
    \end{equation}
By \eqref{eq:drag-refl} this is in turn represented by a map $\S^n\to\Omega^{n(d-3)}\Omega M_{\ul{n}}$ which is  obtained  by $\glueOp$-gluing the ``dragging reflections'' $\refl_S^{drag}$ of the map
\begin{equation}\label{eq:final-pt-cube}
\begin{tikzcd}
     \I^{\ul{n}}\rar{\u_{\TG,\Gamma^{g_{\ul{n}}}}^\star} &
     \Omega^{n(d-3)}\Embp(J_0,M_{0\ul{n+1}})\rar{\deriv} &
     \Omega^{n(d-3)}\Omega M_{\ul{n+1}}.
\end{tikzcd}
\end{equation}
On the other hand, the inverse of the isomorphism $W$ takes the generating decorated tree $\Gamma^{g_{\ul{n}}}$ to the class obtained by canonically extending to $\tofib\Omega M_{\bull}$ the map
\begin{equation}\label{eq:final-gens}
\begin{tikzcd}[column sep = large]
    \S^{n(d-2)}\rar{x_{\Gamma,d-1}} &
    \Omega\bigvee_n\S^{d-1}\rar[hook]{\Omega \S(g_{\ul{n}})} &
    \Omega M_{\ul{n+1}},
\end{tikzcd}
\end{equation}
where $x_{\Gamma,d-1}$ is the Samelson product from \eqref{eq-def:x-Gamma} and $\S(g_{\ul{n}})\colon\bigvee_n\S^{d-1}\hra M_{\ul{n}}$ is the inclusion of the spheres $\S_i\coloneqq\partial\ball_{ii+1}$ between punctures $J_i$ and $J_{i+1}$ (see Figure~\ref{fig:nbhd-of-U-S}), together with the whisker obtained by conjugating the path $\u|_{[L_0,R_i]}$ by $g_i\in\pi_1M$. 
Moreover, by Proposition~\ref{prop:suspensions} the map $x_{\Gamma,d-1}$ is obtained by $\glueOp$-gluing the reflections of the cube $x^\star_{\Gamma,d-1}$. Thus, \eqref{eq:final-gens} is obtained by $\glueOp$-gluing the reflections $\refl_S$ of
\begin{equation}\label{eq:final-gens-cube}
\begin{tikzcd}[column sep = large]
    \I^{\ul{n}}\rar{x^\star_{\Gamma,d-1}} &
    \Omega^{n(d-3)}\Omega\bigvee_n\ball^{d-1}\arrow{rr}[hook]{\Omega^{n(d-3)}\Omega \S(g_{\ul{n}})} &&
    \Omega^{n(d-3)}\Omega M_{\ul{n+1}}.
\end{tikzcd}
\end{equation}
Therefore, to prove that \eqref{eq:final-pt} and \eqref{eq:final-gens} are homotopic, it suffices to prove that the maps \eqref{eq:final-pt-cube} and \eqref{eq:final-gens-cube} are homotopic, as well as their respective reflections, and that homotopies $\glueOp$-glue together.

That these two maps are homotopic will follow from Theorem~\ref{thm:analogues}, which gives a homotopy between $\mu^\star_{\Gamma,d}$ and $x^\star_{\Gamma,d-1}$ when both are included into $\Omega\ball^d_n$. Indeed, on one hand, since $\deriv$ includes embeddings into all maps and closes them into loops using $\u|_{J_0}^{-1}$, and since $\u^\star_{\Gamma,\TG}=(\TG_n\circ\mu^\star_{\Gamma,d})|_{\S^1_-}$ by definition in Theorem~\ref{thm:grasper-gives-path}, we conclude that the desired map $\deriv(\u_{\TG,\Gamma^{g_{\ul{n}}}}^\star)$ from \eqref{eq:final-pt-cube} is homotopic to the composition of the upper maps in the diagram:
\[
\begin{tikzcd}[column sep = large,row sep=small]
    & \Emb((\S^1,\S^1_+),(\ball^d_n,a_0))\arrow[hook]{dr} && \\ 
    \I^{\ul{n}}\times\S^{n(d-3)} \arrow{dr}[swap]{x^\star_{\Gamma,d-1}}\arrow{ur}{\mu^\star_{\Gamma,d}} && \Omega\ball^d_n\rar{\Omega\TG_n}\rar[swap]{} &
    \Omega M_{\ul{n}}. \\
    & \Omega\bigvee_n\ball^{d-1}\arrow[hook]{ur}\arrow[bend right=10]{urr}[swap]{\Omega \S(g_{\ul{n}})} &&
\end{tikzcd}
\]
On the other hand, the map~\eqref{eq:final-gens-cube} is the composite of the maps at the bottom. But now, the triangle on the right commutes up to homotopy since the underlying decoration of $\TG_n$ is precisely $g_{\ul{n}}$, and the square on the left commutes up to homotopy by Theorem~\ref{thm:analogues}.

Furthermore, the reflections of this homotopy form homotopies between the respective reflections. Indeed, when reflecting \eqref{eq:final-pt-cube} we also drag the meridian ball $\ball_i$ (which is the eastern hemisphere of the sphere $\S_i$), so that it becomes the western hemisphere, as discussed in Figure~\ref{fig:dragging-reflection}.

Therefore, we have shown that the composite map $W(\chi\circ\deriv)_* \ev_{n+1} \realmap_n$ is the canonical quotient map $\Z[\Tree_{\pi_1M}(n)]\sra\Lie_{\pi_1M}(n)$. Note that this shows that both $\evrel_{n+1}$ and $\realmap_n$ are surjective.

Finally, to see that $\realmap_n$ vanishes on $(\AS)$ and $(\IHX)$ relations, we simply use that $\evrel_{n+1}$ is an isomorphism by the Convergence Theorem~\ref{thm:rpn}. Alternatively, see Remark~\ref{rem:graded-signs}.
\qed

\subsection{Proofs of the main theorem for the circle, and of the corollaries}\label{subsec:main-proof-circle}\label{subsec:proofs-cor}

For the embeddings of $\S^1$ we have the following fact which follows quickly from definitions; see for example \cite{KT-rpn}.
\begin{thm}
\label{thm:rpn-circle}
    There is a commutative diagram in which rows are fibration sequences
    \begin{equation}\label{eq:circle-to-arc}
    \begin{tikzcd}
        \Embp(\D^1,M\sm\D^d)\rar\dar & \Emb(\S^1,M)\rar\dar & \S M\arrow[equals]{d}
        \\
        T_n(\D^1,M\sm\D^d)\rar & T_n(\S^1,M)\rar & \S M.
    \end{tikzcd}
    \end{equation}
\end{thm}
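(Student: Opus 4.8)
The statement to prove is Theorem~\ref{thm:rpn-circle}, the commutative diagram of fibration sequences relating embeddings of the arc and the circle, and likewise their Taylor towers. Here is how I would approach it.

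\textbf{The plan.} The starting point is a geometric decomposition of the circle. Fix the small ball $\D^d \subseteq M$ containing $\u|_{[R_\infty, L_0]}$, as in Remark~\ref{rem:arc-to-circle}, and let $\nu$ be its boundary sphere, meeting $\u$ in two points. Restricting an embedding $\S^1 \hra M$ to the subarc $\u|_{[L_0,R_\infty]}$ (after a collar adjustment) and remembering the value of the $1$-jet at one of the boundary points of $\D^d$ gives the ``derivative'' map $\Emb(\S^1,M) \to \S M$ to the (fibrewise, or rather total) sphere bundle $\S M$ of unit tangent vectors, or more precisely the space of germs at a point; the fibre over a chosen such germ is precisely $\Embp(\D^1, M\sm\D^d)$, since fixing the boundary condition amounts to fixing where and how the arc enters and exits the removed ball. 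This gives the top row. The point is that this is a fibration: the map $\Emb(\S^1,M) \to \S M$ admits local sections by a standard isotopy-extension argument (parametrised isotopy extension theorem), so it is a fibre bundle, hence a fibration, and its fibre is by construction $\Embp(\D^1,M\sm\D^d)$ with the boundary condition induced by $\u$.

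\textbf{The bottom row and the vertical maps.} For the bottom row one uses that the Taylor tower $T_n(-,-)$ is functorial and compatible with this kind of decomposition. Concretely, $T_n(\S^1,M)$ is built (in whichever model one prefers — punctured knots, or the reduced punctured knots model of Section~\ref{subsec:tower} suitably adapted to the circle) from the same configuration/cosimplicial data, and restricting the relevant maps to the complement of $\D^d$ produces $T_n(\D^1,M\sm\D^d)$; the ``evaluation at the germ at a point of $\nu$'' still makes sense at each stage of the tower and assembles into a map $T_n(\S^1,M) \to \S M$ (note $\S M$ itself needs no approximation, being a space of germs of a single point, hence $T_n$ of it is just $\S M$, which is why the right-hand vertical map is the identity). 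That this second row is again a fibration sequence is the assertion that $T_n(-)$ preserves the homotopy fibre sequence of the top row; this follows because $T_n$ is built out of homotopy limits over punctured cubes / cosimplicial spaces, and these commute with the relevant homotopy pullbacks, so a fibration sequence of embedding spaces goes to a fibration sequence of $n$-th stages. The vertical maps are the evaluation maps $\ev_n$, which are natural in the manifold and source, so commutativity of the whole diagram is automatic once all four $\ev_n$'s (and the two identities $\S M = \S M$) are identified as the canonical ones.

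\textbf{The main obstacle.} The genuinely delicate point is the lower fibration sequence — specifically, that applying $T_n$ to the fibre sequence $\Embp(\D^1,M\sm\D^d) \to \Emb(\S^1,M) \to \S M$ yields a fibre sequence rather than merely a sequence with a nullhomotopic composite. This is where one must invoke that Goodwillie--Weiss approximation commutes with the homotopy-pullback construction presenting the sequence, which in turn rests on the fact that $\S M$ is already polynomial of degree $\leq n$ (it is $T_n$ of itself, being essentially a configuration space of one point) so that $T_n$ of the total space is the homotopy pullback of $T_n$ of the base and $T_n$ of the cofibre-restriction; this is precisely a ``$T_n$ preserves certain homotopy cartesian squares'' statement. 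Since the excerpt says this ``follows quickly from definitions'' and cites \cite{KT-rpn}, I would in the write-up give the geometric construction of all maps explicitly (restriction to $\u|_{[L_0,R_\infty]}$, evaluation of germs, the bundle structure via isotopy extension), verify commutativity by naturality of $\ev_n$, and then cite \cite{KT-rpn} for the assertion that the bottom row is a fibration sequence, sketching that it comes from $T_n$ commuting with the homotopy pullback defining the fibre, and that the identification of the fibre of $T_n(\S^1,M) \to \S M$ with $T_n(\D^1, M\sm\D^d)$ is the one induced by the same identification downstairs, compatibly with $\ev_n$.
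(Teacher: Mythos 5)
Your outline is correct and matches the intended argument: the paper itself gives no proof of Theorem~\ref{thm:rpn-circle} beyond remarking that it follows from the definitions and citing \cite{KT-rpn}, and your sketch (top row a fibre bundle via parametrised isotopy extension for the $1$-jet evaluation $\Emb(\S^1,M)\to\S M$, fibre identified with arcs in $M\sm\D^d$; bottom row from $\S M$ being already polynomial so the evaluation factors through the tower, with the fibre identification and the naturality of $\ev_n$ supplying commutativity) is exactly the standard route the citation covers. The only phrase to tighten in a write-up is the claim that ``$T_n$ sends fibration sequences to fibration sequences'' in general — it does not; what you actually use, and correctly state afterwards, is the polynomiality of the base $\S M$ together with the comparison of the punctured-knots models for $\S^1$ and for the arc in $M\sm\D^d$, which is precisely what \cite{KT-rpn} provides.
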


    It follows that there is a weak homotopy equivalence 
    \[
        \pF_{n+1}(\D^1,M\sm\D^d)\simeq\pF_{n+1}(\S^1,M).
    \]
    Therefore, the lowest nonvanishing homotopy group is again
    \[
        \pi_{n(d-3)}\pF_{n+1}(\S^1,M)\cong\Lie_{\pi_1M}(n)
    \]
    and we thus have a diagram
    \[\begin{tikzcd}[column sep=4pt]
        & \Lie_{\pi_1M}(n)
        \arrow[shift right]{dl}[swap]{\realmap_n^{\D^1}}
        \arrow[shift left]{dr}{\realmap_n^{\S^1}} 
        & \\
        \pi_{n(d-3)+1}\big(T_n(\D^1,M\sm\D^d),\Emb(\D^1,M\sm\D^d)\big)
        \arrow{rr}{\incl}\arrow[shift right]{ur}[swap,near end]{\ev^{\mathsf{rel},\D^1}_{n+1}} 
        && 
        \pi_{n(d-3)+1}\big(T_n(\S^1,M),\Emb(\S^1,M)\big)
        \arrow[shift left]{ul}[near end]{\ev^{\mathsf{rel},\S^1}_{n+1}}
    \end{tikzcd}
    \]
    The $\realmap$-triangle commutes by construction and Remark~\ref{rem:arc-to-circle}, and the $\ev$-triangle commutes by \eqref{eq:circle-to-arc}.
    Therefore, using that $\ev^{\mathsf{rel},\D^1}_{n+1}\circ\realmap_n^{\D^1}=\Id$ by Section~\ref{subsec:main-proof-arc}, we have 
    \[
    \ev^{\mathsf{rel},\S^1}_{n+1}
    \circ\realmap_n^{\S^1}(\Gamma^{g_{\ul{n}}})=
    \ev^{\mathsf{rel},\S^1}_{n+1}
    \circ\incl\circ\realmap_n^{\D^1}(\Gamma^{g_{\ul{n}}})=
    \ev^{\mathsf{rel},\D^1}_{n+1}
    \circ\realmap_n^{\D^1}(\Gamma^{g_{\ul{n}}})
    =\Gamma^{g_{\ul{n}}}.
    \]
This finishes the proof of Theorem~\ref{thm:main}.\qed


\begin{proof}[Proof of Corollary~\ref{cor:main}]
    We have a commutative diagram with exact rows
\[\begin{tikzcd}
    \ker\ev_n\arrow[tail]{r}\arrow[two heads,tail]{d}{\cong}[swap]{\evrel_{n+1}} & \pi_{n(d-3)}\Embp(C,M)\rar[two heads]{\ev_n}\arrow[two heads,tail]{d}{\cong}[swap]{\ev_{n+1}} & \pi_{n(d-3)}T_n(C,M)\arrow[equals]{d}\\
    \faktor{\Lie_{\pi_1M}(n)}{\im\delta_n}\arrow[tail]{r} & \pi_{n(d-3)}T_{n+1}(C,M)\rar{\p_{n+1}} & \pi_{n(d-3)}T_n(C,M).
\end{tikzcd}
\]
    The vertical arrow in the middle is an isomorphism by the Goodwillie--Klein--Weiss convergence theorem, recalled as Theorem~\ref{thm:rpn}, and the map 
    \[
        \delta_n\colon\pi_{n(d-3)+1}T_n(C,M)\to\pi_{n(d-3)+1}(T_{n+1}(C,M),T_n(C,M))\cong\Lie_{\pi_1M}(n)
    \]
    is the composition of the canonical map in the long exact sequence of the pair with the isomorphism $W(\chi\circ\deriv)_*$ from Theorem~\ref{thm:thesis}. Recall from Section~\ref{subsec:trees} that $\Lie_{\pi_1M}(n)\coloneqq\Z[\Tree_{\pi_1M}(n)]/\AS,\IHX$.
    
    On the other hand, by Theorem~\ref{thm:main} we have a commutative triangle
    \[\begin{tikzcd}
        & \ker\ev_n\arrow[two heads,tail]{d}{\evrel_{n+1}}\\
        \Z[\Tree_{\pi_1M}(n)]\arrow{ur}{\realmap_n}\arrow[two heads]{r}  & \faktor{\Z[\Tree_{\pi_1M}(n)]}{\AS,\IHX,\im\delta_n}
    \end{tikzcd}
    \]
    where the horizontal arrow is the quotient map. Therefore, for a linear combination $F$ of $\pi_1M$-decorated trees, the class $\realmap_n(F)\in\pi_{n(d-3)}\Embp(C,M)$ is trivial if and only if $F$ is in the linear span of $(\AS)$ and $(\IHX)$ relations and $\im \delta_n$.
\end{proof}

\begin{proof}[Proof of Corollary~\ref{cor:Dd}]
    This is immediate from the the collapse of the spectral sequence (so that $\im(\delta_n)=\im(d_1)=\stusq_{\mathsf{odd/even}}$) explained in Section~\ref{subsec:related-work}.
\end{proof}

\printbibliography[heading=bibintoc]

\vspace{10pt}
\hrule

\end{document}